\documentclass[11pt]{amsproc}

\usepackage{amssymb}

\usepackage{graphicx}
\usepackage{geometry}
\usepackage{amsmath}
\usepackage{tikz}
\usetikzlibrary{patterns,arrows.meta,calc}

\tikzset{
  cone/.style      = {draw=teal!75!black, line width=0.9pt},
  bandline/.style  = {draw=orange!90!yellow, line width=1.7pt},
  axisline/.style  = {draw=black, line width=0.7pt, dash pattern=on 2pt off 2pt},
  axissolid/.style = {draw=black, line width=0.7pt},
  axisdot/.style   = {draw=black, line width=0.7pt, line cap=round,
                      dash pattern=on 0.1pt off 2.2pt},
  edgeline/.style  = {draw=black, line width=1.1pt},
  edgedash/.style  = {draw=black, line width=1.1pt, dash pattern=on 2pt off 2pt},
  redline/.style   = {draw=red!65!black, line width=1.1pt},
  reddash/.style   = {draw=red!65!black, line width=1.1pt, dash pattern=on 2pt off 2pt},
  lbl/.style       = {font=\large},
  slbl/.style      = {font=\normalsize},
  blk/.style     = {draw=black, line width=1.2pt}, 
  red/.style     = {draw=red!70!black, line width=1.4pt},
  gold/.style    = {draw=orange!90!yellow, line width=1.5pt},
  tealarc/.style = {draw=cyan!70!teal, line width=1.5pt},
  bv/.style      = {circle, fill=black, inner sep=3.0pt},
  rv/.style      = {circle, fill=red!70!black, inner sep=3.0pt},
  rvbig/.style   = {circle, fill=red!70!black, inner sep=3.0pt},
  tv/.style      = {circle, draw=teal!75!black, fill=white, line width=0.8pt, inner sep=1.5pt},
  uv/.style      = {circle, fill=black, inner sep=2.4pt},
  urv/.style     = {circle, fill=red!70!black, inner sep=2.4pt},
  lbl/.style     = {font=\large},
  vlbl/.style    = {font=\normalsize}
}

\newcommand{\drawcone}[7]{%
  \pgfmathsetmacro{\Cx}{#1+#4*cos(#3)}%
  \pgfmathsetmacro{\Cy}{#2+#4*sin(#3)}%
  \pgfmathsetmacro{\Ux}{cos(#3)}%
  \pgfmathsetmacro{\Uy}{sin(#3)}%
  \pgfmathsetmacro{\Nx}{-sin(#3)}%
  \pgfmathsetmacro{\Ny}{cos(#3)}%
  \pgfmathsetmacro{\Rpx}{\Cx+#5*\Nx}\pgfmathsetmacro{\Rpy}{\Cy+#5*\Ny}%
  \pgfmathsetmacro{\Rqx}{\Cx-#5*\Nx}\pgfmathsetmacro{\Rqy}{\Cy-#5*\Ny}%
  \draw[cone] (#1,#2) -- (\Rpx,\Rpy);
  \draw[cone] (#1,#2) -- (\Rqx,\Rqy);
  \pgfmathsetmacro{\Sgn}{sin(#3)}%
  \ifdim\Sgn pt>0pt
    \draw[cone,#7] plot[domain=-90:90,samples=70,variable=\cp]
      ({\Cx+#6*cos(\cp)*\Ux+#5*sin(\cp)*\Nx},{\Cy+#6*cos(\cp)*\Uy+#5*sin(\cp)*\Ny});
    \draw[cone] plot[domain=90:270,samples=70,variable=\cp]
      ({\Cx+#6*cos(\cp)*\Ux+#5*sin(\cp)*\Nx},{\Cy+#6*cos(\cp)*\Uy+#5*sin(\cp)*\Ny});
  \else
    \draw[cone,#7] plot[domain=90:270,samples=70,variable=\cp]
      ({\Cx+#6*cos(\cp)*\Ux+#5*sin(\cp)*\Nx},{\Cy+#6*cos(\cp)*\Uy+#5*sin(\cp)*\Ny});
    \draw[cone] plot[domain=-90:90,samples=70,variable=\cp]
      ({\Cx+#6*cos(\cp)*\Ux+#5*sin(\cp)*\Nx},{\Cy+#6*cos(\cp)*\Uy+#5*sin(\cp)*\Ny});
  \fi
}

\newcommand{\drawconeT}[7]{%
  \pgfmathsetmacro{\Ux}{cos(#3)}\pgfmathsetmacro{\Uy}{sin(#3)}%
  \pgfmathsetmacro{\Nx}{-sin(#3)}\pgfmathsetmacro{\Ny}{cos(#3)}%
  \pgfmathsetmacro{\Cx}{#1+#4*\Ux}\pgfmathsetmacro{\Cy}{#2+#4*\Uy}%
  \pgfmathsetmacro{\Ecc}{#6/#4}%
  \pgfmathsetmacro{\Xz}{#4-#6*#6/#4}%
  \pgfmathsetmacro{\Yz}{#5*sqrt(1-\Ecc*\Ecc)}%
  \draw[cone] (#1,#2) -- ({#1+\Xz*\Ux+\Yz*\Nx},{#2+\Xz*\Uy+\Yz*\Ny});
  \draw[cone] (#1,#2) -- ({#1+\Xz*\Ux-\Yz*\Nx},{#2+\Xz*\Uy-\Yz*\Ny});
  \draw[cone,#7] plot[domain=90:270,samples=90,variable=\cp]
    ({\Cx+#6*cos(\cp)*\Ux+#5*sin(\cp)*\Nx},{\Cy+#6*cos(\cp)*\Uy+#5*sin(\cp)*\Ny});
  \draw[cone] plot[domain=-90:90,samples=90,variable=\cp]
    ({\Cx+#6*cos(\cp)*\Ux+#5*sin(\cp)*\Nx},{\Cy+#6*cos(\cp)*\Uy+#5*sin(\cp)*\Ny});
}

\newcommand{\coneaxisup}[5]{%
  \pgfmathsetmacro{\La}{#4-#5}%
  \pgfmathsetmacro{\Ux}{cos(#3)}\pgfmathsetmacro{\Uy}{sin(#3)}%
  \draw[axissolid] (#1,#2) -- ({#1+0.6667*\La*\Ux},{#2+0.6667*\La*\Uy});
  \draw[axisline] ({#1+0.6667*\La*\Ux},{#2+0.6667*\La*\Uy})
               -- ({#1+\La*\Ux},{#2+\La*\Uy});
}

\newcommand{\coneray}[7]{%
  \pgfmathsetmacro{\Ux}{cos(#3)}\pgfmathsetmacro{\Uy}{sin(#3)}%
  \pgfmathsetmacro{\Nx}{-sin(#3)}\pgfmathsetmacro{\Ny}{cos(#3)}%
  \pgfmathsetmacro{\Tx}{#1+#4*\Ux+#6*cos(#7)*\Ux+#5*sin(#7)*\Nx}%
  \pgfmathsetmacro{\Ty}{#2+#4*\Uy+#6*cos(#7)*\Uy+#5*sin(#7)*\Ny}%
  \draw[axissolid] (#1,#2) -- ({#1+0.6667*(\Tx-#1)},{#2+0.6667*(\Ty-#2)});
  \draw[axisline] ({#1+0.6667*(\Tx-#1)},{#2+0.6667*(\Ty-#2)}) -- (\Tx,\Ty);
}

\newcommand{\coneaxisdown}[7]{%
  \pgfmathsetmacro{\Ux}{cos(#3)}\pgfmathsetmacro{\Uy}{sin(#3)}%
  \draw[axisdot]   (#1,#2) -- ({#1+#7*\Ux},{#2+#7*\Uy});
  \draw[axissolid] ({#1+#7*\Ux},{#2+#7*\Uy}) -- ({#1+#4*\Ux},{#2+#4*\Uy});
  \draw[axisline]  ({#1+#4*\Ux},{#2+#4*\Uy})
                -- ({#1+(#4+#6)*\Ux},{#2+(#4+#6)*\Uy});
}

\newcommand{\narrowgoldband}{%
  \pattern[pattern=finegrid, pattern color=teal!75!black, opacity=0.6]
    plot[domain=-2.0:2.0,samples=90,variable=\xp] ({\xp},{0.9875-0.055*\xp*\xp})
    -- plot[domain=2.0:-2.0,samples=90,variable=\xp] ({\xp},{0.1625-0.055*\xp*\xp})
    -- cycle;
}

\newcommand{\maskcone}[6]{%
  \pgfmathsetmacro{\Cx}{#1+#4*cos(#3)}%
  \pgfmathsetmacro{\Ux}{cos(#3)}%
  \pgfmathsetmacro{\Uy}{sin(#3)}%
  \pgfmathsetmacro{\Nx}{-sin(#3)}%
  \pgfmathsetmacro{\Ny}{cos(#3)}%
  \pgfmathsetmacro{\Cy}{#2+#4*sin(#3)}%
  \pgfmathsetmacro{\Sgn}{sin(#3)}%
  \ifdim\Sgn pt>0pt
    \fill[white] (#1,#2) -- plot[domain=-90:90,samples=70,variable=\cp]
      ({\Cx+#6*cos(\cp)*\Ux+#5*sin(\cp)*\Nx},{\Cy+#6*cos(\cp)*\Uy+#5*sin(\cp)*\Ny}) -- cycle;
  \else
    \fill[white] (#1,#2) -- plot[domain=90:270,samples=70,variable=\cp]
      ({\Cx+#6*cos(\cp)*\Ux+#5*sin(\cp)*\Nx},{\Cy+#6*cos(\cp)*\Uy+#5*sin(\cp)*\Ny}) -- cycle;
  \fi
}

\newcommand{\narrowgoldbanddeep}{%
  \pattern[pattern=finegrid, pattern color=teal!75!black, opacity=0.6]
    plot[domain=-2.0:2.0,samples=90,variable=\xp] ({\xp},{1.1971-0.055*\xp*\xp})
    -- plot[domain=2.0:-2.0,samples=90,variable=\xp] ({\xp},{0.0965-0.1436-0.055*\xp*\xp})
    -- cycle;
}

\newcommand{\stubto}[4]{%
  \pgfmathsetmacro{\ux}{cos(#3)}\pgfmathsetmacro{\uy}{sin(#3)}%
  \pgfmathsetmacro{\tt}{(#4-0.055*(#1)*(#1)-(#2))/(\uy+0.11*(#1)*\ux)}%
  \pgfmathsetmacro{\tt}{\tt-((#2)+\uy*\tt-(#4)+0.055*((#1)+\ux*\tt)*((#1)+\ux*\tt))
                            /(\uy+0.11*((#1)+\ux*\tt)*\ux)}%
  \pgfmathsetmacro{\tt}{\tt-((#2)+\uy*\tt-(#4)+0.055*((#1)+\ux*\tt)*((#1)+\ux*\tt))
                            /(\uy+0.11*((#1)+\ux*\tt)*\ux)}%
  \pgfmathsetmacro{\Ex}{(#1)+\ux*\tt}\pgfmathsetmacro{\Ey}{(#2)+\uy*\tt}%
  \draw[edgeline] (#1,#2) -- ({(#1)+0.6667*(\Ex-(#1))},{(#2)+0.6667*(\Ey-(#2))});
  \draw[edgedash] ({(#1)+0.6667*(\Ex-(#1))},{(#2)+0.6667*(\Ey-(#2))}) -- (\Ex,\Ey);
}

\newcommand{\blackframe}[1]{%
  \pgfmathsetmacro{\Tx}{-0.58+#1}\pgfmathsetmacro{\Bx}{-0.62+#1}%
  \stubto{\Tx}{0.88}{118}{1.1971}
  \stubto{\Tx}{0.88}{62}{1.1971}
  \stubto{\Bx}{0.28}{242}{-0.0471}
  \stubto{\Bx}{0.28}{270}{-0.0471}
  \stubto{\Bx}{0.28}{298}{-0.0471}
  \fill (\Tx,0.88) circle (2.3pt);
  \fill (\Bx,0.28) circle (2.3pt);
}

\newcommand{\coneslice}[5]{%
  \draw[redline] plot[domain=180:360,samples=90,variable=\cp]
    ({#1+#4*cos(\cp)*cos(#3)-#5*sin(\cp)*sin(#3)},
     {#2+#4*cos(\cp)*sin(#3)+#5*sin(\cp)*cos(#3)});
  \draw[reddash] plot[domain=0:180,samples=90,variable=\cp]
    ({#1+#4*cos(\cp)*cos(#3)-#5*sin(\cp)*sin(#3)},
     {#2+#4*cos(\cp)*sin(#3)+#5*sin(\cp)*cos(#3)});
}

\pgfdeclarepatternformonly{finegrid}{\pgfqpoint{-1pt}{-1pt}}{\pgfqpoint{4pt}{4pt}}{\pgfqpoint{3pt}{3pt}}%
{%
  \pgfsetlinewidth{0.15pt}%
  \pgfpathmoveto{\pgfqpoint{0pt}{0pt}}%
  \pgfpathlineto{\pgfqpoint{0pt}{3.1pt}}%
  \pgfpathmoveto{\pgfqpoint{0pt}{0pt}}%
  \pgfpathlineto{\pgfqpoint{3.1pt}{0pt}}%
  \pgfusepath{stroke}%
}

\usepackage[colorlinks=true,allcolors=blue]{hyperref}
\usepackage[shortlabels]{enumitem}

\usepackage[dvipsnames]{xcolor}

\usepackage[only,sslash,bbslash]{stmaryrd}%

\DeclareFontFamily{U}  {MnSymbolC}{}
\DeclareSymbolFont{MnSyC}         {U}  {MnSymbolC}{m}{n}
\SetSymbolFont{MnSyC}       {bold}{U}  {MnSymbolC}{b}{n}
\DeclareFontShape{U}{MnSymbolC}{m}{n}{
   <-6>  MnSymbolC5
  <6-7>  MnSymbolC6
  <7-8>  MnSymbolC7
  <8-9>  MnSymbolC8
  <9-10> MnSymbolC9
 <10-12> MnSymbolC10
 <12->   MnSymbolC12}{}
\DeclareFontShape{U}{MnSymbolC}{b}{n}{
   <-6>  MnSymbolC-Bold5
  <6-7>  MnSymbolC-Bold6
  <7-8>  MnSymbolC-Bold7
  <8-9>  MnSymbolC-Bold8
  <9-10> MnSymbolC-Bold9
 <10-12> MnSymbolC-Bold10
 <12->   MnSymbolC-Bold12}{}
\DeclareMathSymbol{\mnbowtie}{\mathbin}{MnSyC}{38}
\DeclareMathSymbol{\mnvertbowtie}{\mathbin}{MnSyC}{39}

\newtheorem{theorem}{Theorem}[section]
\newtheorem{lemma}[theorem]{Lemma}

\newtheorem{corollary}[theorem]{Corollary}

\theoremstyle{definition}
\newtheorem{definition}[theorem]{Definition}

\newtheorem{observation}[theorem]{Observation}

\theoremstyle{remark}

\numberwithin{equation}{section}

\long\def\ignore#1{}

\def\siran#1{\v{S}ir\'{a}\v{n}}
\def\skoviera#1{\v{S}koviera}
\def\fijavz#1{Fijav\v{z}}
\def\bollobas#1{Bollob\'{a}s}
\def\mobius#1{M\"{o}bius}

\def\mR{\mathbb{R}}

\let\Ga\Gamma
\let\Si\Sigma
\let\Om\Omega
\let\al\alpha
\let\be\beta
\let\ga\gamma
\let\de\delta

\def\ir{^\circ}

\let\sb\subseteq
\def\eg{{\fam1 eg}}
\let\ti\widetilde
\let\bdy\partial

\def\iv{^{-1}}
\def\mapright#1{
	\;\smash{\mathop{\longrightarrow}\limits^{#1}}\;}

\def\pdu{{*}}
\def\ppe{{\times}}
\def\pwi{{\odot}}
\def\du{{}^*}
\def\pe{{}^\times}
\def\wi{{}^\odot}

\def\ct{{/}}
\def\dt{{\backslash}}
\def\tc{\mskip-2mu\rightthreetimes\mskip-2mu}

\def\er{\ominus}

\def\pn{\mskip-2mu\mnbowtie\mskip-2mu}
\def\dpn{\mnvertbowtie}

\def\sct{\ct\mskip-4mu\ct}
\def\sdt{\dt\mskip-4mu\dt}
\def\sw{\mskip-2mu\sim\mskip-2mu}

\def\ms#1{\mskip#1mu}

\def\ca{\mathsf{a}}
\def\cv{\mathsf{v}}
\def\cf{\mathsf{f}}
\def\cz{\mathsf{z}}
\def\cvv{\cv^+}
\def\cff{\cf^+}

\def\xp#1{\par\global\leftskip=#1\parindent
	\noindent\hskip-\parindent\ignorespaces}
\def\pcap#1#2{{\llap{\hbox to #1\parindent{#2\hss}}}\ignorespaces}
\begin{document}

\let\reallabel\label

\title[Embeddings in pseudosurfaces]{Duality and minors for embeddings of graphs in pseudosurfaces}


\author[Dunshee]{Blake Dunshee}
\address{Department of Mathematics and Computer Science, Belmont
University, 1900 Belmont Boulevard, Nashville, Tennessee 37212}
\curraddr{}
\email{blake.dunshee@belmont.edu}
\thanks{}

\author[Ellingham]{M.~N.~Ellingham}
\address{Department of Mathematics, 1326 Stevenson Center, Vanderbilt
University, Nashville, Tennessee 37240}
\curraddr{}
\email{mark.ellingham@vanderbilt.edu}
\thanks{}

\author[Ellis-Monaghan]{Joanna A.~Ellis-Monaghan}
\address{Korteweg-de Vries Institute for Mathematics, University of
Amsterdam, Science Park 105-107, 1098 XH Amsterdam, the Netherlands}
\curraddr{}
\email{jellismonaghan@gmail.com}
\urladdr{https://sites.google.com/site/joellismonaghan/}
\thanks{}

\subjclass[2020]{05C10, 57M15}
\keywords{Pseudosurface, duality, minor}

\dedicatory{Date: 17 September 2026}
\date{}

\begin{abstract}
Cellular embeddings of graphs in surfaces have well-defined duality and minor (edge contraction and deletion) operations that interact in a natural way.
A \emph{pseudosurface} is obtained from a surface (compact $2$-manifold) by a finite number of identifications of finite sets of points.
Points that are created by the identifications do not have a neighborhood homeomorphic to an open disk and are known as \emph{pinchpoints}.
Embeddings of graphs in pseudosurfaces have been considered, both implicitly and explicitly, since the 1960s.
Usually the condition that all pinchpoints correspond to vertices of the graph is imposed.
However, this makes it difficult to define duality and minors for pseudosurface embeddings and have these operations interact in the expected way.

In 1991 Deneen, Shute, and Thomborson (DST) proposed two models for duality of pseudosurface embeddings, 
the more general of which was rediscovered and described in two ways by Huggett and Moffatt (HM) in 2020.  Huggett and Moffatt also implemented minor operations in this framework.
Both the more general DST model and the two HM models involved extending a class of graph embeddings in pseudosurfaces to a larger class of somewhat abstract objects.
We define the class of \emph{pseudocellular} embeddings of graphs in pseudosurfaces,  which allow pinchpoints at places other than vertices, in particular in the middle of faces or edges.
The DST/HM objects are represented by a subclass of pseudocellular embeddings called \emph{quasicellular embeddings}, giving them a simple topological interpretation.

Pseudocellular embeddings also generalize other structures, including the \emph{edge-point ribbon graphs} of Ellis-Monaghan, Kauffman, and Moffatt, and \emph{cyclically ordered graphs} or \emph{cogs} (also known as \emph{rigid-vertex graphs}).
Duality and minor operations for pseudocellular embeddings have very simple and straightforward definitions using topological quotient operations.
Pseudocellular embeddings of edgeless graphs have nontrivial structure, and we define some minor operations for those that are related to `$t$-minor' operations on bipartite graphs.
We develop a family of polynomial invariants for pseudocellular embeddings and discuss connections to other polynomial invariants.

\end{abstract}

\maketitle

\section{Introduction}\label{sec:intro}

While embeddings of graphs in surfaces have arguably been studied since the ancient Greeks considered the Platonic polyhedra, embeddings of graphs in pseudosurfaces seem to have only been studied since the 1960s.
By a pseudosurface we will mean a topological space obtained from a (not necessarily connected) compact surface by identifying together finitely many finite sets of points.  The new points are called pinchpoints, and the pinchdegree of such a point is the number of corresponding points in the original surface.
Formal definitions in terms of quotient spaces are given in Section \ref{sec:topology}.

Embeddings of graphs in (compact) surfaces have a very nice theory involving duality and minor operations such as deletion and contraction of edges. This has been extended over the past twenty years or so to incorporate ideas such as Chmutov's partial duality \cite{Chm09} and the more general twisted duality framework by one of us (J.E.-M.) and Moffatt \cite{EM12}.  
This theory generally applies to cellular embeddings, where all faces are homeomorphic to open disks.
It is desirable to extend these ideas, as far as possible, to a natural class of embeddings of graphs in pseudosurfaces.

Our main objective in this paper is to define a new class of embeddings of graphs in pseudosurfaces known as pseudocellular embeddings, for which duality and minor operations can be defined in very natural ways.
We first define a smaller class of embeddings, quasicellular embeddings, which provide a simple topological interpretation of some more complicated models of duality and minors that already appear in the literature \cite{DST91,HM20}.
The difficulty with many previous approaches to duality for graph embeddings in pseudosurfaces is that they require pinchpoints to be occupied by vertices of the graph.
Quasicellular embeddings allow each face to contain one pinchpoint, and pseudocellular embeddings additionally allow the interior of each edge to contain one pinchpoint.
The dual of a pseudocellular embedding is embedded in the same pseudosurface as the primal embedding.
Quasicellular and pseudocellular embeddings are defined in Subsections \ref{ss:quasicellular} and \ref{ss:pseudocellular}, respectively.

We develop some background before defining pseudocellular embeddings.
In Section \ref{sec:topology} we provide a firm topological foundation for the minor and other operations we define for pseudocellular embeddings.
In Section \ref{sec:graphemb} we summarize some standard approaches to duality, minors, and related topics for graphs embedded in surfaces.
To put our main objective into perspective, we give a detailed discussion of previous models of duality and minors for graphs embedded in pseudosurfaces in Section \ref{sec:embpseudosurf}.
Our minor operations are based on topological quotienting, and in Subsection \ref{ss:qbr} (and in later parts of the paper) we contrast these Q (quotient-based) minor operations with the usual BR (\bollobas.-Riordan) minor operations for graphs embedded in surfaces \cite{BR02}, discussed in Subsection \ref{ss:minorsurf}.  We give criteria for which type of operation, Q or BR, should be applied in general settings.

After defining pseudocellular embeddings we explore some consequences.
In Subsection \ref{ss:subclass} we discuss subclasses of pseudocellular embeddings, some of which correspond to some other objects in the literature, including edge-point ribbon graphs \cite{EKM18} and cyclically ordered graphs, also known as cogs or rigid-vertex graphs (see for example \cite{BEEMM25plus,BDJSV15,EM13,Kau89}).
We consider additional operations beyond edge contraction and deletion in Section \ref{sec:addop}, including partial Petrie and Wilson duality, operations to delete and contract vertices and faces, and additional operations that eliminate edges.  (We use \emph{elimination} as a general term for operations that get rid of a vertex, edge, or face in some way.)
In Section \ref{sec:poly} we define a family of polynomial invariants for pseudocellular embeddings, and discuss connections to other polynomial invariants.
Finally, in Section \ref{sec:conclusion} we provide a table summarizing the operations we have defined for pseudocellular embeddings, and mention some possible further directions suggested by our framework.

\section{Topological background}\label{sec:topology}

In this section we discuss the topology of surfaces and pseudosurfaces, proving some results we will need later for defining various operations on embeddings of graphs in pseudosurfaces.
Previous papers on embeddings of graphs in pseudosurfaces have usually taken a fairly informal approach, and avoided detailed discussion of topological considerations.  Here we try to provide a firmer topological foundation, although we do not claim to be completely rigorous.  
We will later rely on a paper of Youngs \cite{You63} for some important topological ideas, such as cellularization.
This section culminates in Theorem \ref{curvecontract}, which we will use to show that our operations preserve the fact that we have a pseudosurface.

\subsection{Quotient spaces in topology}\label{ss:quotient}
We will use the quotient operation for topological spaces in several ways in this paper.  Pseudosurfaces will be defined as quotients of surfaces.    We will represent graphs topologically using $1$-complexes, which are quotient spaces, and we will also define minor operations for embeddings using quotients.

We assume the reader is familiar with basic topological concepts.
We represent a topological space as an ordered pair $(X, \top_X)$, where $X$ is a set of points and $\top_X$ is a topology for $X$, i.e., the collection of open sets.  A \emph{neighborhood} of $x \in X$ (or $S \sb X$) is a set containing an open set that includes $x$ (or $S$).  The interior of $X$ is denoted $X\ir$.  Continuity of a function $f: X \to Y$ between two topological spaces can be defined pointwise at each $x \in X$: $f$ is continuous at $x$ if for each neighborhood $B$ of $f(x)$, $f\iv(B)$ is a neighborhood of $x$.
Then $f$ is continuous if and only if it is continuous at each $x \in X$.  By a \emph{component} of $X$ we always mean a connected component.

The fundamental concept of a quotient topology can be approached from two slightly different perspectives.
First, suppose topological spaces $(X, \top_X)$ and $(Y, \top_Y)$ are given.  A \emph{quotient map} from $(X, \top_X)$ to $(Y, \top_Y)$ is a function $q : X \to Y$ such that $q$ is a surjection, and $B \sb Y$ is open in $\top_Y$ if and only if $q\iv(B)$ is open in $\top_X$.
A quotient map is always continuous.
Second, suppose $(X, \top_X)$ and a surjection $q: X \to Y$ are given, where $Y$ is any set.
Then there is a unique topology on $Y$ that makes $q$ a quotient map, which we call the \emph{quotient topology induced by $q$}, denoted $\top_X/q$.  The open sets in $\top_X/q$ are precisely those $B \sb Y$ for which $q\iv(B)$ is open in $\top_X$.

Given a function $f : X \to Y$, the \emph{equivalence relation induced by $f$} is the relation $=_f$ on $X$, where $x =_f x'$ if and only if $f(x) = f(x')$.
A point $x \in X$ is a \emph{solo input} of $f$, and $f(x)$ is a \emph{solo output} of $f$, if $x$ is equivalent only to itself under $=_f$, \emph{i.e.}, $f(x') \ne f(x)$ for all $x' \ne x$.
Otherwise $x$ is a \emph{merging input} of $f$, and $f(x)$ is a \emph{merged output}.

Two fundamental properties of quotient spaces are as follows.

\begin{lemma}\label{qp}
Let $(X, \top_X)$ be a topological space, and suppose there are surjections
$$ X = Y_0 \mapright{q_1} Y_1
	\mapright{q_2} Y_2 \;\cdots\; Y_{k-1}
	\mapright{q_k} Y_k = Y.$$
\begin{enumerate}[label=\textup{(\arabic*)},itemsep=2pt]
\item
Then the topologies $\top_X/q_1/q_2/\dots/q_k$ and $\top_X/(q_k \circ \dots \circ q_2 \circ q_1)$ on $Y$ are identical.

\item Suppose there is another sequence of surjections
$$ X = Z_0 \mapright{r_1} Z_1
	\mapright{r_2} Z_2 \;\cdots\; Z_{\ell-1}
	\mapright{r_\ell} Z_\ell = Z,$$
where $q = q_k \circ \dots \circ q_2 \circ q_1$
and $r = r_\ell \circ \dots \circ r_2 \circ r_1$ induce the same equivalence relation on $X$.
Then there is a homeomorphism $h$ from $(Y, \top_X/q)$ to $(Z, \top_X/r)$, such that $r = h \circ q$, or equivalently $q\iv(y) = r\iv(h(y))$ for all $y \in Y$.
\end{enumerate}
\end{lemma}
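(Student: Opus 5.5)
For part (1), I will argue by induction on $k$, so the heart of the matter is the case $k=2$. The case $k=1$ is trivial. For $k=2$, I will show that $B\sb Y_2$ is open in $\top_X/q_1/q_2$ if and only if $(q_2\circ q_1)\iv(B)$ is open in $\top_X$. By definition, $B$ is open in $\top_X/q_1/q_2$ iff $q_2\iv(B)$ is open in $\top_X/q_1$. That in turn holds iff $q_1\iv(q_2\iv(B))$ is open in $\top_X$. Since $q_1\iv(q_2\iv(B)) = (q_2\circ q_1)\iv(B)$, this is exactly the defining condition for $B$ to be open in $\top_X/(q_2\circ q_1)$.

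For general $k$, I will apply the induction hypothesis to the first $k-1$ maps, together with the identity $\top_X/q_1/\dots/q_k = (\top_X/q_1/\dots/q_{k-1})/q_k$. The $k=2$ argument then applies with $q_k\circ\dots\circ q_1 = q_k\circ(q_{k-1}\circ\dots\circ q_1)$. Surjectivity of each composite is clear, so all quotient topologies involved are well-defined.

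For part (2), part (1) lets me replace each chain by the single quotient maps $q: X\to Y$ and $r: X\to Z$. I will define $h: Y\to Z$ by $h(y) = r(x)$ for any $x\in q\iv(y)$. Such an $x$ exists because $q$ is surjective. The value is independent of the choice of $x$ because $x=_q x'$ implies $x=_r x'$. Then $r = h\circ q$ by construction. Symmetrically, I obtain $g: Z\to Y$ with $q = g\circ r$. Hence $g\circ h\circ q = q$, and surjectivity of $q$ gives $g\circ h=\mathrm{id}_Y$; likewise $h\circ g = \mathrm{id}_Z$. So $h$ is a bijection.

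For continuity of $h$, take $C\sb Z$ open. Then $q\iv(h\iv(C)) = r\iv(C)$ is open in $\top_X$ because $r$ is continuous. By the definition of the quotient topology, $h\iv(C)$ is therefore open in $\top_X/q$. The same argument shows $g=h\iv$ is continuous, so $h$ is a homeomorphism.

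The equivalent formulation follows directly. From $r = h\circ q$ and $h$ bijective, $x\in q\iv(y)$ iff $r(x)=h(y)$, that is, iff $x\in r\iv(h(y))$.

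There is no genuine obstacle here. The only points needing care are checking that $h$ is well-defined and using surjectivity to cancel $q$ in $g\circ h\circ q = q$.
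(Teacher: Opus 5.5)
Your proof is correct and takes the same route as the paper. For part (1) the paper cites the fact that a composition of quotient maps is a quotient map, together with uniqueness of the quotient topology, and for part (2) it cites uniqueness of quotient spaces (Lee, Theorem~3.75); you simply prove these standard facts directly.
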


\begin{proof}
Part (1) follows from the fact that the composition of two quotient maps is a quotient map (see, e.g., \cite[p.~35]{ST80} or \cite[Proposition~3.62]{Lee11}) and the uniqueness of the quotient topology.  For part (2) see, e.g., \cite[Theorem~3.75]{Lee11}.
\end{proof}

We are interested in situations where two sets $X$ and $Y$ are `mostly the same' in some sense, and it will simplify our exposition if we can regard most points in $X$ as also being present in $Y$.
Therefore, we define an \emph{identification map} from $X$ to $Y$ as a surjection $q : X \to Y$ where $q$ acts as the identity map on the set $X_1$ of solo inputs of $q$.
So the set of solo outputs of $q$ is also $X_1$, and $q$ combines the merging inputs in $X_2=X-X_1$ into the merged outputs in $Y_2=Y-X_1$.
When we talk about `identifying points' below, formally we mean that there is an identification map.
Since an identification map $q : X \to Y$ is a surjection, it gives a quotient topology $\top_X/q$ on $Y$ for a given topology $\top_X$ on $X$.  We say an identification map is \emph{finite} if the set of merging inputs (and hence the set of merged outputs) is finite.

So the consequences of Lemma \ref{qp} are as follows.  If we form a new topological space $(Y, \top_Y)$ by identifying points in $(X, \top_X)$, we can break the identification process up into steps and determine the changes in the topology at each step.  We can change the order in which we identify points, as long as the overall identification maps induce the same equivalence relation on $X$.
The final topological space we arrive at will be unique up to renaming the merged outputs in $Y$.

We frequently use this with a `lifting' step.  For example, suppose we have identification maps $q_1: X \to Y$ and $q_2: Y \to Z$, where $X$ is a space with simple structure (e.g., a surface), $Y$ is a more complicated space (e.g., a pseudosurface), and we wish to describe the space $Z=Y/q_2$.  We can do this if we can find two other identification maps $r_1: X \to Y'$ and $r_2 : Y' \to Z$ where the effects of $r_1$ and $r_2$ are easy to describe and $r_2 \circ r_1 = q_2 \circ q_1$, because $Y/q_2 = (X/r_1)/r_2$.  So we lift from $Y$ to $X$ in order to compute the quotient space $Y/q_2$.

\begin{observation}[Closed inputs, finite outputs]\label{cifo}
In most cases where we will apply an identification map from $X$ to $Y$, the set of merged outputs will be finite, and the preimage of each merged output will be a closed set.
We discuss how to determine the topology on $Y$ in this case.

A topology $\top_X$ on a set $X$ can be described by giving an \emph{open neighborhood base} at every $x \in X$, namely a collection of open sets $\mathcal{B}_x$ such that every open set $S$ containing $x$ has an element of $\mathcal{B}_x$ as a subset.  The elements of $\mathcal{B}_x$ can be chosen to be `small'; in particular, we can always replace each $\mathcal{B}_x$ by the subset $\{B \in \mathcal{B}_x \;|\; B \sb A_x\}$ where $A_x$ is an arbitrary open set containing $x$.

Let $q : X \to Y$ be an identification map, where $X$ has topology $\top_X$.  Let $X_1$, $X_2 = X-X_1$, and $Y_2=Y-X_1$ be the sets of solo inputs (and also solo outputs), merging inputs, and merged outputs, respectively.  Suppose that $X_2$ is closed.
Then $X_1$ is open, so each $x \in X_1$ has an open neighborhood base in $\top_X$ consisting only of subsets of $X_1$; this is also an open neighborhood base for $x$ in $\top_Y = \top_X/q$.
So in some sense the topology does not change around these points.
Thus, to determine $\top_Y$ we need only determine an open neighborhood base for each $y \in Y_2$.

Suppose more specifically that $Y_2$ is finite and $q\iv(z)$ is closed for each $z \in Y_2$, so that $X_2 = \bigcup_{z \in Y_2} q\iv(z)$ is closed.
If $y \in Y_2$, then $X_2 - q\iv(y) = \bigcup_{z \in Y_2, z \ne y} q\iv(z)$ will also be closed, so its complement $X_1 \cup q\iv(y)$ will be open, and hence we can find an open neighborhood base for $y$ by taking the image under $q$ of `small' open sets $B$ in $X$ satisfying $q\iv(y) \sb B \sb X_1 \cup q\iv(y)$.
In particular, we can avoid the elements of $X_2 - q\iv(y)$ in determining the topology around $y$.
\end{observation}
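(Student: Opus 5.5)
The plan is to verify the two claims of Observation \ref{cifo} directly from the definition of $\top_X/q$ as in Subsection \ref{ss:quotient}. Everything rests on one elementary fact. Call $B \sb X$ \emph{saturated} if $q\iv(q(B)) = B$. If $B$ is open and saturated, then $q(B)$ is open in $\top_Y$, because its preimage is $B$. I will first record when our sets are saturated. Since $q$ is the identity on $X_1$ and the solo outputs are exactly $X_1$, every subset of $X_1$ is saturated. Likewise, any set $B$ with $q\iv(y) \sb B \sb X_1 \cup q\iv(y)$ is saturated, since $q(B) = \{y\} \cup (B - q\iv(y))$ and the preimage of this set is $q\iv(y) \cup (B \cap X_1) = B$.

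For the first claim, assume $X_2$ is closed, so $X_1$ is open. Take $x \in X_1$ and let $\mathcal{B}_x$ be an open neighborhood base at $x$ in $\top_X$ consisting of subsets of $X_1$. Such a base exists by replacing each member with its intersection with $A_x = X_1$, as noted in the observation. Each $B \in \mathcal{B}_x$ is open and saturated, so $q(B) = B$ is open in $\top_Y$. Now let $U$ be open in $\top_Y$ with $x \in U$. Then $q\iv(U)$ is open in $X$ and contains $x$, so some $B \in \mathcal{B}_x$ satisfies $B \sb q\iv(U)$. Hence $B = q(B) \sb U$. So $\mathcal{B}_x$ is also an open neighborhood base at $x$ in $\top_Y$.

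For the second claim, assume $Y_2$ is finite and each $q\iv(z)$ is closed. Then $X_2 - q\iv(y)$ is a finite union of closed sets, so $O_y = X_1 \cup q\iv(y)$ is open. Let $\mathcal{C}_y$ be the family of open sets $B$ with $q\iv(y) \sb B \sb O_y$. By the saturation remark, each $q(B)$ with $B \in \mathcal{C}_y$ is open in $\top_Y$ and contains $y$. Conversely, let $U$ be open in $\top_Y$ with $y \in U$. Then $B = q\iv(U) \cap O_y$ is open, lies in $\mathcal{C}_y$, and satisfies $q(B) \sb U$. So $\{q(B) : B \in \mathcal{C}_y\}$ is an open neighborhood base at $y$.

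Smallness is handled the same way. Any subfamily of $\mathcal{C}_y$ that is cofinal among open neighborhoods of $q\iv(y)$ also works: every open $W \supseteq q\iv(y)$ must contain a member of the subfamily, and we apply this with $W = q\iv(U) \cap O_y$. The only point needing care is the saturation check, which is where $B \sb X_1 \cup q\iv(y)$ is used. It ensures that $B$ contains no partial fibre of another merged output, so $q(B)$ really is open. I expect no real obstacle beyond that check.
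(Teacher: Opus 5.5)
Your proposal is correct and takes the same route as the argument written into the observation itself: $X_1$ is open (and, using finiteness of $Y_2$ and closed fibres, so is $X_1 \cup q^{-1}(y)$), and images of open sets lying in $X_1$, or between $q^{-1}(y)$ and $X_1 \cup q^{-1}(y)$, form the required neighborhood bases in $\top_X/q$. You make explicit two steps the paper leaves implicit: the saturation check showing these images are open, and the converse step via $q^{-1}(U) \cap (X_1 \cup q^{-1}(y))$.
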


One special type of identification map is where we identify a set $A \sb X$ to a single merged output $a$.
We call this \emph{crushing} $A$ to $a$.
An identification map is finite if and only if it is a composition of finite crushings.

\subsection{Curves}\label{ss:curve}
A \emph{curve} in a topological space $X$ is a continuous map $\ga : J \to X$ where $J \sb \mR$ is an interval with more than one element.
We say $\ga$ is \emph{simple} if it is injective.
By a \emph{simple path} we mean a simple curve whose domain is an interval $[a,b]$ (we are using `path' here in its topological sense).
It is well known that a simple path in a Hausdorff space is a homeomorphism from its domain to its range.
A curve $\ga : [a,b] \to X$ is \emph{closed} if $\ga(a) = \ga(b)$, and in this case it is a \emph{simple closed curve} if it is injective on $[a,b)$ (or, equivalently, on $(a,b]$).

Although a closed curve is sometimes defined as a continuous injective map from a circle, closed curves for us have a definite beginning and end.  We can change the beginning and end as follows.
A \emph{cyclic reparameterization} of a closed curve $\ga:[a,b] \to X$ is a curve $\ga_c: [a,b] \to X$ where $c \in [0, b-a)$, $\ga_c(x) = \ga(x+c)$ for $x \in [a, b-c]$, and $\ga_c(x) = \ga(x+a-b+c)$ for $x \in [b-c, b]$.  In other words, we change the starting point of $\ga$ from $\ga(a)$ to $\ga(a+c)$.

If we have a curve $\ga:(a, b) \to X$ where $X$ is a Hausdorff space, then we say that $\lim_{t \to a^+} \ga(t) = x$ for some $x \in X$ if for every open neighborhood $U$ of $x$ there is $\de > 0$ such that $\ga((a, a+\de)) \sb U$.
Since $X$ is Hausdorff this limit is unique if it exists, and by defining $\ga(a) = x$ we extend $\ga$ so it is continuous at $a$.
We can similarly define $\lim_{t \to b^-} \ga(t)$ and use it to make $\ga$ continuous at $b$.

\subsection{Surfaces}\label{ss:surf}
In this paper a \emph{surface} is a compact, but not necessarily connected, $2$-manifold.  As a $2$-manifold it is second countable and Hausdorff, and every point has an open neighborhood base of sets homeomorphic to open disks.  Compactness implies that there are finitely many components.  By the standard Classification of Surfaces Theorem each component is either a sphere with $h \ge 0$ handles added, denoted $S_h$ (an \emph{orientable} surface), or a sphere with $k \ge 1$ crosscaps added, denoted $N_k$ (a \emph{nonorientable} surface).
An orientable surface is one that can be given a clockwise direction that is globally consistent; an orientable surface equipped with such a clockwise direction is an \emph{oriented} surface.
The \emph{Euler genus} of a connected surface is either $\eg(S_h) = 2h$, or $\eg(N_k) = k$.

Suppose $\ga$ is a simple closed curve in a surface $\Si$, with image $C \sb \Si$.
A standard result in topology says that there is an open (regular) neighborhood of $C$ that is either an open \mobius. strip or an open cylinder, and we say that $\ga$ is a \emph{$1$-sided} or \emph{$2$-sided} curve, respectively.

\subsection{Pseudosurfaces}\label{ss:pseudosurf}
An \emph{open $m$-disk}, $m \ge 1$, is obtained from $m$ copies of an open disk by identifying their centers (i.e., it is the wedge sum of $m$ open disks).  The identified point is the \emph{center} of the $m$-disk.
When $m=1$ we designate an arbitrary point of the open $1$-disk as its center.
Closed $m$-disks are defined analogously.  Figure \ref{fig:3disk} shows a closed $3$-disk.  We refer to any $m$-disk as a \emph{multidisk}.

\begin{figure}[ht]
\hbox to\hsize{%
  \hss%

\begin{tikzpicture}[scale=1.25]
  \drawcone{0}{0}{90}{1.50}{0.50}{0.18}{solid}
  \drawcone{0}{0}{205}{1.35}{0.45}{0.17}{dashed}
  \drawcone{0}{0}{345}{1.45}{0.48}{0.18}{dashed}
\end{tikzpicture}
  \hss%
}
\caption{A $3$-disk}\label{fig:3disk}
\end{figure}

Suppose $\Si$ is a (not necessarily compact) $2$-manifold, and $\Pi$ is a quotient space $\Si/q$, where $q$ is a finite identification map $q : \Si \to \Pi$ (so there are finitely many merging inputs and merged outputs).
We say \emph{$\Pi$ is created by $q : \Si \to \Pi$}.  If $\Si$ is a surface we say $\Pi$ is a \emph{pseudosurface}.
Because $\Si$ is second countable and Hausdorff and we have finitely many merging inputs, it is easy to show that $\Pi$ is also second countable and Hausdorff.
For any $x \in \Pi$, the elements of $q\iv(x)$ are called the \emph{parents} of $x$, and $|q\iv(x)|$ is called the \emph{pinchdegree} of $x$.
By Observation \ref{cifo}, each solo output $r$ of $q$ has pinchdegree $1$, and still has an open neighborhood base of open disks.  We call $r$ a \emph{regular point}.
Each merged output $p$ has pinchdegree $m \ge 2$, and has an open neighborhood base of open $m$-disks centered at $p$.  We call $p$ a \emph{pinchpoint}.
Henceforth `multidisk (or $m$-disk) neighborhood of $x$' means `multidisk (or $m$-disk) neighborhood of $x$ centered at $x$' unless explicitly indicated otherwise.

\begin{observation}\label{quotps}
The composition of two finite identification maps is a finite identification map.
Therefore, a quotient space obtained from a pseudosurface by a finite identification map is also a pseudosurface.
\end{observation}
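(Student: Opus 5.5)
We need two things: that the composition of two finite identification maps is again a finite identification map, and that quotienting a pseudosurface by a finite identification map yields a pseudosurface.

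For the first claim, let $q_1 : X \to Y$ and $q_2 : Y \to Z$ be finite identification maps, and put $q = q_2 \circ q_1$. Clearly $q$ is a surjection, so the task is to show that $q$ acts as the identity on its solo inputs and has only finitely many merging inputs. Let $X_2$ be the (finite) set of merging inputs of $q_1$ and $Y_2$ the (finite) set of merging inputs of $q_2$.

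Suppose $x \notin X_2 \cup q_1\iv(Y_2)$.
\begin{itemize}
\item Since $x \notin X_2$, $x$ is a solo input of $q_1$, so $q_1(x) = x$.
\item Since $q_1(x) = x \notin Y_2$, $x$ is a solo input of $q_2$, so $q_2(x) = x$.
\item Hence $q(x) = x$.
\end{itemize}
We also check that such an $x$ is a solo input of $q$. Suppose $q(x') = x$. Then $q_1(x')$ lies in $q_2\iv(x) = \{x\}$, because $x$ is a solo output of $q_2$. So $q_1(x') = x$, and since $x$ is a solo output of $q_1$, we get $x' = x$.

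Consequently every merging input of $q$ lies in $X_2 \cup q_1\iv(Y_2)$. This set is finite: $q_1\iv(Y_2)$ is the union of finitely many fibers of $q_1$, and each fiber is either a single point (over a solo output) or a subset of the finite set $X_2$ (over a merged output).

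The remaining condition is that $q$ is the identity on every solo input. Here there is a subtlety. A point $x$ in $X_2 \cup q_1\iv(Y_2)$ might turn out to be a solo input of $q$ without $q(x) = x$ holding literally, because its image is a renamed merged output. Lemma \ref{qp}(2) lets us rename points of $Z$ by a homeomorphism, so we fix this by renaming each such image back to $x$ itself. Since there are only finitely many such points and renaming is harmless, the composition is, up to this renaming, a finite identification map. I expect this bookkeeping to be the only real obstacle.

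For the second claim, let $\Pi$ be created by $q_1 : \Si \to \Pi$ with $\Si$ a surface, and let $q_2 : \Pi \to \Pi'$ be a finite identification map. By Lemma \ref{qp}(1), the quotient topology $\top_\Pi/q_2$ coincides with $\top_\Si/(q_2 \circ q_1)$. By the first claim, $q_2 \circ q_1$ is a finite identification map, possibly after the renaming above. Hence $\Pi'$ is created from the surface $\Si$ by a finite identification map, and so $\Pi'$ is a pseudosurface by definition.
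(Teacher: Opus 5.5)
Your overall route is the one the paper intends. The paper states this as an observation without a written proof: check that $q=q_2\circ q_1$ is a finite identification map, then use Lemma \ref{qp}(1) to get $\Pi'=\Si/(q_2\circ q_1)$, which is a pseudosurface by definition. Your finiteness argument is fine, and so is your final paragraph.

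The flaw is the ``subtlety'' paragraph. The situation you worry about cannot occur. By handling it through a renaming, you only prove that $q$ is a finite identification map \emph{up to renaming}, which is weaker than the first sentence of the statement. The missing step uses surjectivity of $q_1$ to show that every point of $X_2\cup q_1^{-1}(Y_2)$ is a merging input of $q$:
\begin{enumerate}
\item If $x\in X_2$, there is $x'\ne x$ with $q_1(x')=q_1(x)$, so $q(x')=q(x)$.
\item If $x\notin X_2$ but $q_1(x)\in Y_2$, then $q_1(x)=x$ and there is $y'\ne x$ with $q_2(y')=q_2(x)$. Choose $x'$ with $q_1(x')=y'$. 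Then $x'\ne x$, because $q_1(x')=y'\ne x=q_1(x)$, and $q(x')=q(x)$.
\end{enumerate}
Hence the merging inputs of $q$ are exactly $X_2\cup q_1^{-1}(Y_2)$. The solo inputs are exactly the points you already showed satisfy $q(x)=x$, so $q$ is literally a finite identification map. With this fix you can drop the renaming and the appeal to Lemma \ref{qp}(2). Then $\Pi'=\Si/(q_2\circ q_1)$ holds exactly, so you do not need to appeal to pseudosurfaces being closed under homeomorphism.
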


Given a space $\Pi$ created by $q: \Si \to \Pi$ as above, a \emph{pinchcomponent} of $\Pi$ is $\Pi'=q(\Si')$ where $\Si'$ is a component of $\Si$, which we call the \emph{parent component} of $\Pi'$.
Equivalently, a pinchcomponent is the closure in $\Pi$ of a component of $\Pi-P$, where $P$ is the set of pinchpoints.  Pinchcomponents may intersect at pinchpoints.
We have $\Pi'=\Si'/q'$, where $q'$ is $q$ restricted to domain $\Si'$ and range $\Pi'$.  Thus, if $\Pi$ is a pseudosurface, so that $\Si$ is a surface, a pinchcomponent $\Pi'$ is itself a pseudosurface with connected parent surface $\Si'$.

Under the usual definition of a pseudomanifold (see, e.g., \cite[Section 24]{ST80}) the parent surface of a pseudosurface must be a connected surface, i.e., there must be only one pinchcomponent.  The idea of a \emph{generalized pseudosurface}, whose parent surface may be disconnected, seems to have been introduced in the context of graph embeddings by White \cite{Whi78}.  Recent literature in graph theory generally uses `pseudosurface' with this generalized meaning, although older literature requires the parent surface to be connected.  We follow the more recent usage; the parent surfaces of our pseudosurfaces may be disconnected.

A pseudosurface $\Pi$ may equivalently be defined as a compact second countable Hausdorff space where every point $x$ has an open neighborhood that is an open multidisk (centered at $x$).  This is the approach used in \cite{DST91}.
Compactness implies that the set $P$ of points where the neighborhood is an $m$-disk for some $m \ge 2$ is finite.  If $P$ were infinite it would have an accumulation point $x$.
Every open neighborhood of $x$ would contain a point $p \in P$, $p \ne x$.  But $x$ has an open multidisk neighborhood, and in that neighborhood no point except possibly $x$ belongs to $P$, which is a contradiction.
Since $P$ is finite, we can find a parent surface $\Si$ for $\Pi$ by \emph{dismantling} each pinchpoint $p$, meaning we replace a closed $m$-disk neighborhood of $p$ containing no other pinchpoint by $m$ closed disks, each with a center consisting of a copy of $p$.
(Huggett and Moffatt \cite{HM20} call this \emph{resolving} a pinchpoint.)

A pseudosurface is \emph{orientable} if it can be given a clockwise direction at all regular points that is globally consistent; an orientable pseudosurface equipped with such a clockwise direction is an \emph{oriented} pseudosurface.
Orientability of a pseudosurface is equivalent to orientability of its parent surface.

Suppose $\Pi$ is a pseudosurface created by an identification map $q : \Si \to \Pi$.
Suppose that $x \in \Pi$ has pinchdegree $m$ and $M$ is an open or closed $m$-disk neighborhood of $x$ (possibly $x$ is a regular point and $m=1$).  Then $\ti{M} = q\iv(M)$ is a disjoint union of $m$ open or closed disks in $\Si$, which we call open or closed \emph{sheets over $x$}, respectively.  Each sheet contains a unique parent $p$ of $x$, and we denote this sheet by $\ti{M}_p$.  The set $q(\ti{M}_p)$ is an open or closed disk in $\Pi$, which we call an open or closed \emph{plate at $x$}, respectively, and denote by $M_p$.
(Our notation here is similar to that typically used for covering spaces, although the map $q: \Si \to \Pi$ is not a covering projection unless $q$ is the identity.)
If we remove $x$ from a plate, or the appropriate parent of $x$ from a sheet, we talk about a \emph{punctured} plate or sheet, respectively.
Note that in the topology of $\Pi$, an open plate at a regular point is an open set, but open plates at pinchpoints are not open sets.  Closed plates are always closed sets.

\subsection{Lifting curves}\label{ss:liftcurve}
In this subsection we consider curves in pseudosurfaces.  We first examine what happens when a curve passes through a particular point, which may be a pinchpoint.  We then use this to characterize the situations where a curve can be lifted to a unique curve in the parent surface.

Suppose $\Pi$ is a pseudosurface created by $q : \Si \to \Pi$.
A curve $\ga: J \to \Pi$ is \emph{pinchpoint-finite} if there are only finitely many $t \in J$ for which $\ga(t)$ is a pinchpoint.
Suppose that $\ga$ is pinchpoint-finite, and $\ga$ passes through $x_0 = \ga(t_0)$, where $J$ contains points less than $t_0$.  Let $M$ be a multidisk neighborhood of $x_0$.  Suppose first that $x_0$ is a pinchpoint.
Since $\ga$ is pinchpoint-finite and continuous, there exists $t' < t_0$ such that $\ga([t', t_0))$ contains no pinchpoint and $\ga([t', t_0]) \sb M$.
Now $\ga([t',t_0))$ is a connected subset of $M-\{x_0\}$, so it is a subset of one of the components of $M-\{x_0\}$, which are punctured plates $M_p-\{x_0\}$ for parents $p$ of $x_0$.  In particular, $\ga[t',t_0) \sb M_{p_1} - \{x_0\}$ for some $p_1 \in q\iv(x_0)$, and so $\ga([t',t_0]) \sb M_{p_1}$.  We say that \emph{$\ga$ enters $x_0$ at $t_0$ through $M_{p_1}$}.
It is not hard to show that $p_1$ is independent of the choice of $M$.
If $x_0$ is a regular point, then $M$ is a disk and there is $t' < t_0$ such that $\ga([t,',t_0]) \subseteq M$.
We will still say that \emph{$\ga$ enters $x_0$ through $M_{p_1}$}, where $p_1 = x_0$ and $M_{p_1} = M$.

If we restrict $\ga$ to $\{t \in J \;|\; \ga(t)$ is not a pinchpoint$\}$, and consider the codomain as $\Si$ rather than $\Pi$, the resulting map $\ti\ga$ (a `partial lift' of $\ga$) describes a finite collection of curves in $\Si$, and we can take limits of $\ti\ga$ as in Subsection \ref{ss:curve}.
If we take an arbitrary open neighborhood $U$ of $p_1$ in $\Si$, we can choose the $M$ above so that $\ti M_{p_1} \sb U$.  Then for $t \in [t', t_0)$ we have $\ti\ga(t) = \ga(t) \in M_{p_1}-\{x_0\} = \ti{M}_{p_1}-\{p_1\} \sb U$.
This shows that in fact $p_1 = \lim_{t \to t_0^-} \ti\ga(t)$.

Similarly, if $J$ contains points greater than $t_0$,  there is a plate $M_{p_2}$ through which \emph{$\ga$ leaves $x_0$ at $t_0$}, where $p_2 = \lim_{t \to t_0^+} \ti\ga(t)$.

If $t_0 \in J\ir$ and $\ga(t_0)=x_0$ we say that $\ga$ \emph{crosses $x_0$ from plate $M_{p_1}$ to plate $M_{p_2}$ at $t_0$} if $\ga$ enters and leaves $x_0$ at $t_0$ using different plates $M_{p_1}$ and $M_{p_2}$, respectively.  In other words, $\lim_{t \to t_0^-}  \ti\ga(t) = p_1 \ne p_2 = \lim_{t \to t_0^+} \ti\ga(t)$.
We also say that $\ga$ \emph{internally crosses $x_0$}.  Otherwise $\ga$ \emph{skims $x_0$ at $t_0$} (this is always true if $x_0$ is a regular point).
If $\ga:[a,b] \to \Pi$ is a closed curve with $x_0 = \ga(a) = \ga(b)$, then we can say that $\ga$ \emph{terminally crosses $x_0$} (specifying plates if desired) if  $\lim_{t \to b^-} \ti\ga(t) \ne \lim_{t \to a^+} \ti\ga(t)$; otherwise $\ga$ \emph{terminally skims $x_0$}.
Crossings only occur at pinchpoints.

\begin{lemma}[Curve lifting]\label{curvelift}
Let $\Pi$ be a pseudosurface created by $q : \Si \to \Pi$, and let $\ga:[a,b] \to \Pi$ be a pinchpoint-finite curve.
Then there is a unique \emph{lifting} or \emph{parent curve} $\ga_0 : [a,b] \to \Si$, a curve in $\Si$ such that $\ga = q \circ \ga_0$, if and only if $\ga$ does not internally cross a pinchpoint.
Moreover, if $\ga_0$ exists and $\ga$ is closed, then $\ga_0$ is closed if and only if $\ga$ does not terminally cross a pinchpoint.
\end{lemma}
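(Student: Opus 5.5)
The plan is to build $\ga_0$ from the partial lift $\ti\ga$ described above. Since $\ga$ is pinchpoint-finite, let $a \le t_1 < t_2 < \dots < t_n \le b$ be the finitely many parameters with $\ga(t_i)$ a pinchpoint. On $[a,b] - \{t_1,\dots,t_n\}$, the point $\ga(t)$ is a regular point, which is a solo output, so any lift must satisfy $\ga_0(t) = q\iv(\ga(t)) = \ti\ga(t)$. Continuity of $\ti\ga$ there follows from Observation \ref{cifo}: regular points have the same open neighborhood base in $\Si$ and $\Pi$. This already pins down $\ga_0$ off the $t_i$. At each $t_i$ a lift must take a value in $q\iv(\ga(t_i))$. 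Because $\Si$ is Hausdorff and $\ga_0$ is to be continuous, that value must equal both one-sided limits $\lim_{t\to t_i^\pm}\ti\ga(t)$ whenever those exist. The discussion preceding the lemma shows that they do exist: they are the parents $p_1,p_2$ of the plates through which $\ga$ enters and leaves $\ga(t_i)$. This gives uniqueness immediately.

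For the direction ``lift exists $\Rightarrow$ no internal crossing'': if $\ga$ crossed at some interior $t_i$, the two one-sided limits would be different parents $p_1 \ne p_2$. Then $\ga_0$ could not be continuous at $t_i$, by uniqueness of limits in the Hausdorff space $\Si$.

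For the converse, suppose there is no internal crossing. At each interior $t_i$ define $\ga_0(t_i)=p_1=p_2$. At an endpoint $t_i\in\{a,b\}$, only one one-sided limit is relevant; use it. Then $q\circ\ga_0=\ga$ holds by construction. Continuity at $t_i$ follows from the one-sided limit statements $p = \lim \ti\ga(t)$ established earlier. Concretely, for any open $U \ni p$, choose a multidisk neighborhood $M$ with $\ti M_p \sb U$. Then $\ga$ stays in $M_p$ near $t_i$, on both sides, so $\ga_0$ stays in $\ti M_p \sb U$.

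The main subtlety is checking that the one-sided limits exist at every $t_i$ and are independent of the chosen $M$. The enter/leave discussion before the lemma handles this, so I would cite it rather than reprove it.

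For the final statement, assume $\ga_0$ exists and $\ga$ is closed. Then $\ga_0(a)=\lim_{t\to a^+}\ti\ga(t)$ and $\ga_0(b)=\lim_{t\to b^-}\ti\ga(t)$. If $\ga(a)$ is a regular point, both equal $q\iv(\ga(a))$, so $\ga_0$ is closed and there is no terminal crossing. If $\ga(a)$ is a pinchpoint, then $\ga_0(a)=\ga_0(b)$ exactly when the two limits agree, that is, exactly when $\ga$ terminally skims $\ga(a)$.
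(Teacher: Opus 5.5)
Your proposal is correct and follows essentially the same route as the paper. Both arguments take $\ga_0$ to be the partial lift $\ti\ga$ away from the finitely many pinchpoint parameters, and fill in the remaining values by the one-sided limits, which agree exactly when there is no internal crossing. Uniqueness and non-existence come from continuity in the Hausdorff space $\Si$, and closedness is read off from the endpoint definitions. You spell out the uniqueness and continuity checks in more detail than the paper does.
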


\begin{proof}
Suppose $\ga$ does not internally cross a pinchpoint.
Consider the map $\ti\ga$ from a subset of $[a,b]$ to $\Si$, as discussed above.  Define $\ga_0 : [a,b] \to \Si$ as follows.
If $\ga(t)$ is not a pinchpoint, define $\ga_0(t) = \ga(t) = \ti\ga(t)$.
Now suppose $\ga(t)$ is a pinchpoint.
If $t \in (a,b)$, define $\ga_0(t) = \lim_{u \to t^-} \ti\ga(u)$, which is also equal to $\lim_{u \to t^+} \ti\ga(u)$ because $\ga$ does not internally cross pinchpoints.
If $t=a$, let $\ga_0(a) = \lim_{u \to a^+} \ti\ga(u)$, and if $t=b$, let $\ga_0(b) = \lim_{u \to b^-} \ti\ga(u)$.
We have filled in the missing values for $\ti\ga$ to obtain $\ga_0$ using limits, which give the unique values that make $\ga_0$ continuous.
The condition for $\ga_0$ being closed follows from the definitions of $\ga_0(a)$ and $\ga_0(b)$.

If $\ga$ internally crosses a pinchpoint $\ga(t)$ then we cannot find $\ga_0$, because there is no single value of $\ga_0(t)$ that will make $\ga_0$ continuous at $t$.
\end{proof}

 We will handle curves that internally cross a pinchpoint by breaking them into segments.  Since a simple closed curve $\ga$ in a pseudosurface that does not cross any pinchpoints has a lifting $\ga_0$ that is a simple closed curve, we can define $\ga$ to be $1$-sided or $2$-sided according to the number of sides of $\ga_0$. 

\subsection{Crushing curves}\label{ss:contractcurve}
As mentioned above, we will need to identify the image of a curve $\ga$ in a pseudosurface $\Pi$ to a single point, which we refer to as \emph{crushing $\ga$}.
In particular, $\ga$ will represent an edge in an embedded graph, and so it will be either a simple path or a simple closed curve.
Here we discuss the effect on a surface or pseudosurface of crushing such a curve and show that the result is always a pseudosurface.

\begin{lemma}\label{surfcrush}
Crushing a simple path in a surface results in a surface homeomorphic to the original surface.  Crushing a $1$-sided simple closed curve in a surface results in a surface not homeomorphic to the original surface.  Crushing a $2$-sided simple closed curve in a surface results in a pseudosurface with exactly one pinchpoint, of pinchdegree $2$.
\end{lemma}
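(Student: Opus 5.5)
The plan is to work locally, using the regular neighborhoods from Subsection \ref{ss:surf}, and to combine this with the quotient bookkeeping of Lemma \ref{qp} and Observation \ref{cifo}. Let $C$ be the image of the curve. Since $C$ is compact and the surface $\Si$ is Hausdorff, $C$ is closed. Crushing $C$ to a point $c$ therefore has a closed preimage of the single merged output. By Observation \ref{cifo}, the topology is unchanged away from $c$, and an open neighborhood base at $c$ consists of images of small open sets $B$ with $C \sb B$. So it suffices to choose a closed regular neighborhood $N$ of $C$ and to identify the quotient $N/C$, relative to its boundary $\bdy N$. We then glue the complement $\Si - N\ir$ back along $\bdy N$, using Lemma \ref{qp}(2) to see that the result is well defined up to homeomorphism.

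\textbf{Simple path.} Here $N$ is a closed disk containing $C$ as an arc in its interior. By a Schoenflies-type argument, we may assume $C$ is a straight segment in a round disk. There is an explicit continuous surjection $N \to N$ that is the identity on $\bdy N$, maps $C$ to a single point, and is injective elsewhere; radial rescaling along rays from the segment gives one. This map induces a homeomorphism $N/C \cong N$ fixing the boundary. Gluing the complement back shows $\Si/C \cong \Si$.

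\textbf{$2$-sided closed curve.} Here $N$ is a closed annulus $S^1 \times [-1,1]$ with $C = S^1 \times \{0\}$. Crushing $C$ turns each half $S^1 \times [0,1]$ and $S^1 \times [-1,0]$ into a closed disk, namely a cone on $S^1$. The two disk centers are both the point $c$. So $N/C$ is a closed $2$-disk in the sense of Subsection \ref{ss:pseudosurf}, whose two boundary circles are exactly $\bdy N$. Equivalently, and more in the paper's style, we factor the quotient as two steps. First cut $\Si$ along $C$ and cap each of the two boundary circles by crushing it. By the cone argument, capping gives a surface $\Si'$, and $C$ becomes two points $c_1,c_2$. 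Then identify $c_1$ and $c_2$. By Lemma \ref{qp} this composite induces the same equivalence relation as the original crushing, apart from the cutting, which is handled by the local picture. The result is a pseudosurface with exactly one pinchpoint, of pinchdegree $2$.

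\textbf{$1$-sided closed curve.} Here $N$ is a closed Möbius strip with core $C$. Crushing $C$ yields the mapping cylinder of the double cover $\bdy N \to C$, collapsed at the core. This is a cone on the circle $\bdy N$, hence a closed disk. So $\Si/C$ is obtained by removing a Möbius strip and sewing in a disk, which is again a surface. To see it is not homeomorphic to $\Si$, compare Euler characteristics. Removing the open Möbius strip does not change $\chi$, since $\chi$ of a Möbius strip is $0$ and of its boundary circle is $0$; adding a disk raises $\chi$ by $1$. The number of components is unchanged, and $\chi$ differs, so the surfaces are not homeomorphic. In terms of the classification, the Euler genus drops by one on that component.

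The main obstacle is the local homeomorphism claims, especially making the cone identifications rigorous. This includes verifying that the quotient topology at $c$ really is the multidisk topology. I would handle it with Observation \ref{cifo}: in the product coordinates on $N$, neighborhoods of $c$ are images of sets $S^1 \times (-\epsilon,\epsilon)$ (or their Möbius analogues), which map to open $m$-disks. In keeping with the paper's stated level of rigor, I would take the existence of regular neighborhoods, and the straightening of an arc in a disk, as standard.
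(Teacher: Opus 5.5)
Your proposal is correct and follows essentially the same route as the paper. Both use Observation~\ref{cifo} to reduce to a neighborhood base at the crushed point, and both observe that a regular neighborhood of $C$ (a disk, M\"obius strip, or cylinder) becomes an open disk, an open disk, or an open $2$-disk, respectively; your Euler characteristic computation makes explicit the non-homeomorphism that the paper only asserts via the drop in Euler genus, while your alternative cut-and-cap description in the $2$-sided case is loosely phrased (cutting is not a quotient of $\Si$, so Lemma~\ref{qp} does not apply verbatim) but is unnecessary, since your direct local picture already suffices.
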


\begin{proof}
We omit some technicalities involving regular neighborhoods and provide the main idea of the proof.
Suppose $\Si$ is a surface, $\ga$ is a curve in $\Si$, $C$ is the image of $\ga$, and $r$ is an identification map crushing $\ga$, i.e., identifying $C$ to a point $y$ in $\Pi/r$.
Let $\Si'$ be the component of $\Si$ containing $C$; other components of $\Si$ are unaffected by $r$.
Since $C$ is closed, and we are quotienting it to a single merged output $y$, by Observation \ref{cifo} it suffices to find a suitable open neighborhood base for $y$ consisting of $m$-disks for some $m$.

If $\ga$ is a simple path then there are open disks $B$ in $\Si'$ with $C \sb B$, which become open disk neighborhoods of $y$.  Hence, $y$ is a regular point, and $\Si'/r$ is a surface homeomorphic to $\Si$.

If $\ga$ is $1$-sided then when we crush $C$ to $y$ each open \mobius. strip $B$ in $\Si'$ containing $C$ becomes an open disk, so $y$ is a regular point.
In this case $\Si'/r$ is a surface with Euler genus one smaller than $\Si'$ (loosely, we have removed a crosscap; the result may be orientable or nonorientable).

If $\ga$ is 2-sided then each open cylinder $B$ in $\Si'$ containing $C$ becomes an open $2$-disk with center $y$: the two plates at $y$ come from the two smaller cylinders into which $C$ cuts $B$.  Thus, $y$ is a pinchpoint of pinchdegree $2$, the only pinchpoint in $\Si'/r$, and $\Si'/r$ is a pseudosurface.
Let $\Si''$ be the parent surface of $\Si'/r$ obtained by dismantling $y$.
If $\ga$ is a separating curve in $\Si'$ ($\Si'-C$ is disconnected) then $\Si''$ has two components.  Otherwise, $\Si''$ has one component, with Euler genus $2$ smaller than $\Si'$ (loosely we have removed a handle, or a twisted handle formed by two crosscaps; the result may be orientable or nonorientable).

\end{proof}

\begin{corollary}\label{surfmulticrush}
Suppose we have several simple paths in a surface with disjoint images $C_1, C_2, \dots, C_k$.  If we identify each $C_i$ to a new point $c_i$ for $i = 1, 2, \dots, k$, then the result is a surface homeomorphic to the original surface.
\end{corollary}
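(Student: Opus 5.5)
The plan is to reduce the corollary to $k$ applications of Lemma \ref{surfcrush}, using Lemma \ref{qp} to break the simultaneous identification into a sequence of single crushings. Let $r$ be the identification map that sends each $C_i$ to $c_i$ and fixes everything else. Write $r = r_k \circ \dots \circ r_1$, where $r_i$ crushes only the (image of the) set $C_i$ to $c_i$. Since the $C_i$ are disjoint, this composition induces the same equivalence relation on $\Si$ as $r$. By Lemma \ref{qp}(1) the final topology is the same whether we quotient all at once or one step at a time. So it suffices to show that each individual step yields a surface homeomorphic to the previous one.

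We proceed by induction on $i$. Let $\Si_0 = \Si$ and $\Si_i = \Si_{i-1}/r_i$, and suppose $h_{i-1} : \Si_{i-1} \to \Si$ is a homeomorphism. The key observation concerns the later sets $C_{i}, \dots, C_k$. They are disjoint from $C_1, \dots, C_{i-1}$, and the earlier identification maps act as the identity on solo inputs. Hence these sets are unchanged as subsets of $\Si_{i-1}$.

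Moreover, each of them is still the image of a simple path in $\Si_{i-1}$. To see this, note that the composite map from $\Si$ to $\Si_{i-1}$ is continuous. Restricted to the closed set $C_j$ (for $j \ge i$), it is injective, so composing the original path with it gives a continuous injection $[a,b] \to \Si_{i-1}$. Applying Lemma \ref{surfcrush} to this simple path in the surface $\Si_{i-1}$ then shows that $\Si_i$ is a surface homeomorphic to $\Si_{i-1}$, and hence to $\Si$. After $k$ steps we obtain the corollary.

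The only point needing care is the compatibility between the spaces. The paths must remain paths in the intermediate spaces, which the continuity and injectivity argument above handles. The quotient structure must also be compatible: the topology near the unaffected points is unchanged. This follows from Observation \ref{cifo}, since each $C_i$ is compact and hence closed in a Hausdorff space. I expect this bookkeeping, rather than any genuinely new topology, to be the main obstacle, and it is routine given Lemma \ref{qp} and Lemma \ref{surfcrush}.
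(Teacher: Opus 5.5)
Your proposal is correct and follows essentially the same route as the paper's proof: you use Lemma \ref{qp} to replace the simultaneous identification by crushing the paths one at a time, apply Lemma \ref{surfcrush} at each step, and note that the remaining paths stay disjoint simple paths in the intermediate surface. Your continuity-and-injectivity argument just spells out that last point, which the paper states without detail.
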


\begin{proof}
By Lemma \ref{qp}, the specified identification map is equivalent to crushing the simple paths one at a time.  After each crushing we still have a surface homeomorphic to the original surface, by Lemma \ref{surfcrush}.  The remaining simple paths are still simple paths and still disjoint, so we can repeat, until all identifications have been made.
\end{proof}

Now we can state a result which will be important for defining minor operations for embeddings of graphs in pseudosurfaces.

\begin{theorem}\label{curvecontract}
Crushing a simple path or simple closed curve in a pseudosurface results in a pseudosurface.
\end{theorem}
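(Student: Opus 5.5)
The plan is to lift the crushing to the parent surface and use Lemma \ref{qp} together with Lemma \ref{surfcrush} and Observation \ref{quotps}. Let $\Pi$ be created by $q:\Si\to\Pi$, and let $\ga$ be a simple path or simple closed curve in $\Pi$ with image $C$. Let $r:\Pi\to\Pi/r$ crush $C$. Since $\ga$ is simple, it passes through each pinchpoint at most once, except that a closed curve may have $\ga(a)=\ga(b)$ equal to a pinchpoint. So $\ga$ is pinchpoint-finite, and only finitely many parameter values map to pinchpoints.

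First I would cut $\ga$ at every parameter value $t$ where $\ga(t)$ is a pinchpoint, and at the endpoints. This gives finitely many segments $\ga_1,\dots,\ga_k$, each defined on a closed interval $[t_{i-1},t_i]$. Each segment meets pinchpoints only at its ends, so it does not internally cross any pinchpoint. By Lemma \ref{curvelift}, each $\ga_i$ has a unique lift $\ti\ga_i$ in $\Si$. Because $\ga$ is simple and $q$ is injective away from the finitely many merging inputs, each lift is a simple path. The one exception is a simple closed curve with no pinchpoints at all, or a closed curve through a single pinchpoint that it terminally skims; in those cases the lift is a simple closed curve.

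The interiors of the lifts are pairwise disjoint. Their endpoints are parents of pinchpoints, and possibly of the regular point $\ga(a)$. Distinct lifts may share endpoints, for example when $\ga$ skims a pinchpoint, so that two consecutive segments enter and leave through the same parent.

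Next I would build an alternative factorization of $r\circ q$. Let $\ti C=\bigcup_i \ti\ga_i([t_{i-1},t_i])$. The equivalence relation induced by $r\circ q$ identifies $\ti C$ together with every point of $q\iv(\ga(t_j))$, for all $j$, into a single class. All other classes are those of $q$, minus the parents now absorbed into $\ti C$.

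Group the lifts into connected pieces by shared endpoints. Each piece is either a simple path or a simple closed curve in $\Si$. The closed case arises only when the whole of $\ga$ lifts to a closed curve, or when an arc of lifted segments returns to the same parent. Crush these pieces one at a time, as in Corollary \ref{surfmulticrush}, extended by Lemma \ref{surfcrush} to cover closed pieces. The result is a surface, or a pseudosurface with one new pinchpoint of pinchdegree $2$.

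After that, a finite identification map merges the resulting points with each other and with the remaining parents of the pinchpoints on $C$. It also carries out all the other identifications of $q$. By Lemma \ref{qp}(2) the final space is homeomorphic to $\Pi/r$. By Observation \ref{quotps}, a finite identification of a pseudosurface is a pseudosurface.

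The step I expect to be hardest is showing that the pieces of $\ti C$ really are simple paths or simple closed curves, with disjoint images, in a way that allows the crushings to be done sequentially. After the first crushing, the remaining pieces must still be simple paths or simple closed curves in a surface or pseudosurface. A crushed closed piece produces a pinchpoint, and later pieces may touch it only at endpoints. To handle this, I would first crush all the disjoint closed pieces. The only way a closed piece can arise is if it is the entire lift, and then no other pieces exist. If a pinchpoint is created, I would dismantle it via Lemma \ref{qp} before continuing, so that the remaining crushings take place in a genuine surface. I would then check that a path touching the crushed point only at an endpoint lifts to a simple path in the dismantled surface.
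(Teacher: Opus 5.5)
Your proposal is correct and is essentially the paper's proof. You lift to $\Si$, crush the disjoint lifted pieces via Corollary~\ref{surfmulticrush} or Lemma~\ref{surfcrush}, and finish with a finite identification map, Lemma~\ref{qp}, and Observation~\ref{quotps}; the pieces are simple paths, or a single simple closed curve exactly when $\ga$ is closed and crosses no pinchpoint. Your cutting at every pinchpoint and regluing at skimmed ones yields exactly the paper's pieces (the paper cuts only at crossed pinchpoints and cyclically reparameterizes a closed curve so that it terminally crosses one), and your dismantling contingency is never needed because, as you observe, a closed piece must be the entire lift.
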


\begin{proof}
Suppose that $\Pi$ is a pseudosurface created by $q_1 : \Si \to \Pi$. Let $P$ be the set of merged outputs of $q_1$ (the pinchpoints of $\Pi$) and $\ti P = q_1\iv(P)$ the set of merging inputs.  Both $P$ and $\ti P$ are finite.
Suppose $\ga: [a,b] \to \Pi$ is a simple path or simple closed curve in $\Pi$ with image $C$, and let $\ti C = q_1\iv(C)$.
Note that $\ga$ is pinchpoint-finite because $P$ is finite and each $p \in P$ is $\ga(t)$ for at most one $t \in [a,b]$, unless $\ga$ is closed and $p=\ga(a)=\ga(b)$.
Let $q_2$ be the identification map corresponding to crushing $C$ in $\Pi$ to a point $c$.  We wish to show that $\Pi/q_2 = \Si/(q_2 \circ q_1)$ is a pseudosurface.  The map $q_2 \circ q_1$ on $\Si$ sends all elements of $\ti C$ to $c$, and each $s \in \ti P - \ti C$ to $q_1(s)$, while preserving all other points.

First suppose that $\ga$ is a simple path, and label the pinchpoints crossed internally by $\ga$ as $\ga(a_1)$, $\ga(a_2)$, $\dots$, $\ga(a_{k-1})$ where $k \ge 1$ and $a=a_0 < a_1 < a_2 < \dots < a_{k-1} < a_k=b$.
For each $i$ with $1 \le i \le k$, the restriction of $\ga$ to $[a_{i-1}, a_i]$ does not cross any pinchpoints internally, so by Lemma \ref{curvelift}, it has a lifted curve $\ga_i : [a_{i-1}, a_i] \to \Si$, which is a simple path because $\ga$ was a simple path.  Let the image of $\ga_i$ be $C_i \sb \ti C$, and let $C_0 = C_1 \cup C_2 \cup \dots \cup C_k \sb \ti C$.  All points of $\ti C - C_0$ are preimages of pinchpoints, so $\ti C - C_0$ is a subset of $\ti P \cap \ti C$ and hence is finite.

Because $\ga$ crosses a pinchpoint at each $a_i$ with $1 \le i \le k-1$, we know that $\ga_{i}(a_i) \ne \ga_{i+1}(a_i)$ for each such $i$, and so $C_1, C_2, \dots, C_k$ are disjoint.
Thus, if we crush $C_i$ to a point $c_i$ for all $i$ with $1 \le i \le k$, defining an identification map $r_1$, then $\Si'=\Si/r_1$ is a surface by Corollary \ref{surfmulticrush}, which still contains the sets $\ti P - \ti C$ and $\ti C - C_0$.
Now we define a finite identification map $r_2$ on $\Si'$ by sending all elements of the finite set $\{c_1, c_2, \dots, c_k\} \cup (\ti C - C_0)$ to $c$, and all $s \in \ti P - \ti C$ to $q_1(s)$.
It is not difficult to check that $r_2 \circ r_1 = q_2 \circ q_1$.
Therefore, $\Pi/q_2 = \Si/(q_2 \circ q_1) = \Si/(r_2 \circ r_1) = \Si'/r_2$, which is a pseudosurface because $\Si'$ is a surface and $r_2$ is a finite identification map.

If $\ga$ is a simple closed curve that crosses at least one pinchpoint (either internally or terminally), we first cyclically reparameterize $\ga$ so that $\ga$ terminally crosses a pinchpoint, which does not change its image.  Then we can apply almost exactly the same argument as when $\ga$ is a simple path, splitting $\ga$ at internally crossed pinchpoints.
The only difference is that in showing that $C_1, C_2, \dots, C_k$ are disjoint we also use the fact that $\ga_1(a_0) \ne \ga_k(a_k)$ because $\ga$ terminally crosses $\ga(a_0)=\ga(a_k)$.

What remains is when $\ga$ is a simple closed curve that does not cross any pinchpoints.  Then by Lemma \ref{curvelift} we can lift $\ga$ to a simple closed curve $\ga_0$ in $\Si$, with image $C_0$.  
(If $\ga$ skims pinchpoints then $C_0$ contains merging inputs, but this will not cause any problems.)
As before, $\ti C - C_0 \sb \ti P \cap \ti C$ is finite.
Let $r_1$ be the identification map on $\Si$ that crushes $C_0$ to a point $c_0$. Then $\Om = \Si/r_1$ still contains the sets $\ti P - \ti C$ and $\ti C - C_0$.
Now we define a finite identification map $r_2$ on $\Om$ by sending all elements of the finite set $\{c_0\} \cup (\ti C - C_0)$ to $c$, and all $s \in \ti P - \ti C$ to $q_1(s)$.
It is not difficult to check that $r_2 \circ r_1 = q_2 \circ q_1$. Therefore, $\Pi/q_2 = \Si/(q_2 \circ q_1) = \Si/(r_2 \circ r_1) = \Om/r_2$.
By Lemma \ref{surfcrush} we know that if $\ga$ is $1$-sided, i.e., $\ga_0$ is $1$-sided, then $\Om$ is a surface, and hence $\Pi/q_2 = \Om/r_2$ is a pseudosurface by definition.
If $\ga$ is $2$-sided, i.e., $\ga_0$ is $2$-sided, then $\Om$ is a pseudosurface, and so $\Pi/q_2 = \Om/r_2$ is a pseudosurface by Observation \ref{quotps}.
\end{proof}

From the above proof we see that the parent surface (up to homeomorphism) changes only in the case where $\ga$ is a simple closed curve that does not cross any pinchpoints (internally or terminally). 
In the other cases, $\Si'$ is the parent surface of the new pseudosurface, but $\Si'$ is homeomorphic to the original parent surface $\Si$.

Theorem \ref{curvecontract} and its proof do not give details of the structure of the new pseudosurface.  These details can be worked out, but this is not necessary for our purposes.  We mention a heuristic that extends \cite[p.~99]{EM15}: when the image $C$ of a simple path or simple closed curve in a pseudosurface $\Pi$ is crushed to a point $c$, a neighborhood of $c$ in the new pseudosurface can be determined by `fattening' $C$ to include a small neighborhood of each of its points in $\Pi$; each boundary component of the fattened image corresponds to a plate at $c$.

Theorem \ref{curvecontract} will be used frequently and without explicit reference in the rest of this paper.

\section{Graphs and embeddings}\label{sec:graphemb}

Our framework for embeddings of graphs in pseudosurfaces builds on the theory of embeddings in surfaces.  We therefore need to establish basic definitions for graphs and embeddings of graphs in surfaces, and briefly summarize duality, twisted duality, and minors for cellular embeddings of graphs in surfaces, which we do in this section.
Readers familiar with basic ideas in topological graph theory may be able to skip or skim most of this section, with the exception of Subsection \ref{ss:minorsurf}, which raises issues regarding the definition of minor operations that are important later.

\subsection{Graphs}\label{ss:graph}
In this paper graphs are finite, and may have loops and multiple edges.  It is therefore most convenient to regard a graph as consisting of a finite set of vertices and a finite set of half-edges.  The half-edges are paired into edges, and each half-edge is incident with a unique vertex.  Other than regarding half-edges, rather than edges, as atomic (i.e., fundamental) objects, we follow standard graph theory terminology and notation as provided by West \cite{West}.

When convenient we allow slightly extended versions of graphs.  We sometimes allow graphs to have \emph{semiedges}, which are half-edges, incident with a unique vertex, that do not form part of a full edge.
Semiedges will be useful in analyzing embeddings with `pinched edges'.
We sometimes also allow graphs to have \emph{free loops}, which are atomic objects that are considered to be cycles of length $0$.  Specifically, free loops will be allowed in gems and jewels (see Subsection \ref{ss:repembsurf}) but not elsewhere.

Graphs have minor operations, consisting of deleting and contracting edges, and deleting vertices.
An edge of a graph is a \emph{loop} if its two incident vertices are the same; otherwise it is a \emph{link}.
When we contract an edge we delete the edge and identify its ends.  If we contract a link $e$, any edges parallel to $e$ become loops.
Contraction of a loop is the same as deletion.
Deletion of a vertex means deletion of all incident edges followed by deletion of the (now isolated) vertex itself.
We can also delete and contract semiedges and free loops: deletion is defined in the obvious way, and contraction is defined to be the same as deletion.

When discussing embeddings of graphs we want to also regard our graphs as topological spaces.  We therefore think of each vertex as a single point, and each edge as a copy of the interval $[0,1]$, and perform appropriate identifications to join the ends of each edge to the appropriate vertex or vertices.  The graph is then represented by a $1$-dimensional CW complex (1-complex).
If we allow semiedges, each semiedge is represented by a copy of the interval $[0,\frac12]$ with the point corresponding to $0$ identified with a vertex.  If we allow free loops, each free loop corresponds to a copy of a circle $S^1$, disjoint from the rest of the graph.
When no confusion will result, we will not distinguish between a graph and its $1$-complex.

An embedding of a graph in a topological space, or \emph{embedded graph}, is a triple $G=(\Ga, X, \phi)$ where $\Ga$ is the \emph{underlying graph}, $X$ is the \emph{underlying topological space}, and $\phi$ is a homeomorphism from $\Ga$ (considered as a $1$-complex) to a subspace of $X$.  Often we use $G$ to mean the image $\phi(\Ga) \sb X$, so we talk about `an embedded graph $G \sb X$', and we use $V(G)$ and $E(G)$ to mean the images of the vertex and edge sets of $\Ga$, respectively.

When we refer to the \emph{interior} of an edge $e$ of a graph, denoted by $e\ir$, we mean the edge with its incident vertices deleted, which is homeomorphic to the open interval $(0,1)$.  We use this terminology and notation even when $e$ is embedded in a larger space $X$, where $e\ir$ is often not actually the interior of $e$ as a subset of $X$.

\subsection{Embeddings of graphs in surfaces}
\label{ss:embgraphsurf}
Suppose $\Si$ is a surface and we have an embedded graph $G \sb \Si$ (as defined in Subsection \ref{ss:graph}).  Then a \emph{face} of $G$ is a component of $\Si-G$, and we use $F(G)$ to denote the set of faces.  A face is \emph{supported} if its closure in $\Si$ intersects the embedded graph $G$, and \emph{unsupported} otherwise.  Unsupported faces are components of $\Si$ that do not intersect the embedded graph.

It is common to consider only \emph{cellular} embeddings, where every face is homeomorphic to an open disk.  The importance of these has been realized for a long time; for example, the generalized version of Euler's formula requires an embedding to be cellular.
Youngs \cite{You63} showed that minimum orientable genus embeddings and minimum Euler genus embeddings of connected graphs must be cellular, and defined the \emph{capping} or \emph{cellularizing} operation by which a general embedding can be reduced to a cellular embedding while preserving many features of the embedding.
This involves removing any faces that are not homeomorphic to a disk, including all unsupported faces, and gluing disks to all resulting boundary components.  Some care needs to be exercised if faces are removed from both sides of an edge.

Cellular embeddings have a number of other representations in addition to the embedding itself, some of them purely combinatorial.  These representations determine the cellular embedding up to homeomorphism.
Note that in a cellular embedding an isolated vertex must be embedded in a surface component that is a sphere containing no other part of the graph.
Cellular embeddings can include free loops, each as a circle embedded in its own sphere $S_0$.
They can also include semiedges, each as a simple path starting at a vertex and otherwise disjoint from the graph.  The end of an embedded semiedge that is not a vertex will be called the \emph{tip} of the semiedge.

Not all embeddings of graphs in surfaces are cellular.
Moreover, some operations that we want to perform on embedded graphs do not always preserve cellularity of an embedding.  These include deleting an edge and twisting (taking a partial Petrie dual with respect to) an edge.  To stay in the class of cellular embeddings, we often perform some operation and then cellularize the result.

\subsection{Representations of cellular embeddings in surfaces}
\label{ss:repembsurf}
We assume the reader is generally familiar with the following common representations of cellular embeddings, but provide brief descriptions and references.

A \emph{signed rotation system} (see, e.g., \cite[Subsection 3.2.3]{GT} or \cite[Section 3.3]{MT}) is a pair $(\rho, \sigma)$ where $\rho$ is a permutation of the half-edges of $G$ and $\sigma$ assigns a sign to each edge.  The cycles of $\rho$ describe the cyclic order of half-edges around each vertex relative to a choice of a local positive (clockwise) direction at each vertex.  For a given edge $e$, $\sigma(e)$ is either $+1$ or $-1$ according to whether the local clockwise direction is preserved or reversed, respectively, as $e$ is traversed.
If present, semiedges are included in the rotation at their incident vertex but do not have a sign.

A \emph{band decomposition} (see, e.g., \cite[Subsection 3.2.1]{GT}) is obtained by `fattening' the vertices of $G$ in $\Si$ to become closed disks, and the edges of $G$ to become rectangles (topologically also closed disks), so that each face of $G$ is also represented by a closed disk.  The closed disks representing vertices, edges, and faces are known as \emph{$0$-, $1$- and $2$-bands}, respectively.
If $\{i,j,k\} = \{0,1,2\}$, the boundary of each $i$-band is divided into an even number of segments, with segments shared with a $j$-band alternating with segments shared with a $k$-band.  For $1$-bands, the number of segments is exactly $4$.
An isolated vertex is represented by a surface component that is a sphere, divided by a circle into a $0$-band and a $2$-band.
A semiedge can be represented by a $1$-band whose boundary can be divided into two segments, one shared with a $0$-band and the other shared with a $2$-band.

A \emph{ribbon graph} (see, e.g., \cite{BR02, EM13}) is a surface with boundary obtained from a band decomposition by deleting the interiors of the $2$-bands. It therefore contains closed disks representing vertices, and closed disks (often called \emph{ribbons}) representing edges.  The boundary components (abbreviated to \emph{boundaries}) represent the faces.  Ribbon graphs  can also be defined as collections of vertex disks and edge disks satisfying certain intersection properties, and are also known as \emph{fatgraphs}.
Isolated vertices are represented by an isolated vertex disk.
Ribbon graphs incorporate semiedges as disks that share exactly one boundary segment with a vertex disk.

Gems and jewels are purely combinatorial representations using edge-colored graphs (see, e.g., \cite{BL95, Lins82}).
A \emph{gem} $K$ is a properly $3$-edge-colored cubic graph that may be regarded as the $1$-skeleton of a band decomposition $B$ (or ribbon graph), where the edges of $K$ are colored according to the types of band of $B$ on either side.  We color an edge of $K$ with color $\cv$, $\ca$, or $\cf$, according to whether the bands on each side of $e$ are $0$- and $1$-bands, $0$- and $2$-bands, or $1$- and $2$-bands, respectively.
The edges of any two colors $c_1, c_2$ then form a $2$-factor, and we refer to its component cycles as $c_1c_2$-cycles.  The colors $c_1$ and $c_2$ alternate along each $c_1c_2$-cycle.  The $\ca\cv$-cycles represent vertices, so we call them \emph{v-gons}, and the $\ca\cf$-cycles represent faces, so we call them \emph{f-gons}.  The $\cv\cf$-cycles all have length $4$, and represent edges, so we call these \emph{edge-squares} or just \emph{e-squares}.

A \emph{jewel} is a properly $4$-edge-colored $4$-regular graph obtained from a gem by adding two diagonal edges colored $\cz$ to each edge-square (we will also refer to the resulting copy of $K_4$ as an edge-square when no confusion will result).  The $\ca\cz$-cycles in a jewel represent \emph{Petrie walks}, also known as \emph{zigzags}, in the embedding, so we call these cycles \emph{z-gons}.
To represent isolated vertices embedded in a spherical surface component, gems and jewels may contain free loops, which are considered to have color $\ca$, so that each free loop is a v-gon, an f-gon, and (in a jewel) also a z-gon.
To incorporate semiedges, a gem may contain $\cv\cf$-cycles of length $2$, i.e., parallel edges of colors $\cv$ and $\cf$, which we call \emph{semiedge-digons}.  In a jewel we add a third parallel edge of color $\cz$ to each semiedge-digon.

One operation we use for graphs derived from gems or jewels is \emph{smoothing along $c$}, where usually $c \in \{\cv, \cf, \cz\}$.  Typically we will choose one or more e-squares or semiedge-digons and (1) remove from each all edges except those of color $c$, and change the edges of color $c$ to color $\ca$.  The vertices of the e-square or semiedge-digon now have degree $2$, being incident with two edges of color $\ca$.  We now (2) replace each maximal path of edges of color $\ca$ by a single edge of color $\ca$ (deleting all internal vertices), and each cycle of edges of color $\ca$ by a free loop of color $\ca$ (deleting all vertices).
The result is a new jewel or gem.
Smoothing will be used to define edge minor operations for gems and jewels.

We will use gems and jewels as our primary combinatorial representation for graphs embedded in surfaces and also (with some extensions) for graphs embedded in pseudosurfaces.

\subsection{Duality for embeddings in surfaces}\label{ss:dualsurf}

To form a dual $G\du$ for an embedding of a graph $G$ in a surface, we put one dual vertex in each face, and add a dual edge $e^*$ crossing a primal edge $e$ and joining the dual vertices in the two faces (which may be the same face) on either side of $e$.
This is sometimes called the \emph{geometric dual} to distinguish it from other duals (discussed below).
Each semiedge $d$ has a dual semiedge $d^*$, obtained by joining the dual vertex in the unique face to which $d$ is incident to the tip of $d$.
See Figure~\ref{fig:dualsemiedge} for an example of a planar embedding of a graph with semiedges: the primal graph is black, the dual is red, and tips of semiedges are indicated by small open circles.
If the primal embedding is cellular, the dual embedding is cellular, and unique (up to homeomorphism).

\begin{figure}[ht]
\hbox to\hsize{%
  \hss%
\begin{tikzpicture}[scale=0.85]
 \begin{scope}[rotate=-90]
  \coordinate (A)  at ( 0.0,7.30);
  \coordinate (UL) at (-1.8,5.85);
  \coordinate (UR) at ( 1.8,5.85);
  \coordinate (ML) at (-1.8,3.30);
  \coordinate (MR) at ( 1.8,3.30);
  \coordinate (LL) at (-1.8,1.45);
  \coordinate (LR) at ( 1.8,1.45);
  \coordinate (R1) at ( 0.0,6.30);
  \coordinate (R2) at ( 0.0,4.40);
  \coordinate (R3) at (-1.0,3.90);
  \coordinate (J)  at ( 0.0,0.00);

  \draw[red] (R1) .. controls (1.04,6.90) and (3.40,6.20) ..
    (3.40,2.50) .. controls (3.40,0.20) and (2.00,-0.70) .. (J);
  \draw[red] (J) .. controls (-2.00,-0.70) and (-3.40,0.20) .. (-3.40,2.50)
    .. controls (-3.40,6.20) and (-1.04,6.90) .. (R1);
  \draw[red] plot[smooth,tension=0.75] coordinates
     {(J) (-1.45,0.40) (-2.65,2.40) (-2.10,4.30) (R2)};   
  \draw[red] plot[smooth,tension=0.75] coordinates
     {(R2) (2.10,4.30) (2.65,2.40) (1.45,0.40) (J)};
  \draw[red] (R1) -- (J);
  \draw[red] (LL) .. controls (-1.05,0.65) .. (J);
  \draw[red] (LR) .. controls ( 1.05,0.65) .. (J);
  \draw[red] (R3) -- (R2);

  \draw[blk] (A) -- (UL) -- (UR) -- (A);
  \draw[blk] (ML) -- (MR);
  \draw[blk] (UL) -- (LL);
  \draw[blk] (UR) -- (LR);
  \draw[blk] (R3) -- (ML);        

  \node[bv] at (A) {}; \node[bv] at (UL) {}; \node[bv] at (UR) {};
  \node[bv] at (ML) {}; \node[bv] at (MR) {};
  \node[tv] at (LL) {}; \node[tv] at (LR) {};
  \node[rv] at (R1) {}; \node[rv] at (R2) {}; \node[tv] at (R3) {};
  \node[rvbig] at (J) {};

 \end{scope} 
\end{tikzpicture}
  \hss%
}
\caption{Dual of an embedding with semiedges}\label{fig:dualsemiedge}
\end{figure}

To take the dual of a band decomposition representing a cellular embedding, we just relabel $0$-bands as $2$-bands and vice versa.  The $1$-bands representing edges (or semiedges) are unchanged (so each primal edge $e$ and its dual edge $e^*$ are represented by the same $1$-band).  So in some representations (in particular, band decompositions, ribbon graphs, gems and jewels) the primal edge $e$ and the dual edge $e^*$ are essentially the same object, and we refer to both as $e$, but for actual embeddings in surfaces we distinguish between them.

To take the dual of a ribbon graph, we glue a disk to each boundary (representing a face) and then delete the interiors of the vertex disks.  The new disks represent the vertices of the dual embedding.  The edge disks in the primal ribbon graph remain as edge disks in the dual ribbon graph.

To take the dual of a gem (or jewel), we just exchange the colors $\cv$ and $\cf$ in the coloring of the edges.  Thus, the v-gons in the primal gem, representing vertices, become f-gons in the dual gem, representing faces, and vice versa.

We will use the following classification for edges of a graph embedded in a surface.
Recall that an edge of a graph is either a loop or a link.
We say a loop is \emph{twisted} or \emph{untwisted} according to whether it is $1$-sided or $2$-sided, respectively, as a closed curve in the surface.
The dual edge of a link is a \emph{colink}, and the dual edge of a twisted or untwisted loop is a \emph{twisted} or \emph{untwisted coloop}, respectively.
Alternatively, we may characterize a colink as an edge with distinct faces on its two sides, and a coloop has the same face on both sides.
If we assign a direction to the boundary of the face on both sides of a coloop, the coloop is twisted or untwisted according to whether the boundary passes through it twice in the same direction, or once in each direction, respectively.

There are thus nine possibilities for an edge: it can be one of a link, twisted loop, or untwisted loop, and also one of a colink, twisted coloop, or untwisted coloop.  All nine possibilities do in fact occur.

\subsection{Twisted duality for embeddings in surfaces}
\label{ss:twdualsurf}

Cellular embeddings of graphs in surfaces actually have a rich theory involving three duality operations, and partial versions of these operations (applied to individual edges or subsets of edges).
The \emph{dual} of an edge-based operation $o_1$ is $o_2$ defined by $G \,o_2\, e = (G\du \,o_1\, e\du)\du$ for an edge $e$, or $G \,o_2\, A = (G\du \,o_1\, A\du)\du$ if $o_1$ can unambiguously be applied to a set $A$ of edges.

First, in addition to the geometric dual described above, we can also form the \emph{Petrie dual $G\pe$} of a cellularly embedded graph $G$.  This amounts to twisting all edges, i.e., changing the sign of all edges in a signed rotation system representation.  It corresponds to giving all edge ribbons a half-twist in a ribbon graph, or swapping the colors $\cf$ and $\cz$ in a jewel (which can be translated into an action on a gem which modifies the $4$-cycle in an e-square).

As pointed out by Wilson \cite{Wil79}, the geometric and Petrie dual operations are involutions that satisfy the relation $G\du\pe\du = G\pe\du\pe$.
Thus, applying the geometric and Petrie dual operations to an embedded graph $G$ generates at most six distinct embeddings $G$, $G\du$, $G\pe$, $G\du\pe$, $G\pe\du$ and $G\du\pe\du = G\pe\du\pe$.
Considering the geometric and Petrie duals as operations on the set of all embedded graphs, they generate an action of the symmetric group $S_3$.
In terms of jewels, this group permutes the three edge colors $\cv$, $\cf$ and $\cz$, leaving $\ca$ fixed.  The embedding $G\du\pe\du = G\pe\du\pe$ is often called the \emph{Wilson dual $G\wi$}.  The Wilson dual operation is an involution, and in a jewel it corresponds to swapping the colors $\cv$ and $\cz$.

The geometric, Petrie, and Wilson duals can also be applied to subsets of the edges of an embedded graph to give \emph{partial duality} operations.  Partial Petrie duals twist individual edges, or a subset of edges; these operations have been used for a long time.  In terms of an actual embedding, the partial Petrie dual with respect to a set $A$ of edges amounts to inserting a crosscap in the surface in the middle of each edge in $A$, and then cellularizing the resulting embedding.
However, partial geometric duality, usually referred to as just partial duality, was not defined until 2009, by Chmutov \cite{Chm09}.
Partial geometric duality and partial Petrie duality generate an action of the group $S_3^E$ on cellularly embedded graphs with edge set $E$, as described by one of us (J.E.-M.) and Moffatt \cite{EM12}; the resulting operations are generally known as \emph{twisted duals}.  In terms of jewels, this amounts to specifying an individual permutation of the colors $\cv, \cf, \cz$ for each e-square.
We use $G \pdu A$ and $G \ppe A$ to represent the partial dual and partial Petrie dual of $G$ with respect to a set of edges $A$, and use notation such as $G \pdu A \ppe B$ for $(G \pdu A) \ppe B$ and $G \pdu\ppe A$ for $G \pdu A \ppe A$.  In this setting we do not distinguish between $e$ and $e\du$ or $A$ and $A\du$.  A partial Wilson dual can be defined as $G \pwi A = G \pdu\ppe\pdu A$, and we also have $G\pwi A = G \ppe\pdu\ppe A = (G\du \ppe A)\du$, so partial Wilson dual is the dual operation for partial Petrie dual.

\begingroup
\def\po{1\to2}\def\mo{2\to1}\def\oo{1\to1}

The effects of the three different partial duals on vertices and faces will be important for our analysis.
We consider partial duals with respect to an individual edge $e$ in a cellularly embedded graph $G$.  Only vertices and faces that are incident with $e$ are affected.
The results are summarized in Table \ref{tab:effpd}.  Here $\po$ indicates that a single object (vertex or face) is split into two; $\mo$ indicates that two objects are combined into one; and $\oo$ indicates that there is a single object that remains a single object, although the ordering of the edges (rotation at a vertex, or boundary walk of a face) associated with the object may change.
(Note that partial Petrie duals and partial Wilson duals affect the zigzags incident with $e$; partial geometric duals do not.  Showing the effect on zigzags would make the table appear more symmetric.)

\begin{table}[ht]
 \begin{tabular}{|l|c|c|c|}
  \hline
  \multicolumn{4}{|l|}{$G \mapsto G \ppe e$ affects faces but not vertices} \\
  \hline
  Status of $e$ in $G$ & colink & untw.~coloop & tw.~coloop \\
  Effect on faces incident with $e$ & $\mo$ & $\oo$ & $\po$ \\
  \hline
 \noalign{\vskip10pt}
  \hline
  \multicolumn{4}{|l|}{$G \mapsto G \pwi e$ affects vertices but not faces} \\
  \hline
  Status of $e$ in $G$ & link & untw.~loop & tw.~loop \\
  Effect on vertices incident with $e$ & $\mo$ & $\oo$ & $\po$ \\
  \hline
 \noalign{\vskip10pt}
  \hline
  \multicolumn{4}{|l|}{$G \mapsto G \pdu e$ affects vertices and faces} \\
  \hline
  Status of $e$ in $G$ & link & untw.~loop & tw.~loop \\
  Effect on vertices incident with $e$& $\mo$ & $\po$ & $\oo$ \\
  \hline
  Status of $e$ in $G$ & colink & untw.~coloop & tw.~coloop \\
  Effect on faces incident with $e$ & $\mo$ & $\po$ & $\oo$ \\
  \hline
 \end{tabular}
 \bigskip
 \caption{Effects of different partial dualities}\label{tab:effpd}
 \end{table}

\endgroup

\subsection{Minors for embeddings in surfaces}\label{ss:minorsurf}
Now we discuss edge-based minors for graphs cellularly embedded in surfaces.
Minor operations for abstract graphs form the basis of the famous Robertson-Seymour graph minors project.
For graphs we can take minors in the class of general graphs (loops and multiple edges allowed), or restrict to the class of simple graphs.  We know that, for example, embeddability in a given surface can be characterized by a finite number of forbidden minors, which are simple graphs.
Minor operations are also important for embedded graphs, but for a long time it was unclear how these should be defined in general.  There are seemingly natural ways to define deletion or contraction of an edge, which correspond to deletion and contraction of edges in abstract graphs, but unfortunately they can both create embeddings that are no longer cellular embeddings in a surface.

To delete an edge, the natural operation on the embedding is \emph{removal}: we remove all interior points of the edge from the image of the embedding.  This works for colinks.  For a twisted coloop, this results in a face homeomorphic to a \mobius. strip, with one boundary component.  For an untwisted coloop, this results in a face homeomorphic to a cylinder, with two boundary components.  In the last two cases the embedding is no longer cellular.

To contract an edge or semiedge, the natural operation on the embedding is \emph{quotienting}: we take the topological quotient where the image of the edge or semiedge is identified to a single point.
Lemma \ref{surfcrush} helps to analyze what happens.
A link (simple path) can be contracted without changing the surface or affecting cellularity of the embedding.  A twisted loop ($1$-sided simple closed curve) can be contracted without affecting cellularity of the embedding, but this does change the surface, effectively removing a crosscap.  Contraction of an untwisted loop ($2$-sided simple closed curve) creates a pinchpoint, so we no longer have a surface.

If we work with loopless graphs, then the problems with edge deletion can be fixed by cellularization, and there are no problems with edge contraction, assuming we delete any created loops.  However, loops are important: for example, even if a graph has no loops, its dual can easily have loops.  So working only with loopless graphs is not a reasonable option.
The issues with edge contraction and deletion were resolved by \bollobas. and Riordan \cite{BR02} by working with ribbon graphs.  The significance of their definitions became clear with the introduction of partial duality by Chmutov \cite{Chm09}, since these definitions interact in very natural ways with partial duality.

Working with ribbon graphs, deletion of an edge $e$ in a ribbon graph $R$ always results in a new ribbon graph $R\dt e$, and the graph embedding corresponding to $R\dt e$ is automatically cellular.  Contraction of edges can then be defined as the dual operation to deletion: $R \ct e = (R^* \dt e)^*$.
For a graph $G$ cellularly embedded in a surface, this means that to form $G\dt e$ by deleting $e$ we use removal for a colink, and for a coloop we use removal and then cellularize the embedding.
Contraction to obtain $G\ct e$ is quotienting for a link or twisted loop, and for an untwisted loop we quotient and then dismantle the resulting pinchpoint, replacing the vertex incident with the loop by two new vertices.  Lemma \ref{curvelift} allows us to uniquely attach the half-edges incident with $v$ (other than the halves of $e$) to one of these new vertices.
We will call these operations the \emph{BR minor} operations.  Later we will contrast these operations with quotient-based operations.  The definitions of BR edge deletion and contraction are simple for ribbon graphs, but more complicated for actual embeddings.  For ribbon graphs, deletion is the natural operation and contraction is defined as the dual of deletion.
BR deletion of an edge means that we delete the edge in the underlying abstract graph.  However, BR contraction of an edge does not correspond to contracting an edge in the underlying abstract graph if the edge is an untwisted loop.

In terms of gems or jewels, the BR operations also work in a simple consistent way (which is to be expected, since gems are closely related to ribbon graphs).
Deleting $e$ corresponds to smoothing the e-square corresponding to $e$ along color $\cv$.
Contracting $e$ is similar, but we smooth along $\cf$.

In a jewel we can perform a third minor operation involving $e$ (see for example \cite[Subsection 4.3.1]{EM13}).  We can smooth along $\cz$ in the e-square corresponding to $e$.
We will call this operation \emph{twist-contraction} because when applied to $e$ we obtain $(G \ppe e) \ct e$, which we abbreviate to $G \tc e$.  It is also sometimes known as \emph{Penrose contraction}.  Since it is a composition of a twist and a contraction, it can also be implemented in other embedding representations.
This operation is self-dual.

We note that for semiedges, the natural operations for deletion (by removal) and contraction (by quotienting) work, and deleting a semiedge is the same as contracting it, or as twist-contracting it.  In a gem or jewel these operations involve smoothing along a color in the semiedge-digon.

Vertex deletion is also considered a minor operation for cellularly embedded graphs.  Deleting a vertex $v$ should be equivalent to first deleting all incident edges and then deleting the resulting isolated vertex, so we need only consider isolated vertices.  In a cellular surface embedding an isolated vertex must be embedded in its own spherical component of the surface, with one incident face.  To delete the vertex, we delete the entire spherical component, which corresponds to deletion of an isolated vertex disk in a ribbon graph, or deletion of a free loop in a gem.

\section{Embeddings in pseudosurfaces and prior work}
\label{sec:embpseudosurf}

In this section we establish goals for a theory of duality and minors for embeddings of graphs in pseudosurfaces, define some necessary concepts, and examine prior work in this area.

\subsection{Goals and obstacles for a theory of duality and minors}
Constructing a duality theory with desirable properties for graphs embedded in surfaces requires restricting the class of embeddings that we are willing to consider.  An arbitrary embedding of a `primal' graph in a surface has a unique dual abstract graph, and moreover the dual graph is embeddable in the same surface.  However, in general the dual embedding is not unique, and the dual embedding of the dual embedding is not necessarily the primal embedding.
However, we would like to have a \emph{true} duality, by which we mean that our set of objects is closed under duality, the dual is unique (up to a natural definition of equivalence), and the dual of the dual is the primal.
To obtain a true dual operation on graph embeddings in surfaces, we need to restrict ourselves to \emph{cellular} embeddings, where each face is homeomorphic to an open disk.

It has been a somewhat vexing problem to try to define a duality theory for pseudosurface embeddings of graphs that has similar properties to the theory for cellular surface embeddings.
If we allow arbitrary embeddings of a graph in a pseudosurface, we cannot even define an abstract dual graph. An edge that goes through several pinchpoints may be part of the boundary of several faces (components of the surface with the embedded graph deleted), not just one or two, so even if we assign a dual vertex to each face, it is not clear which dual vertices a dual edge should join.

The usual solution has been to restrict to embeddings where all pinchpoints correspond to vertices of the graph.  In that case edges do not go through pinchpoints, and all faces are homeomorphic to the interior of a compact surface with boundary.  In this case there is a unique abstract dual graph, which can be embedded in the pseudosurface.
For some purposes such an abstract dual graph is sufficient.  For example, Schluchter and Schroeder \cite{SS17}, also with Rarity \cite{RSS18}, consider self-dual embeddings in pseudosurfaces, where the abstract dual is isomorphic to the original graph.  One of us (J.E.-M.) and Moffatt \cite{EM15} define a Las Vergnas polynomial for graphs embedded in pseudosurfaces using the abstract dual.  For coloring problems, properly coloring the faces of the primal embedding is equivalent to properly coloring the vertices of the abstract dual graph.

If we insist that all faces be open disks, as well as pinchpoints occurring only at vertices, then not only the dual graph, but also its embedding, is unique. However, even in this case the embedding of the dual graph does not belong to our original class of embeddings, where all pinchpoints are vertices of the graph.
So we do not have a true duality theory.

In addition to a true duality theory, we would also like to have a theory that includes the \emph{minor} operations of edge deletion, edge contraction, and (sometimes) vertex deletion.  We would like to be able to apply edge deletion and contraction to all edges, while staying within a well-defined class of objects.  We would also like edge deletion and edge contraction to be dual operations (applying one to the primal corresponds to applying the other to the dual).

 Below we make some formal definitions for embeddings in pseudosurfaces, and then discuss prior efforts to create theories of duality and minors for embeddings of graphs in pseudosurfaces.

\subsection{Definitions for embeddings of graphs in pseudosurfaces}
As with embeddings in surfaces, an embedded graph $G$ in a pseudosurface $\Pi$ is an embedding in the topological sense, i.e., $G \sb \Pi$ is homeomorphic to the $1$-complex representing an abstract underlying graph $\Ga$.
Again, the faces of $G$ are the components of $\Pi-G$ and the set of faces is denoted $F(G)$.

Given a graph embedding $G$ in a pseudosurface $\Pi$, a vertex is \emph{pinched} if it occurs at a pinchpoint, and an edge is \emph{pinched} if its interior contains a pinchpoint.
A vertex $v$ that is not pinched has a neighborhood that is an open disk, so we say $v$ is \emph{cellular}.
If an edge $e$ is not pinched then its interior $e\ir$ has a neighborhood that is an open disk, so we say $e$ is \emph{cellular}.

The situation for faces is more complicated.
As with surface embeddings, a face is \emph{supported} if its closure intersects the embedded graph, and \emph{unsupported} otherwise.
An unsupported face is a component of $\Pi$ that contains no element of $G$.  Unsupported faces are usually of no interest.
A face is \emph{pinched} if it contains a pinchpoint, and \emph{unpinched} otherwise.
An unpinched face is either a connected surface that is a component of $\Pi$, or a face homeomorphic to the interior of a compact connected surface with boundary.
Faces may be any combination of supported or unsupported with pinched or unpinched.
A face is \emph{cellular} if it is homeomorphic to an open disk. Cellular faces are supported and unpinched.

An embedding is \emph{vertex-}, \emph{edge-}, or \emph{face-cellular} if all vertices, edges, or faces, respectively, are cellular.
It is \emph{edge-pinched} if every edge is pinched.
It is \emph{face-supported} or \emph{face-unpinched} if every face is supported or unpinched, respectively.  Face-cellular embeddings are face-supported and face-unpinched.

An embedding of a graph in a pseudosurface is \emph{classic} if it is edge-cellular and face-unpinched, or equivalently if all pinchpoints occur at vertices.  In a classic embedding every unsupported face must be a connected surface.
An embedding is \emph{cellular} if it is edge-cellular and face-cellular, or equivalently classic and face-cellular.
An embedding is cellular and vertex-cellular if and only if it is a cellular embedding in a surface.
 Most prior literature on embeddings of graphs in pseudosurfaces deals with classic embeddings or cellular embeddings.

Sometimes we want to dismantle a pinchpoint of a pseudosurface where there is a vertex of an embedded graph.  In that case each plate of a neighborhood of $v$ receives a new vertex at the location of $v$.  Lemma \ref{curvelift} allows us to attach each half-edge incident with $v$ to a unique new vertex.

We will also use an idea due to Huggett and Moffatt \cite{HM20}, to discuss both their results and other related results. Two pseudosurface embeddings of a given graph are related by \emph{stabilization} if there is a bijection between their sets of supported faces such that if $f_1$ maps to $f_2$ then $f_1$ and $f_2$ have exactly the same collection of boundary walks in the graph.  Given any set of embeddings, this relation forms an equivalence relation on that set, whose equivalence classes are \emph{stabilization classes}.

Our definition of stabilization differs from that of \cite{HM20}, also used in \cite{MT24}.  The definition there is for classic embeddings, and involves a bijection defined on all faces, where $f_1$ can be changed into $f_2$ by adding and removing handles subject to certain restrictions.
However, this has problems: loosely, it does not allow for adding or removing crosscaps, or for adding or removing unsupported faces.
A procedure for constructing a colored ribbon graph from a graph embedded in a pseudosurface is given in \cite[p.~265]{HM20}; two embeddings are supposed to produce the same colored ribbon graph if and only if they are related by stabilization.
Under this procedure the following all have the same colored ribbon graph: (1) a single vertex embedded in a sphere, (2) a single vertex embedded in a projective plane, and (3) a single vertex embedded in one of two spheres. However, adding and removing handles cannot change (1) into (2) because the Euler genera of the surfaces have different parities, and cannot change (1) into (3) because there is no way to add the second sphere.  However, these embeddings are related by stabilization under our definition.
Our definition also applies to embeddings more general than classic embeddings.  It works for embeddings with pinched faces and (if boundary walks are defined appropriately as sequences of half-edges) with semiedges and cross-pinched edges (see Subsection \ref{ss:pseudocellular}).

\subsection{Prior models of duality and minors for pseudosurface embeddings}
\label{ss:prior}
There are some previous works that develop a theory of duality or minors for embeddings of graphs in pseudosurfaces.  We discuss these in some detail, so that we can later explain how they relate to our own framework.
In particular, there are two previous models of true duality for embeddings in pseudosurfaces due to Deneen, Shute, and Thomborson \cite{DST91}.  The first of those was rediscovered in \cite{Dun20}.
The second model of \cite{DST91} is equivalent to models developed independently by Huggett and Moffatt in \cite{HM20}; this corresponds to the quasicellular embeddings we define in Subsection \ref{ss:quasicellular}.

The first model of \cite{DST91} considers classic embeddings in pseudosurfaces where every face is homeomorphic to a \emph{punctured sphere}, a sphere with finitely many holes (finitely many points, or disjoint closed disks, removed).
Each vertex is associated with a set of `origins' which are essentially the rotations in the different plates associated with the vertex.  Faces are called `windows', which may have several `panes' that are their boundary components.
Their duality operation exchanges a vertex with $d$ origins (i.e., of pinchdegree $d$) with a window with $d$ panes (i.e., face that is a punctured sphere with $d$ holes).
They work with oriented pseudosurfaces (although they just refer to them as `orientable') and take care to preserve the global clockwise direction in their duality operation.
This yields a consistent duality theory for embeddings of graphs in oriented pseudosurfaces, but does not extend to nonorientable situations.
Two of us (B.D. and M.E.) \cite[Chapter 6]{Dun20} independently re-invented the construction by Deneen, Shute, and Thomborson that dualizes vertices at pinchpoints to punctured spheres, although we did not realize at the time that it only works for oriented embeddings.
The problem with embeddings that are not oriented is that without a global clockwise direction to force a choice, there are two possible ways to glue a punctured sphere to each of its boundary components, giving many topologically distinct potential duals.

\begin{figure}[ht]
  \hbox to \hsize{\hss%
\newcommand{\figsixseven}[3]{%
\begin{tikzpicture}[scale=0.92]
  \coordinate (e6) at (0,0);
  \coordinate (e7) at (0,3.6);
  \coordinate (e8) at (4.5,3.6);
  \coordinate (e5) at (4.5,0);
  \coordinate (e4) at (5.5392,1.8);
  \coordinate (e1) at (3.5,2.75);
  \coordinate (e2) at (2.65,1.8);
  \coordinate (e3) at (3.5,0.85);   
  \coordinate (H)  at (1.5,1.8);         

  \draw[blk] (-0.3,3.6) -- (e8) -- (e4) -- (e5) -- (e6) -- (e7);
  \draw[blk] (e7) -- ++(140:0.40);
  \draw[blk] (e8) -- ++(80:0.38);
  \draw[blk] (e4) -- ++(0:0.42);
  \draw[blk] (e5) -- ++(-60:0.42);
  \draw[blk] (e5) -- ++(-110:0.36);
  \draw[blk] (e6) -- ++(-130:0.42);
  \draw[blk] (e6) -- ++(-80:0.36);

  \draw[blk] (e1) -- (e2) -- (e3) -- cycle;
  \draw[blk] (e1) -- ++(-106:0.36);
  \draw[blk] (e3) -- ++( 106:0.36);
  \draw[blk] (e2) -- ++(   0:0.36);

  \draw[red] (H) .. controls (1.05,2.70) and (1.30,3.50) .. (1.75,4.05);   
  \draw[red] (H) .. controls (1.05,0.90) and (1.30,0.10) .. (1.70,-0.55);  
  \draw[red] (H) .. controls (0.60,1.86) and (0.20,1.74) .. (-0.35,1.80);  
  \draw[red] plot[smooth,tension=0.8] coordinates
     {(H) (2.30,2.75) (3.50,3.30) (4.50,3.30) (5.45,2.98) (6.20,3.28)};      
  \draw[red] plot[smooth,tension=0.8] coordinates
     {(H) (2.30,0.85) (3.50,0.30) (4.50,0.30) (5.45,0.62) (6.20,0.32)};      
  \draw[red] (H) .. controls (2.10,2.15) and (2.80,2.35) .. (3.15,2.05);
  \draw[red] (H) .. controls (2.10,1.45) and (2.80,1.25) .. (3.15,1.55);

  #1

  \node[uv] at (e6) {}; \node[uv] at (e7) {}; \node[uv] at (e8) {};
  \node[uv] at (e5) {}; \node[uv] at (e4) {}; \node[uv] at (e1) {};
  \node[uv] at (e2) {}; \node[uv] at (e3) {};
  \node[urv, label={above left:$f^*$}] at (H) {};

  \node[vlbl] at (0.68,3.90)  {$e_7$};
  \node[vlbl] at (5.42,2.60)  {$e_8$};
  \node[vlbl] at (5.47,1.00)  {$e_4$};
  \node[vlbl] at (3.63,-0.40) {$e_5$};
  \node[vlbl] at (-0.37,2.17) {$e_6$};
  #3
  \node[vlbl] at (2.944,1.088)  {$e_3$};
\end{tikzpicture}%
}

\newcommand{\colouredarc}{plot[smooth,tension=0.8] coordinates
     {(1.5,1.8) (2.45,2.55) (3.10,2.92) (3.80,3.00) (4.35,2.60) (4.30,2.10) (3.85,1.92) (3.30,1.98)}}

\hbox to 0.95\hsize{%
%
\figsixseven{%
  \draw[tealarc] \colouredarc;
}{Fig 7}{%
  \node[vlbl] at (2.872,2.484)  {$e_2$};
  \node[vlbl] at (3.82,1.55)  {$e_1$};}
\hss
%
\figsixseven{%
  \begin{scope}[cm={1,0,0,-1,(0,3.6)}]   
    \draw[gold] \colouredarc;
  \end{scope}
}{Fig 6}{%
  \node[vlbl] at (2.872,2.484)  {$e_2$};
  \node[vlbl] at (3.82,2.05)  {$e_1$};}
}
    \hss%
  }
  \caption{Different dual embeddings for noncellular face}\label{fig:diffdual}
\end{figure}

One issue with the above construction, even for embeddings in oriented surfaces, is that the dual embedding has a different pseudosurface from the primal embedding.  So it does not provide a natural way to draw the primal and dual graphs simultaneously in the same pseudosurface.
It is possible to embed the underlying graph of the dual embedding into the pseudosurface of the primal embedding, so that each dual edge crosses the corresponding primal edge, but the embedding of the dual graph is in general not unique.  
As a simple example, consider the primal (black) embedding shown in Figure~\ref{fig:diffdual}, with a face $f$ that is an annulus (i.e., cylinder, or sphere with two punctures).  In the dual embedding $f^*$ would be a pinchpoint of pinchdegree $2$, with the cyclic order of the dual edges around $f^*$ being
$(e_1^* e_2^* e_3^*)$ on one plate and $(e_4^* e_5^* e_6^* e_7^* e_8^*)$ on the other.  However, there are different embeddings of the dual (mostly red) graph in the primal surface, two of which appear in the figure.
At left the order of the dual edges around $f^*$ is
$(e_1^* e_2^* e_3^* e_4^* e_5^* e_6^* e_7^* e_8^*)$,
while at right the order is
$(e_2^* e_3^* e_1^* e_4^* e_5^* e_6^* e_7^* e_8^*)$.

Deneen, Shute, and Thomborson \cite[pp.~354--355]{DST91} also briefly consider a second, more general, model of embeddings that includes nonorientable situations, where duality is obtained by exchanging vertices and windows (faces).
However, this model is presented as an abstract data structure. While cellular embeddings of graphs in pseudosurfaces can be represented in this model, they do not discuss whether or how the general objects in this model can be interpreted topologically.
In our terms, they essentially consider a gem, where an origin is a v-gon and a vertex is a set of origins, while a pane is an f-gon and a window (face) is a set of panes.
They describe this in algebraic language, following the approach of Tutte \cite{Tut73,Tut84} for representing embeddings.
Duality exchanges v-gons (origins) with f-gons (panes) and vertices with windows.

A 2015 paper by one of us (J.E.-M.) and Moffatt \cite{EM15} defines deletion and contraction for classic embeddings of graphs in pseudosurfaces.  The idea was to find a complete recursive definition for the Las Vergnas polynomial of embedded graphs.  This is not possible working only with cellular embeddings of graphs in surfaces, so a more general setting, using embeddings in pseudosurfaces, was considered.
The key condition \cite[p.~99]{EM15} is that the minor operations for the embedded graph should correspond to the minor operations for the abstract underlying graph, which as we have noted is not true for the BR minor operations.
The solution given is to use the natural operations of removal for edge deletion and quotienting for edge contraction.  Removal can produce embeddings that are not face-cellular, but the class of embeddings here does not require face-cellularity.
This paper works with an abstract graph dual of the embedded graph, but does not try to define a dual embedding.
The abstract graph dual is determined by the stabilization class of the embedding, and the minor operations used here also preserve the stabilization class, so we can interpret the results of \cite{EM15} as applying to stabilization classes of classic embeddings.

A 2018 paper of Krajewski, Moffatt, and Tanasa \cite{KMT18} sets up a general Hopf algebra framework where a `canonical' Tutte polynomial can be defined in terms of deletion and contraction operations.  A number of examples are considered, three of which are relevant to pseudosurface embeddings.  Note that we simplify the results of \cite{KMT18} slightly: the results apply to the objects we describe up to equivalence relations of a `matroidal' nature (for example, isolated vertices are usually ignored).

The first relevant example \cite[p.~278 and \S4.4]{KMT18} is based on the results from \cite{EM15} for Las Vergnas polynomials of classic embeddings in pseudosurfaces, up to stabilization.
The second and third examples are phrased in terms of vertex-partitioned graphs.
The second example  \cite[p.~279 and \S4.7]{KMT18} considers extended \bollobas.-Riordan polynomials for vertex-partitioned ribbon graphs, which correspond to cellular embeddings in a pseudosurface.
In terms of pseudosurface embeddings, the deletion and contraction operations discussed are BR deletion and quotienting contraction.
As noted in \cite[Remark 62]{KMT18}, vertex-partitioned ribbon graphs seem to be the natural extended setting for the \bollobas.-Riordan polynomial of ribbon graphs.
The third \cite[p.~280 and \S4.8]{KMT18} considers extended Krushkal polynomials for vertex-partitioned graphs embedded in surfaces (with no face restrictions), up to stabilization.
As pointed out in \cite[p.~11]{MS18}, vertex-partitioned surface embeddings correspond to classic embeddings of graphs in pseudosurfaces.
Considered in terms of pseudosurface embeddings, the deletion and contraction operations are just the natural removal and quotienting, respectively.  So this is essentially the same setting as the first example, based on \cite{EM15}, but the extended Krushkal polynomial provides more information than the Las Vergnas polynomial.

The 2020 paper \cite{HM20} of Huggett and Moffatt investigates both duality and minors for embeddings of graphs in surfaces in a systematic way.
They consider classic embeddings of graphs in pseudosurfaces.
However, they obtain a consistent duality theory by moving to a more general class of objects.  These are represented in two ways: first as colored ribbon graphs, and second as stabilization classes of graph embeddings in pseudosurfaces.
A \emph{colored ribbon graph} assigns colors to the vertices, the boundaries (representing faces), or both, of a ribbon graph.  The colors serve to establish a partition or equivalence relation on the vertices or boundaries.  Duality exchanges vertices with boundaries, with the color of a primal vertex becoming the color of its dual boundary, and vice versa.
Using the direct correspondence between ribbon graphs and gems, this is equivalent to partitioning the set of v-gons, the set of f-gons, or both, in a gem, and duality swaps v-gons and f-gons while preserving the partitions.  In other words, this is the same as the second model of Deneen, Shute, and Thomborson \cite{DST91}, discussed above.

Each face-supported classic graph embedding in a pseudosurface has an associated colored ribbon graph.  We discard unsupported faces, cellularize the faces, and dismantle the vertices to obtain a cellular embedding in a surface, corresponding to a ribbon graph.  Then we partition the vertices according to the original vertices they were split from, and we partition the faces (boundaries) according to the face whose boundary component they capped.
However, many different embeddings of this type correspond to a given colored ribbon graph: these form a stabilization class of face-supported classic embeddings.
This approach is reminiscent of Deneen, Shute, and Thomborson's first model, using punctured spheres as faces: a classic embedding with all faces punctured spheres is just a particularly simple representative of its stabilization class.
Duality here is most easily understood by passing to the associated colored ribbon graphs.

A significant feature of \cite{HM20} is the systematic examination of four possible models for graphs embedded in pseudosurfaces or surfaces.
Starting with classic embeddings, we can add the restriction that they be face-cellular or vertex-cellular or both.
Each model requires different edge deletion and contraction operations, to make sure the class of objects is closed under taking minors.  Each model has its own associated Tutte polynomial defined using its minor operations.
The four models are as follows (with numbering added by us, corresponding to the first four rows of Table \ref{tab:subclass} in Subsection \ref{ss:subclass}).
\begin{enumerate}[(1),nosep]
\item\label{modfour} Vertex-cellular cellular embeddings, i.e., cellular embeddings in surfaces, with BR deletion and BR contraction.  These correspond to (uncolored) ribbon graphs (both vertices and boundaries always have the discrete partition).
\item\label{modtwo} Cellular embeddings in pseudosurfaces, with BR deletion (removal then cellularization of faces) and quotienting for contraction.
These correspond to vertex-colored ribbon graphs (only vertices are colored; boundaries always have the discrete partition).
\item\label{modthree} Vertex-cellular classic embeddings (considered up to stabilization), i.e., not necessarily cellular embeddings in surfaces, with removal for edge deletion and BR contraction (quotienting then dismantling of vertices).  These correspond to boundary-colored ribbon graphs (only boundaries are colored; vertices always have the discrete partition).
\item\label{modone} Classic embeddings in pseudosurfaces (considered up to stabilization), with removal for edge deletion and quotienting for edge contraction.  These correspond to colored ribbon graphs (both vertices and boundaries are colored).
\end{enumerate}
Models \ref{modfour} and \ref{modone} are closed under duality.  Models \ref{modtwo} and \ref{modthree} are dual to each other.  In models \ref{modthree} and \ref{modone}, unsupported faces may exist but must be surfaces, which can be ignored due to stabilization.

The minor operations for each model can also be defined as operations on colored ribbon graphs.  We use BR operations for the ribbon graphs, but then adjust the colorings.  If vertices are colored, as in models \ref{modtwo} and \ref{modone}, an operation that splits a vertex into two vertices assigns both new vertices to the color of the original vertex; an operation that combines two vertices merges their color classes into a new color.  If boundaries (faces) are colored, as in models \ref{modthree} and \ref{modone}, an operation that splits a boundary into two boundaries assigns both new boundaries to the color of the original boundary; an operation that combines two boundaries merges their color classes into a new color.
These definitions are easily translated to gems or jewels with partitions of the v-gons ($\ca\cv$-cycles), and/or of the f-gons ($\ca\cf$-cycles).  We perform the usual deletion or contraction and then modify the partitions (interpreted as colorings) as described.

The derivation and properties of the Tutte polynomials for the four models are discussed in \cite[Sections 3 to 5]{HM20}, in terms of colored ribbon graphs.  The extended Krushkal polynomial from \cite{KMT18} mentioned above (and hence the Las Vergnas polynomial from \cite{EM15}) only depends on the stabilization class of an embedding, and can be related to one of these Tutte polynomials.  The extended \bollobas.-Riordan polynomial from \cite{KMT18} mentioned above can also be related to an appropriate Tutte polynomial.

Moffatt and Thompson \cite{MT24, Thom26} have extended the idea of colored ribbon graphs to \emph{packaged ribbon graphs}, which are colored ribbon graphs in which each color class of vertices or boundaries is assigned a nonnegative integer weight.  Operations on these are defined as for colored ribbon graphs, but with additional rules to handle the weights.  Using packaged ribbon graphs they are able to provide Tutte polynomials with several nice properties for embeddings of graphs in pseudosurfaces, 
which extend Tutte polynomials due to two subsets of Goodall, Krajewski, Litjens, Regts, and Vena \cite{GKRV18,GLRV20} for embeddings of graphs in surfaces.

\section{Quasicellular and pseudocellular embeddings}

In this section we define our main concept, pseudocellular embeddings of graphs in pseudosurfaces. We first define a subclass of pseudocellular embeddings known as quasicellular embeddings, which encompass some of the previous approaches described in Subsection \ref{ss:prior}, and discuss duality and minors for this subclass. Then we define pseudocellular embeddings, and describe how other objects related to graph embeddings, such as edge-point ribbon graphs and cogs (cyclically ordered graphs) also correspond to subclasses of pseudocellular embeddings.

\subsection{Quasicellular embeddings}
\label{ss:quasicellular}
Previous attempts at defining duality for embeddings in pseudosurfaces \cite{DST91,HM20} have been successful to some extent.  However, they either work only for oriented surfaces, or have to extend embeddings in pseudosurfaces to more abstract structures: the abstract data structures of the second model in \cite{DST91}, or colored ribbon graphs and stabilization classes of embeddings in \cite{HM20}.
They do not provide an interpretation of the dual of an embedding as a unique embedding in the same pseudosurface as the primal embedding.
It turns out that it is not difficult to define a class of embeddings of graphs in pseudosurfaces, including cellular embeddings, where duality can be implemented to give a unique embedding in the same pseudosurface.  The barrier to doing this seems to have been the fixation on the idea that only classic embeddings, with all pinchpoints at vertices, should be considered.

Since we want our class of embeddings to include all cellular pseudosurface embeddings, let us consider what happens if we try to take the dual of a cellular embedding.  As previously mentioned, there is a unique dual graph with a unique (up to homeomorphism preserving the primal graph) embedding in the primal pseudosurface.
The only issue is that we may now have faces that contain a single pinchpoint at the location corresponding to a primal vertex $v$, i.e., the face $v^*$ is a multidisk neighborhood of $v$.  If we want a dual that is embedded in the primal pseudosurface, we should just accept that such faces are allowed.  If we reverse the dual operation, a multidisk face $f$ gives a vertex located at the center $f^*$ of the face, i.e., at the pinchpoint, if there is one.  Since a face should have a unique dual vertex, we do not want to allow faces that contain more than one pinchpoint.  Thus, we say a face is \emph{pseudocellular} if it is homeomorphic to a multidisk, and an embedding is \emph{face-pseudocellular} if every face is pseudocellular.  Face-pseudocellular embeddings are face-supported.

We now define some terminology that will be useful later.  Suppose we have a pseudocellular face $f$ in an embedding $G$ in a pseudosurface $\Pi$ created by $q : \Si \to \Pi$.
Then $f$ is a $d$-disk for some $d \ge 1$, and $q\iv(f)$ is a union of disjoint open disks $D_1, D_2, \dots, D_d \sb \Si$. We call the sets $P_i = q(D_i)$ the \emph{plates of $f$}, or in general \emph{face plates}.  We have $P_i \cap P_j = \{f\du\}$ for $i \ne j$.

An embedding is \emph{quasicellular} if it is face-pseudocellular and edge-cellular.
Quasicellular embeddings of graphs in pseudosurfaces are then in one-to-one correspondence with the abstract data structures in the second model of \cite{DST91}, and with the colored ribbon graphs or stabilization classes of classic embeddings from \cite{HM20}.
We can start with a quasicellular embedding, dismantle both the vertices and the centers of the faces to obtain a cellular embedding, take its ribbon graph or gem, and find the appropriate partition of vertices/v-gons and faces (boundaries)/f-gons based on what was dismantled to give a particular cellular vertex or face.  This gives a colored ribbon graph or an abstract data structure from the second model in \cite{DST91}.
Or starting with such an embedding, we can replace each pinched face with a surface with boundary, giving a classic embedding, and we can take its stabilization class.
Another way to think of this is that if we consider stabilization classes of edge-cellular embeddings in pseudosurfaces, a larger set than classic embeddings because it allows pinchpoints in faces, then the face-pseudocellular embeddings give a unique representative for each stabilization class.  Therefore, considering `arbitrary' (meaning edge-cellular) embeddings up to stabilization is equivalent to considering face-pseudocellular edge-cellular, i.e., quasicellular, embeddings.

The dual of a quasicellular embedding is defined in the obvious way: put a dual vertex at the center of each face, and cross each primal edge $e$ with a dual edge $e^*$ joining the dual vertices representing the faces on either side of $e$.  This is unique up to homeomorphisms that preserve the primal graph.  It is easy to see that this is equivalent to the dual operation for colored ribbon graphs.

In a quasicellular embedding a loop can cross its vertex terminally.  In this case we cannot classify $e$ as twisted or untwisted, and we just say it is a \emph{terminal-crossing loop}.  Other edges can be classified as links, twisted loops or untwisted loops as for cellular embeddings in surfaces.  An edge may also be a \emph{terminal-crossing coloop}, which means it has the same face, but different plates, on its two sides.  Other edges can be classified as colinks, twisted coloops, or untwisted coloops.

For a quasicellular embedding, connectedness of the pseudosurface does not guarantee that the embedded graph is connected.  In fact, we can have any combination of primal connected or disconnected and dual connected or disconnected.

\subsection{Quasicellular minors and Q operations versus BR operations}\label{ss:qbr}
For edge-based minors of quasicellular embeddings, we consider contraction first.  We contract an edge $e$ using the natural quotienting operation, crushing $e$ to a single point.
Since an edge $e$ is either a simple path or a simple closed curve, by Theorem \ref{curvecontract} identifying $e$ to a single point keeps the underlying space a pseudosurface.
Moreover, by Observation \ref{cifo}, the topology at points that do not belong to $e$ essentially does not change, so faces are still pseudocellular, edges other than $e$ remain cellular, and we still have a quasicellular embedding.
This operation acts as edge contraction on the underlying graph.  See Figure~\ref{fig:concelledge}.

\begin{figure}[ht]
  \hbox to \hsize{\hss%

\begin{minipage}[t]{0.48\textwidth}\centering
\begin{tikzpicture}[scale=1.25]
  \narrowgoldband 
  \coordinate (L) at (-1.30,0.482);
  \coordinate (R) at ( 0.50,0.561);

  \maskcone{-1.30}{0.482}{100}{1.10}{0.40}{0.15}
  \drawcone{-1.30}{0.482}{100}{1.10}{0.40}{0.15}{solid}
  \coneaxisup{-1.30}{0.482}{100}{1.10}{0.15}
  \coneray{-1.30}{0.482}{100}{1.10}{0.40}{0.15}{154.87}
  \maskcone{0.50}{0.561}{82}{1.20}{0.44}{0.16}
  \drawcone{0.50}{0.561}{82}{1.20}{0.44}{0.16}{solid}
  \coneaxisup{0.50}{0.561}{82}{1.20}{0.16}
  \drawcone{0.50}{0.561}{-88}{1.15}{0.44}{0.16}{dashed}
  \coneaxisdown{0.50}{0.561}{-88}{1.15}{0.44}{0.16}{0.4133}

  \draw[edgeline] (L) .. controls (-0.75,0.72) and (-0.05,0.76) .. (R);
  \fill (L) circle (2.0pt);
  \fill (R) circle (2.0pt);

  \node[slbl] at (-0.42,0.34) {$e$};
  \node[lbl]  at ( 1.55,1.85) {$G$};
\end{tikzpicture}
\end{minipage}
\hfill
\begin{minipage}[t]{0.48\textwidth}\centering
\begin{tikzpicture}[scale=1.25]
  \narrowgoldband 
  \coordinate (V) at (-0.20,0.573);

  \maskcone{-0.20}{0.573}{118}{1.20}{0.42}{0.16}
  \drawcone{-0.20}{0.573}{118}{1.20}{0.42}{0.16}{solid}
  \coneaxisup{-0.20}{0.573}{118}{1.20}{0.16}
  \coneray{-0.20}{0.573}{118}{1.20}{0.42}{0.16}{154.72}
  \maskcone{-0.20}{0.573}{62}{1.20}{0.42}{0.16}
  \drawcone{-0.20}{0.573}{62}{1.20}{0.42}{0.16}{solid}
  \coneaxisup{-0.20}{0.573}{62}{1.20}{0.16}
  \drawcone{-0.20}{0.573}{-88}{1.20}{0.44}{0.17}{dashed}
  \coneaxisdown{-0.20}{0.573}{-88}{1.20}{0.44}{0.17}{0.4126}

  \fill (V) circle (2.2pt);
  \node[lbl] at (1.15,1.86) {$G/e$};
\end{tikzpicture}
\end{minipage}
    \hss%
  }
  \caption{Contracting a cellular edge}\label{fig:concelledge}
\end{figure}

However, for edge deletion the natural removal operation can cause problems.
It may still create a face (or pinchcomponent of a face) homeomorphic to a cylinder or \mobius. strip, but there is another issue.  Removing a colink can merge two faces, and if both faces are pinched we then have a face with two pinchpoints.
To return to a face-pseudocellular embedding we need to merge these two pinchpoints.  This can be done by contracting (i.e., crushing) the dual edge joining the centers of the two faces.  Contracting the dual edge also reduces a face with a pinchcomponent that is a cylinder or \mobius. strip to a multidisk, so it is a general solution to our problems.
Thus, edge deletion consists of removal followed by contraction of the dual edge.
See Figure~\ref{fig:delcelledge}.
In other words, we define edge deletion as the dual operation to edge contraction: $G\dt e = (G\du \ct e\du)\du$.  This preserves the fact that we have a quasicellular embedding, and acts as edge deletion on the underlying graph.

\begin{figure}[ht]
  \hbox to \hsize{\hss%

\begin{minipage}[t]{0.48\textwidth}\centering
\begin{tikzpicture}[scale=1.25]
  \narrowgoldbanddeep
  \coordinate (L) at (-1.50,0.451);
  \coordinate (R) at ( 0.30,0.570);

  \maskcone{-1.50}{0.451}{100}{1.05}{0.38}{0.14}
  \drawcone{-1.50}{0.451}{100}{1.05}{0.38}{0.14}{solid}
  \maskcone{0.30}{0.570}{80}{1.05}{0.38}{0.14}
  \drawcone{0.30}{0.570}{80}{1.05}{0.38}{0.14}{solid}

  \draw[redline] (L) .. controls (-0.95,0.70) and (-0.25,0.76) .. (R);

  \draw[edgeline] (-0.62,0.28) .. controls (-0.534,0.475) and (-0.520,0.675)
                                .. (-0.58,0.88);
  \blackframe{0}

  \fill[red!65!black] (L) circle (2.3pt);
  \fill[red!65!black] (R) circle (2.3pt);
  \node[slbl] at (-0.38,0.47) {$e$};
  \node[slbl,red!65!black] at (-1.06,0.80) {$e^{*}$};

\end{tikzpicture}
\end{minipage}
\hfill
\begin{minipage}[t]{0.48\textwidth}\centering
\begin{tikzpicture}[scale=1.25]
  \narrowgoldbanddeep
  \coordinate (V) at (-0.40,0.566);

  \maskcone{-0.40}{0.566}{135}{1.15}{0.40}{0.15}
  \drawcone{-0.40}{0.566}{135}{1.15}{0.40}{0.15}{solid}
  \maskcone{-0.40}{0.566}{45}{1.15}{0.40}{0.15}
  \drawcone{-0.40}{0.566}{45}{1.15}{0.40}{0.15}{solid}

  \blackframe{0.17}
  \fill[red!65!black] (V) circle (2.6pt);
\end{tikzpicture}
\end{minipage}
    \hss%
  }
  \caption{Deleting a cellular edge}\label{fig:delcelledge}
\end{figure}

We call our minor operations the \emph{Q minor} operations because they are defined simply by topological quotienting.  The Q minor operations correspond directly to operations on the underlying graph, unlike the BR minor operations discussed in Subsection \ref{ss:minorsurf}.  For the BR operations deletion (in ribbon graphs) is the primary operation, and contraction is defined as the dual of deletion; for the Q operations contraction (in the actual embedding) is the primary operation, and deletion is defined as the dual of contraction.  

It is a routine exercise to follow the correspondence between quasicellular embeddings and ribbon graphs described in Subsection \ref{ss:quasicellular} to check that the Q operations correspond exactly to the edge deletion and contraction operations for colored ribbon graphs in \cite{HM20}, which are equivalent to the minor operations of \cite{EM15} and \cite[Subsections 4.4 and 4.8]{KMT18}.  This can be done in two steps, using the translation of Q minor operations into operations on jewels with v-gon and f-gon partitions, which is described in greater generality in Subsection \ref{ss:pseudocellular} below, and then using the correspondence between jewels with v-gon and f-gon partitions and colored ribbon graphs.

The Q operations do not preserve vertex- or face-cellularity: Q edge contraction can create vertex pinchpoints, and Q edge deletion can create facial pinchpoints.  We need to modify the Q operations if we wish to stay in a vertex-cellular or face-cellular class of embeddings.
To stay in a class of vertex-cellular embeddings, we dismantle any new vertex pinchpoint created by Q edge contraction, which gives BR edge contraction.  To stay in a class of face-cellular embeddings, we dismantle any new facial pinchpoint created by Q edge deletion, which is equivalent to our previous definition of BR edge deletion, namely edge removal followed by cellularization.

Huggett and Moffatt \cite{HM20} provide four embedding models and emphasize that each model needs different minor operations.  
Their four models can be regarded as describing quasicellular embeddings that are vertex-cellular, face-cellular, both, or neither.  Their different minor operations are exactly what we suggest here: use a Q operation when vertex- or face-cellularity does not need to be preserved, and a BR operation when it does.
In the face-cellular case these operations agree with the minors of \cite[Subsection 4.7]{KMT18}.

These rules extend to other operations that we will define later: we can define Q operations involving quotienting, and then define BR operations where we preserve vertex-cellularity by adding a step to dismantle a new vertex pinchpoint, or face-cellularity by adding a step to dismantle a new facial pinchpoint.

So at this point we have a set of objects, quasicellular embeddings of graphs in pseudosurfaces, with a theory of duality and edge-based minors equivalent to the second model of \cite{DST91} and to the models of \cite{HM20}.  Thus, all of the polynomials from \cite{EM15,HM20,KMT18} that we have discussed can be described in terms of these embeddings.
Unlike the models of \cite{DST91,HM20}, our model works with concrete embeddings in pseudosurfaces, the dual of an embedding is defined naturally as an embedding in the same pseudosurface as the primal, and our operations are defined using simple topological operations of quotienting and dual quotienting, with dismantling of pinchpoints in some models.

But we are not finished.  It turns out that quasicellular embeddings can be extended to allow pinchpoints on edges, and that using this extension we can model other objects related to embeddings of graphs.  We address this in the next subsection.

\subsection{Pseudocellular embeddings}\label{ss:pseudocellular}
Quasicellular embeddings are edge-cellular, but we did not justify this particular restriction.  In this subsection we consider how we can relax it to allow pinchpoints in edges.  We still want to be able to dualize vertices to pseudocellular faces, so we still consider only face-pseudocellular embeddings.

We do want some restrictions on pinchpoints contained in the interior of an edge.  To form a dual graph, an edge should be incident with (contained in the closure of) no more than two distinct faces.  If an edge internally skims a pinchpoint $p$, it would generally be incident with at least three faces: two faces in the plate at $p$ in which the edge skims $p$, and one in another plate.  If an edge internally crosses two pinchpoints, $p_1$ then $p_2$, then the three segments of the edge (before $p_1$, between $p_1$ and $p_2$, and after $p_2$) would in general be incident with different faces, so the whole edge would be incident with three or more faces.

This suggests that if we want a class of embeddings of graphs in pseudosurfaces that can be dualized, the only reasonable possibility for a noncellular edge is a \emph{cross-pinched} edge, whose interior contains exactly one pinchpoint of pinchdegree $2$, which the edge crosses.
An edge is \emph{pseudocellular} if it is either cellular or cross-pinched.
The \emph{center} of a pseudocellular edge is the pinchpoint, if it is cross-pinched, and an arbitrary interior point, if it is cellular.
An embedding is \emph{edge-pseudocellular} if every edge is pseudocellular.  Our main class of embeddings in pseudosurfaces is specified as follows.

\begin{definition}\label{pseudocellular}
An embedding of a graph in a pseudosurface is \emph{pseudocellular} if it is face-pseudocellular and edge-pseudocellular.  This means that all faces are homeomorphic to open multidisks, and pinchpoints can occur in three ways: at a vertex, as the only pinchpoint in a given face, or as the only pinchpoint in the interior of a given edge, which must have pinchdegree $2$ and be crossed by the edge.
A quasicellular embedding is precisely an edge-cellular pseudocellular embedding.
(Note that the term was \emph{pseudocellular} was used in \cite[Chapter 6]{Dun20} with a different meaning.)
\end{definition}

There is a useful correspondence between cross-pinched edges and semiedges.  We can easily include semiedges in quasicellular and pseudocellular embeddings by generalizing Subsection \ref{ss:embgraphsurf}.  Removing and crushing a semiedge are equivalent.
Given two semiedges, we obtain a cross-pinched edge by identifying their tips.
Conversely, given a cross-pinched edge, we can obtain two semiedges by dismantling its center.
These inverse operations do not change the faces of the embedding.
When tracing a boundary component of a face, we traverse a cross-pinched edge by following one of its half-edges and returning along the same half-edge, which is exactly what we would do if the half-edge was just a semiedge.
If we dismantle the centers of all cross-pinched edges of a pseudocellular embedding, we obtain a quasicellular embedding with semiedges.

The correspondence between cross-pinched edges and semiedges provides a representation for pseudocellular embeddings using gems (or jewels).  As we saw in Subsection \ref{ss:quasicellular}, quasicellular embeddings can be represented by gems with partitioned v-gons and f-gons.  We can represent pseudocellular embeddings, possibly with semiedges, by gems that also have partitioned semiedge-digons, where each part contains only one or two semiedge-digons.  Parts with one semiedge-digon represent semiedges, and parts with two semiedge-digons represent a cross-pinched edge obtained by identifying the tips of the two semiedges.  Cellular edges are represented by e-squares, as usual.

The representation for gems and jewels can also be translated into the language of colored ribbon graphs.  However, there is another perspective that is sometimes useful.  Edge ribbons in a ribbon graph are attached to part of the boundary of a vertex, and transmit relative orientation information between the vertices at the ends of the edge.  For a cross-pinched edge we still want to specify a location on the boundary of each vertex, but we do not want to transmit orientation information.  We can do this, for example, by using a $1$-dimensional \emph{string} to represent a cross-pinched edge, instead of a $2$-dimensional ribbon (the idea of using strings instead of ribbons comes from \cite[\S3.2.1]{EM13}).  So pseudocellular embeddings can be represented as (face- and vertex-) colored ribbon/string graphs.  We ignore strings when determining boundary components in a ribbon/string graph.

In terms of band decompositions, a pseudocellular embedding corresponds to a band decomposition of a pseudosurface into $0$-, $1$-, and $2$-bands, where all bands are closed multidisks.
The alternating property described in Subsection \ref{ss:repembsurf} holds around each boundary component of each $i$-band.
The $1$-bands are still restricted to have exactly four boundary segments, so they must either be disks with four boundary segments, or $2$-disks where each plate has two boundary segments.

Pseudocellular embeddings are dualized by the same natural procedure we use for cellular surface embeddings and quasicellular embeddings. We put a dual vertex at the center of each face, and cross each primal edge $e$ with a dual edge $e^*$ joining the dual vertices representing the faces on either side of $e$.
Each dual edge $e^*$ crosses $e$ at the center of $e$, which is also designated as the center of $e^*$.
If $e$ is cross-pinched then $e^*$ is also cross-pinched.  The dual is therefore embedded in the same pseudosurface as the primal.  See Figure~\ref{fig:dualcrpnedge} for an example of the dual of a cross-pinched edge: the (black) horizontal primal edge crossing the pinchpoint is dual to the (red) slanted dual edge crossing the pinchpoint.

\begin{figure}[ht]
  \hbox to \hsize{\hss%

\begin{tikzpicture}[scale=1.25]
  \drawconeT{0}{0}{180}{2.40}{1.6805}{0.61}{solid}
  \drawconeT{0}{0}{0}{2.40}{1.6805}{0.61}{solid}

  \coneslice{ 1.31388}{-0.17408}{-73.78}{0.96309}{0.2737}
  \coneslice{-1.31388}{-0.17408}{ 73.78}{0.96309}{0.2737}

  \draw[edgeline] (-0.45000,0) -- (0.45000,0);

  

  \draw[edgeline] (-0.45000,0) -- (-1.79210, 0.13935);
  \draw[edgeline] (-0.45000,0) -- (-2.05993,-1.39513);
  \draw[edgeline] ( 0.45000,0) -- ( 1.80669,-0.39041);
  \draw[edgeline] ( 0.45000,0) -- ( 1.85546, 0.75734);  


  \draw[redline] (-1.82459, 0.55783) -- ( 1.82459,-0.55783);

  \fill (-0.45000,0) circle (2.3pt);
  \fill ( 0.45000,0) circle (2.3pt);
  \fill[red!65!black] (-0.94473, 0.28883) circle (2.3pt);
  \fill[red!65!black] ( 1.07481,-0.32860) circle (2.3pt);
  \fill[red!65!black] (-1.06622,-0.30112) circle (2.3pt);
  \fill[red!65!black] ( 0.97886, 0.04916) circle (2.3pt);

  \node[lbl] at (-2.60,-2.10) {Fig 4.};
\end{tikzpicture}
    \hss%
  }
  \caption{A cross-pinched edge and its dual}\label{fig:dualcrpnedge}
\end{figure}

We can also interpret this dual using semiedges.  Semiedges can be dualized in quasicellular embeddings by generalizing Subsection \ref{ss:dualsurf}.
To dualize a pseudocellular embedding we dismantle centers of cross-pinched edges to obtain a quasicellular embedding with semiedges, take the dual, and then re-identify the tips of each pair $d_1^*, d_2^*$ of dual semiedges that are duals of a pair $d_1, d_2$ of semiedges obtained by dismantling the center of a cross-pinched edge.  We obtain a pseudocellular embedding in the same pseudosurface, because the final re-identifications just reverse the initial dismantlings.

In a pseudocellular embedding we can still classify cellular edges as (co)links, or as twisted, untwisted, or terminal-crossing (co)loops.  However, cross-pinched edges and their duals cannot transmit orientation information, so we must classify them as just (co)links, non-terminal-crossing (co)loops, or terminal-crossing (co)loops.

Edge deletion and edge contraction for pseudocellular embeddings extend the Q minor operations for quasicellular embeddings.  To contract an edge we just crush it to a point, and the result is still a pseudocellular embedding by similar arguments to those used for quasicellular embeddings.  This corresponds to contracting $e$ in the underlying graph.  To delete an edge we remove it and contract (i.e., crush) its dual edge, so that $G\dt e = (G\du\ct e\du)\du$, and again we still have a pseudocellular embedding.  This corresponds to deleting $e$ in the underlying graph.

As with quasicellular embeddings, we can modify these Q operations to obtain BR operations that allow us to preserve vertex- or face-cellularity.  To stay in a class of vertex-cellular pseudocellular embeddings we dismantle any vertex pinchpoints after Q edge contraction.
To stay in a class of face-cellular pseudocellular embeddings we dismantle any pinchpoints at the centers of faces after Q edge deletion.

We can interpret the Q minor operations for a cross-pinched edge $e$ of a pseudocellular embedding $G$ in various ways, but first we define \emph{erasure} of $e$ to mean removing $e$ and then dismantling the pinchpoint at the center of $e$.  By the techniques from Section \ref{sec:topology} this is equivalent to first dismantling the center of $e$, so that $e$ becomes two semiedges, then either removing or crushing the semiedges.  The result, which we denote by $G\er e$, is still a pseudocellular embedding.  We note that $(G\er e)\du = G\du \er e\du$.

For a cross-pinched edge $e$ we can describe contraction or deletion of $e$ by considering semiedges.  To contract $e$, we can dismantle the center of $e$, crush each of the two resulting semiedges into a point (an endvertex of $e$), which has no effect on the overall topology, and then identify the vertices at the ends of $e$ (which may already be equal).  The result is the same as erasing $e$, and then identifying the vertices of $e$.
In a similar way, deleting $e$ corresponds to erasing $e$, and then identifying the centers of the faces to which $e$ was incident (which may already be equal).

There is another operation we can define on a pseudocellular embedding $G$, which turns a cellular edge $e$ into a cross-pinched edge.  To do this we remove the middle third of $e$, turning $e$ into two separate semiedges $e_1, e_2$.  As with removing an edge, this may create a face (or pinchcomponent of a face) that is a cylinder or a \mobius. strip, or a face with two pinchpoints, so we then contract the original dual edge $e^*$, which resolves these possible problems.
Finally we identify the tips of $e_1$ and $e_2$ to form the new version of $e$, crossing a new pinchpoint.  We call this operation \emph{pinching} $e$, and denote it by $G \pn e$; it merges the two faces on either side of $e$ if they are distinct, and always turns $e$ into a cross-pinched coloop.
We can also pinch the dual edge $e\du$ to obtain $(G\du \pn e\du)\du$, which we call \emph{copinching} of $e$, and abbreviate to $G \dpn e$.
This operation merges the two endvertices of $e$ if they are distinct, and turns $e$ into a cross-pinched loop.
For a cellular edge $e$ we have $G \pn e \er e = G\dt e$, and $G \dpn e \er e = G\ct e$.
If we are working in a class of face-cellular or vertex-cellular embeddings, we can also use BR versions of pinching and copinching, respectively, which dismantle any new facial or vertex pinchpoints, respectively.

We briefly describe the operations above in terms of gems or jewels.
Erasure of a cross-pinched edge $e$ means smoothing along any one color in each of the paired semiedge-digons corresponding to $e$.  Each original v-gon and f-gon in the $G$ corresponds to a unique new v-gon or f-gon, so the v-gon and f-gon partitions transfer naturally to the new gem or jewel.

Contraction of an edge $e$ (either cellular or cross-pinched) means smoothing along $\cf$ in the e-square or paired semiedge-digons corresponding to $e$.  The f-gons and their partition transfer naturally to the new gem or jewel.  There are one or two new v-gons that came from two (possibly equal) old v-gons; their classes in the partition are merged if necessary into a single class containing the new v-gon or v-gons.
Deletion of $e$ is similar, but we smooth along $\cv$, and exchange the roles of v-gons and f-gons.

Pinching a cellular edge $e$ means deleting the edges not colored $\cv$ from the e-square corresponding to $e$, and then adding the deleted edges back as edges parallel to the edges of color $\cv$, and pairing the created edge-semidigons in the semiedge-digon partition. 
The v-gons, and their partition, do not change.
The two (possibly equal) original f-gons $F_1, F_2$ through the e-square corresponding to $e$ become two (possibly equal) new f-gons $F_1', F_2'$ through the paired semiedge-digons corresponding to $e$.  If necessary, we merge the classes of the f-gon partition containing $F_1$ and $F_2$, and then replace $F_1, F_2$ by $F_1', F_2'$ to obtain the new f-gon partition.
Copinching is similar, but exchanges the roles of $\cv$ and v-gons with $\cf$ and f-gons.

Vertex deletion is usually also considered a minor operation for embedded graphs, and can be regarded as deletion of all incident edges followed by some operation that eliminates the vertex.  For pseudocellular embeddings, even edgeless graphs can have nontrivial structure, and we discuss vertex deletion and related operations later, in Subsection \ref{ss:edgeless}

\subsection{Subclasses of pseudocellular embeddings}%
\label{ss:subclass}
In this subsection we discuss various subclasses of pseudocellular embeddings, and how some of them correspond to objects previously studied in the literature.  To form the subclasses we can impose restrictions on vertices, faces and edges.
We can consider embeddings that are unrestricted as to vertices and faces, vertex-cellular, face-cellular, or both, giving four possibilities.
We can separately decide whether the embedding should be unrestricted as to edges, edge-cellular, or have all edges cross-pinched.
Using these criteria we can define twelve subclasses, listed in Table \ref{tab:subclass}, where a dash represents no added condition beyond the standard restriction for a pseudocellular embedding.

\begin{table}[ht]
\begingroup
\def\cc{cellular}\def\aa{$-$}\def\pp{cross-pinched}
\begin{tabular}{|c|c|c|c|l|}
\hline
& Vertices & Faces & Edges & Notes \cr
\hline
\noalign{\vskip2pt}
\hline
 (1) & \cc & \cc & \cc & cellular embeddings in surfaces \cr
 (2) & \aa & \cc & \cc & cellular embeddings in pseudosurfaces /\cr
     &     &     &     & \quad vertex-partitioned ribbon graphs
				\cite[\S4.7]{KMT18} /\cr
     &     &     &     & \quad vertex-colored ribbon graphs \cite{HM20} \cr
 (3) & \cc & \aa & \cc & dual of cellular embeddings in pseudosurfaces /\cr
     &     &     &     & \quad boundary-colored ribbon graphs or \cr
     &     &     &     & \quad stabilization classes of arbitrary embeddings\cr
     &     &     &     & \quad in surfaces \cite{HM20} \cr
 (4) & \aa & \aa & \cc & DST 2nd model \cite{DST91} / colored ribbon graphs
				or \cr
     &     &     &     & \quad stabilization classes of classic embeddings\cr
     &     &     &     & \quad in pseudosurfaces \cite{HM20} /\cr
     &     &     &     & \quad quasicellular embeddings in pseudosurfaces \cr
\hline
 (5) & \cc & \cc & \aa & edge-point ribbon graphs \cite{EKM18} \cr
 (6) & \aa & \cc & \aa & \cr
 (7) & \cc & \aa & \aa & \cr
 (8) & \aa & \aa & \aa & pseudocellular embeddings in pseudosurfaces \cr
\hline
 (9) & \cc & \cc & \pp & cogs = rigid-vertex graphs
			\cite{BEEMM25plus,BDJSV15,EM13,Kau89},\cr
     &     &     &     & \quad cogs for (1) and (5)\cr
(10) & \aa & \cc & \pp & extended cogs for (2) and (6)\cr
(11) & \cc & \aa & \pp & extended cogs for (3) and (7)\cr
(12) & \aa & \aa & \pp & extended cogs for (4) and (8)\cr
\hline
\end{tabular}
\endgroup
\bigskip
\caption{Subclasses of pseudocellular embeddings}\label{tab:subclass}
\end{table}

The subclasses (1), (2), (3), (4) of quasicellular embeddings correspond to Huggett and Moffatt's four models (in the same order) discussed in Subsection \ref{ss:prior}.  The results of \cite{EM15} and \cite[\S4.4 and \S4.8]{KMT18} can be regarded as results for stabilization classes of classic embeddings in pseudosurfaces, which is (4), quasicellular embeddings.

Subclass (5) corresponds to the \emph{edge-point ribbon graphs} defined by one of us (J.E.-M.), Kauffman and Moffatt \cite{EKM18}.  They constructed these as ribbon graphs where an edge may be contracted to a point, called a \emph{pinched-vertex}, identifying two points on vertex boundaries, or equivalently, an edge may be designated as a \emph{dark} edge.  However, dark edges may just be represented by strings.  Thus, edge-point ribbon graphs are equivalent to ribbon/string graphs, which correspond to cellular embeddings in surfaces where we can add cross-pinched edges.  In other words, we have embeddings in pseudosurfaces that are vertex- and face-cellular, but edges may be either cellular or cross-pinched, which is (5).  The operation of \emph{contracting an edge to a point} in \cite{EKM18} corresponds to what we have called BR pinching a cellular edge.

Using the correspondence established in the previous paragraph, subclass (9) corresponds to ribbon/string graphs in which all edges are strings.  As discussed in \cite[\S3.2.1]{EM13} or \cite{BEEMM25plus}, such objects correspond to \emph{cyclically ordered graphs} or \emph{cogs}, which are also known as \emph{rigid-vertex graphs}.  A cog is a graph equipped with an undirected cyclic ordering of the half-edges at each vertex.  Every cellular surface embedding has an associated cog, which identifies its equivalence class under partial Petrie duality.
As pseudocellular embeddings, cogs are represented by embeddings in pseudosurfaces where every pinchcomponent is a sphere that contains a single cellular vertex, which we will consider to be at the north pole of the sphere.
These spheres are identified into a pseudosurface at points of pinchdegree $2$, each of which represents the center of a cross-pinched edge, which we think of as being arranged around the equator of each sphere.  The half-edges on each sphere run from the vertex at the north pole to the points representing edge centers on the equator.
Each sphere contains exactly one cellular face of the embedding, and we may consider the dual vertex for this face to be at the south pole.  The dual half-edges run from the dual vertex at the south pole to the points representing edge centers on the equator.
For a cellular embedding in a surface (subclass (1)), the embedding corresponding to its associated cog may be found by BR pinching all of its edges (Q pinching all edges produces the cog, but with all faces in each component identified at a single pinchpoint).

\def\cC{\mathcal{C}}
For subclass (12) we extend (9) by dropping the requirements of vertex- and face-cellularity.  This means that we may identify north pole vertices together according to an arbitrary equivalence relation, and we may also identify south pole dual vertices together according to an arbitrary equivalence relation.
If $G$ is in subclass $(i)$ or $(i+4)$ for $i = 3,4$, then Q pinching all cellular edges of $G$ yields $G'$ in subclass $(i+8)$, which is a subclass of $(i+4)$.
If $G$ is in subclass $(i)$ or $(i+4)$ for $i = 1,2$, then BR pinching all cellular edges of $G$ yields $G'$ in subclass $(i+8)$, which is a subclass of $(i+4)$.
We can therefore consider elements of subclasses (10) to (12) as \emph{extended cogs}.

\section{Additional operations for pseudocellular embeddings}
\label{sec:addop}

In this section we define some further operations for pseudocellular embeddings that act on or eliminate edges, and eliminate faces and vertices.  The reader may wish to refer to Table \ref{tab:opsumm} in Section \ref{sec:conclusion} to get an overview of the operations we will cover.

\subsection{Twisted duality for pseudocellular embeddings}\label{ss:twdualpsemb}
As mentioned in Subsection \ref{ss:twdualsurf}, cellular embeddings of graphs in surfaces have three types of duality, and partial versions of these.  Here we briefly discuss some situations where we can implement partial dualities for pseudocellular embeddings.

We begin by considering Petrie duality, and specifically partial Petrie duality (twisting) for a single edge $e$ in a pseudocellular embedding $G$.
First consider a cellular edge $e$.  Twisting an edge $e$ of a cellular embedding in a surface can be interpreted topologically as adding a crosscap at the center of $e$, through which $e$ now passes, and then cellularizing faces if necessary.  The corresponding operation for a cellular edge $e$ of a pseudocellular embedding is as follows.  First add a crosscap  at the center of $e$, and make both $e$ and $e^*$ pass through the crosscap, so that they no longer intersect at the center of $e$.  This may create the same issues as when we remove an edge as the first step in an edge deletion, so we solve it in the same way, by crushing the dual edge. (We now get a new dual edge $e^*$, crossing $e$ at its center, in the new embedding.)  This is a \emph{Q edge twist} operation and we denote the result by $G \ppe e$.

For cross-pinched edges, since they do not transmit orientation information, it might make sense to define $G \ppe e = G$.  However, twisting a cellular edge has the side effect of merging the faces incident with the edge.  For consistency (justified further in Subsection \ref{ss:addop}) we therefore define the Q twist $G \ppe e$ for a cross-pinched edge $e$ as identification of the centers of the faces incident with $e$, which we refer to as \emph{coloopifying} $e$.

For all edges $e$, the underlying graph $\Ga$ of $G$ and $G \ppe e$ is the same.
For distinct edges $e_1$ and $e_2$, $G \ppe e_1 \ppe e_2 = G \ppe e_2 \ppe e_1$ (see Lemma \ref{lem:opcomm} below).  This means that $G \ppe A$ is well defined for sets of edges $A$.

The problem is that $G \mapsto G \ppe e$ is not in general an involution, considered as a map on all pseudocellular embeddings of graphs with a fixed edge set $E$.  This map never increases the number of faces, but sometimes decreases the number, so a decrease cannot be reversed.  In fact, this map is not even injective. Suppose $G$ is a cellular embedding in a surface, and $e$ is a colink with faces $f_1, f_2$ on its sides.  Twisting $e$ gives $G\ppe e$ in which $f_1$ and $f_2$ are merged into a single face $f$; i.e., the two dual vertices $f_1\du$ and $f_2\du$ become a single vertex $f\du$.
Now suppose $H$ is another cellular surface embedding with a face $g$.  Form two pseudocellular embeddings, constructing $G_1$ from $G$ and $H$ by identifying $f_1\du$ with $g\du$, and $G_2$ from $G$ and $H$ by identifying $f_2\du$ with $g\du$.  Then $G_1 \ppe e$ and $G_2 \ppe e$ are equal: both are obtained from $G \ppe e$ and $H$ by identifying $f\du$ with $g\du$.  Thus, the map is not injective.

However, if we are working with face-cellular graphs, we can perform a Q edge twist then dismantle any new facial pinchpoint, resulting in a \emph{BR edge twist} operation.  For cellular embeddings in surfaces, the BR edge twist is just the usual partial Petrie dual with respect to $e$.
The BR edge twist can be described for both cross-pinched and cellular edges using jewels.
For a face-cellular embedding we have a v-gon partition and a semiedge-digon partition, and the discrete f-gon partition (equivalently, no f-gon partition).  Twisting an edge $e$ means swapping colors $\cf$ and $\cz$ in the e-square or paired semiedge-digons corresponding to $e$.  The v-gon partition does not change.  For a Q edge twist we would merge the parts of the f-gon partition associated with $e$, but for the BR edge twist we then undo this by dismantling any facial pinchpoint that was created; in other words, the f-gon partition remains the discrete partition.
The set of semiedge-digons and the semiedge-digon partition remain the same.
For a cross-pinched edge, a BR edge twist does not change the embedding.
It is clear that the BR edge twist operation is an involution, so we can call it a partial Petrie dual.
As in the top part of Table \ref{tab:effpd}, this partial Petrie duality can both combine and split faces.
Thus, we can define partial Petrie duals $G \ppe A$ for sets of edges $A$ in face-cellular pseudocellular embeddings, which stay in this class.

Similarly, we can define a \emph{Q edge cotwist} operation by $G \pwi e = (G\du \ppe e^*)\du$, where $\ppe$ represents a Q edge twist.  
For cellular $e$ this corresponds to putting a crosscap at the center of $e$, through which $e$ and $e^*$ pass, crushing $e$, and replacing $e$ by the dual of $e^*$ in the new embedding.
For cross-pinched $e$ this corresponds to identifying the endvertices of $e$, which we refer to as \emph{loopifying} $e$.
Again this is not an involution: we can reduce the number vertices but not increase it.  However, if we are working with vertex-cellular embeddings we can perform a Q edge cotwist and then dismantle any new vertex pinchpoint, giving a \emph{BR edge cotwist} operation, which is an involution, and for cellular embeddings in surfaces it agrees with the partial Wilson dual.  So we can call this a partial Wilson dual.
As in the middle part of Table \ref{tab:effpd}, this partial Wilson duality can both combine and split vertices.
Thus, we can define partial Wilson duals $G \pwi A$ for sets of edges $A$ in vertex-cellular pseudocellular embeddings, which stay in this class.

If we work with pseudocellular embeddings that are both vertex- and face-cellular, then we can define BR partial geometric duals (Chmutov partial duals) by $G \pdu A = G \ppe \pwi \ppe A$, or equivalently $G \pwi \ppe \pwi A$, for sets of edges $A$ (when $A$ is a set of cross-pinched edges, $G \pdu A = G$).  This agrees with the usual partial duality for cellular embeddings in surfaces.  As in the bottom part of Table \ref{tab:effpd}, this partial duality can combine and split both vertices and faces, and is an involution.

Thus, for vertex-cellular face-cellular pseudocellular embeddings with edge set $E$ we have a complete $S_3^E$-action generated by the three partial duality operations.  This action corresponds to permuting the colors $\cv$, $\cf$ and $\cz$ in a jewel with a semiedge-digon partition.  These embeddings are cellular embeddings in surfaces with added cross-pinched edges.
As we discussed in Subsection \ref{ss:subclass}, these embeddings are equivalent to the edge-point ribbon graphs of one of us (J.E.-M.), Kauffman and Moffatt \cite{EKM18}.
In this setting we also have three minor operations (BR edge deletion, BR edge contraction, and BR twist-contraction (see Subsection \ref{ss:addop})), which interact with the partial dualities in the expected way.
This is a moderate extension to the twisted duality framework for cellular embeddings in surfaces.

We believe there are also other situations where we can find a complete $S_3^E$-action and appropriate minor operations, but we leave a complete investigation of this for future research.

\subsection{Further edge elimination operations}\label{ss:addop}
In this subsection we discuss some additional edge-elimination operations that we will use in constructing a family of polynomial invariants in Section \ref{sec:poly}.
Recall that \emph{elimination} is a non-specific term for any operation that gets rid of an edge, in contrast to specific operations such as deletion or contraction.

For cellular embeddings in surfaces we have three minor operations: deletion $\dt$, contraction $\ct$, and twist-contraction $\tc$, where $G \tc e = G \ppe e \ct e$, using BR operations.  For pseudocellular embeddings we define \emph{Q twist-contraction} by $G \tc e = G \ppe e \ct e$, using Q operations.
For cellular $e$ this means that we put a crosscap at the center of $e$, through which $e$ and $e^*$ now pass without intersecting, and crush both $e$ and $e^*$.
For cross-pinched $e$ we erase $e$ and identify the endvertices of $e$ and also identify the centers of the faces incident with $e$ (the endvertices of $e^*$).
Both of these descriptions are symmetric in $e$ and $e^*$, so $G \tc e$ is a self-dual operation.
We can interpret twist-contraction in a jewel with v-gon, f-gon and semiedge-digon partitions.
First smooth the subgraph $S_e$ (e-square or paired semiedge-digons) corresponding to $e$ along $\cz$.
Merge the parts of the v-gon partition containing the two (possibly equal) v-gons that used the $\cv$-colored edges of $S_e$, and also add the new v-gon(s) to this part.  Similarly, merge the parts of the f-gon partition containing the two (possibly equal) f-gons that used the $\cf$-colored edges of $S_e$, and also add the new f-gon(s) to this part.  In a class of vertex-cellular face-cellular embeddings we can perform BR twist-contraction in a jewel without v-gon or f-gon partitions by just smoothing $S_e$ along $\cz$.

We can now justify the definition we made in Subsection \ref{ss:twdualpsemb} for $G \ppe e$ when $e$ is a cross-pinched edge.  For a cellular edge $e_1$, our definition of $G \ppe e_1$ is essentially forced by generalizing the definition for cellular embeddings in surfaces and maintaining pseudocellularity, and then $G \tc e_1 = G \ppe e_1 \ct e_1$ turns out to be a self-dual operation.  However, for a cross-pinched edge $e_2$ we have some freedom in defining $G \ppe e_2$, which we resolve by requiring that $G \tc e_2 = G\ppe e_2 \ct e_2$ be self-dual as well.  The operation $G\tc e_2 = G \ppe e_2 \ct e_2$ identifies the endvertices of $e_2$, but if it is self-dual it should also identify the centers of the faces incident with $e_2$, which `$\ct e_2$' does not do.  Therefore, we need to define `$\ppe e_2$' for a cross-pinched edge $e_2$ so that it identifies the centers of the faces incident with $e_2$.  
The operation $G \tc e_2 = G \ppe e_2 \ct e_2$ is then self-dual, as required.
Self-duality gives $G \tc e = G \ppe e \ct e = G \pwi e \dt e$ for all edges $e$.

Our quotient-based edge contraction and deletion operations identify vertices or face centers only as needed to preserve pseudocellularity of an embedding $G$. But in some cases we can make additional optional identifications.
When we contract an edge $e$, we always identify its two endvertices, but we do not modify its incident faces.  We can optionally identify the centers of the two incident faces after contraction.  We call this operation \emph{supercontraction} and denote it by $G \sct e$.
Similarly, we can define the dual operation \emph{superdeletion}, denoted by $G \sdt e$, by identifying the endvertices of an edge after deleting it.
If $e$ is cellular, then $G\sct e = G \dpn e \dt e$, and $G \sdt e = G \pn e \ct e$.  
At this point we have five distinct operations for eliminating cellular edges: $G \ct e$, $G \dt e$, $G \tc e$, $G \sct e$, $G \sdt e$.

We also currently have four distinct operations for eliminating cross-pinched edges.
If $e$ is a cross-pinched edge, then $G \tc e = G \sct e = G \sdt e$: in all three cases we erase $e$ and then identify the two endvertices of $e$ and also identify the centers of the two faces incident with $e$.
So the four operations are $G \ct e$, $G \dt e$, $G\tc e$ ($=G \sct e =G\sdt e$), and $G \er e$.  We will define a fifth operation for cross-pinched edges, but our operation produces two possibly distinct embeddings, so to keep track of both of these simultaneously we work with formal $\mR$-linear combinations of edge-labeled pseudocellular embeddings.  It is easiest to define this operation using gems or jewels.
Suppose $e$ is a cross-pinched edge, and we have a gem or jewel with v-gon, f-gon and semiedge-digon partitions.
Then there is a pair of semiedge-digons $D^1_e$, $D^2_e$ corresponding to $e$, say one with vertices $u_1, u_2$ and the other with vertices $v_1, v_2$.  To form one of the two embeddings, delete all semiedge-digon edges between $u_1$ and $u_2$ and between $v_1$ and $v_2$, so that $u_1, u_2, v_1, v_2$ all have degree $1$.  Now add edges $u_1 v_1$ and $u_2 v_2$ of color $\cz$, and then smooth these edges along $\cz$.
Merge the parts of the v-gon partition containing the two (possibly equal) v-gons that used the $\cv$-colored edges of $D^1_e$ and $D^2_e$, and also add the new v-gon(s) to this part.  Similarly, merge the parts of the f-gon partition containing the two (possibly equal) f-gons that used the $\cf$-colored edges of $D^1_e$ and $D^2_e$, and also add the new f-gon(s) to this part.
The result represents a new pseudocellular embedding, say $G_1$.  But the labeling of $v_1$ and $v_2$ was arbitrary, so we could also perform this procedure using edges $u_1 v_2, u_2 v_1$, to obtain a second pseudocellular embedding $G_2$.  We define $G \sw e$, which we say is obtained by \emph{swirling} $e$, to be the linear combination $\frac12 G_1 + \frac12 G_2$.  This is a modified version of an operation used for cogs in \cite[Section 4]{BEEMM25plus}; our modified form normalizes by including the factor $\frac12$.

A small case analysis shows that the above ten operations, five for cellular edges and five for cross-pinched edges, are the only natural operations for eliminating an edge from a general pseudocellular embedding to produce another such embedding.  In terms of the jewel, a `natural' operation on $e$ should smooth two independent edges of the edge-colored subgraph $S_e$ (e-square or paired semiedge-digons) corresponding to $e$, where this pair of edges is invariant under symmetries of $S_e$.  Some mergers of parts of the v-gon or f-gon partitions may be forced by this, and we can optionally add unforced mergers of the parts that intersect $S_e$.  All ten operations except swirling fit this pattern.  If we allow adding edges not already in $S_e$ then we also obtain swirling, although to maintain invariance under symmetries of $S_e$ we add two separate pairs of independent edges, and average the results.

\begin{lemma}\label{lem:opcomm}
The ten edge elimination operations all commute, in the sense that 
we have $G \,o_1\, e_1 \,o_2\, e_2 = G \,o_2\, e_2 \,o_1\, e_1$ whenever $e_1, e_2$ are distinct edges and $o_1, o_2$ are (not necessarily distinct) operations such that $o_1$ can be applied to $e_1$ and $o_2$ can be applied to $e_2$.
\end{lemma}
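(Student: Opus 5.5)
The plan is to work entirely in the jewel representation with v-gon, f-gon and semiedge-digon partitions. Each of the ten operations has been described there as a purely local modification of the edge-colored subgraph $S_e$ (the e-square or the paired semiedge-digons of $e$), followed by an update of the partitions. This lets us avoid the topology: by Lemma \ref{qp}, quotient constructions depend only on the induced equivalence relation. So it suffices to show that the two orders produce the same jewel and the same three partitions. Swirling produces a formal combination, so we extend every operation $\mR$-linearly. Commutation for swirling then means that both orders give the same weighted sum of jewels.

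\textbf{Step 1 (the underlying jewel).} Every operation modifies only edges inside $S_{e_i}$, or adds edges between vertices of $S_{e_i}$ (for swirling), and then performs a smoothing. Since $e_1\ne e_2$, the subgraphs $S_{e_1}$ and $S_{e_2}$ are vertex-disjoint. Smoothing replaces maximal $\ca$-colored paths by single $\ca$-edges and $\ca$-cycles by free loops, and the result of smoothing a set of subgraphs is independent of order. The reason is that the final graph is obtained by contracting maximal $\ca$-paths in the graph where both $S_{e_1}$ and $S_{e_2}$ have been modified, and this final contraction is well defined. The $\ca$-edges created by smoothing $S_{e_1}$ are exactly the ones that later pass through $S_{e_2}$, and that subgraph is untouched by the first operation. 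So $S_{e_2}$ survives the first operation intact, with its vertices and colors preserved, and the second operation is still applicable. For swirling, the two labelings $\{u_1v_1,u_2v_2\}$ and $\{u_1v_2,u_2v_1\}$ at $e_1$ and at $e_2$ give four jewels, each with weight $\frac14$, in either order, by bilinearity.

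\textbf{Step 2 (partitions).} Each operation updates a partition by a rule of the form: take the parts meeting a specified set of gons through $S_e$, merge them (possibly merging nothing), and substitute the new gons for the old ones. Erasure substitutes only; contraction and deletion merge on one side; twist-contraction, supercontraction, superdeletion and swirling merge on both sides; the Q twist of a cross-pinched edge coloopifies. I would phrase each rule as follows. The partition induced on the final gons is the finest partition such that (a) two final gons are related when they came, through the smoothing history, from old gons in the same old part, and (b) any forced or optional merges at $e$ are included. Here a final gon ``comes from'' an old gon if it shares an $\ca$-edge segment with it outside $S_e$. Both conditions are unions of equivalence relations generated symmetrically by $e_1$ and $e_2$. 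Their transitive closure is therefore the same in either order, once we check that the gons through $S_{e_2}$ after the first operation are exactly the descendants of the gons through $S_{e_2}$ before it.

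The main obstacle is this last bookkeeping point. A single gon may pass through both $S_{e_1}$ and $S_{e_2}$ and be split or merged by the first operation. We must confirm that the merges triggered at $e_2$ afterwards involve the same classes, and that the ``also add the new gons to this part'' clause does not depend on order. I would handle this by tracking $\ca$-edges outside $S_{e_1}\cup S_{e_2}$, which persist under both operations. Each gon, whether old, intermediate or final, is identified with a set of such segments, and ancestry becomes a relation on these sets. With this identification, the merges at $e_i$ are described by the segments incident with $S_{e_i}$, and the symmetric closure argument goes through. Finally, Lemma \ref{qp}(2) shows that equal jewels and partitions give the same pseudocellular embedding up to homeomorphism.
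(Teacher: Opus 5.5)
Your proposal is correct and shares the paper's basic strategy: everything is done in the jewel, each operation is local to $S_e$, and the path-contraction step (2) of smoothing is deferred to the end. Where you differ is in how the v-gon and f-gon partitions are handled.

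The paper never performs step (2) at all. It encodes each partition by adding edges of new colors $\cvv$, $\cff$ on the fixed vertex set $V(J)$, so a part is just a component of the subgraph spanned by colors $\ca,\cv,\cvv$ (resp.\ $\ca,\cf,\cff$). Every operation, including its forced and optional merges, then becomes a replacement of edges inside $V(S_e)$ by edges inside $V(S_e)$. Commutation for disjoint $V(S_{e_1})$, $V(S_{e_2})$ is then immediate, with no ancestry bookkeeping.

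Your route keeps gons as the objects and describes the updated partition as a transitive closure of ancestry plus local merges. This is more explicit about what the partition rules do, but it is exactly what forces you into the bookkeeping you flag as the main obstacle.

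One point to tighten there. It is not literally true that the gons through $S_{e_2}$ after the first operation are exactly the descendants of those through $S_{e_2}$ before it: if $e_1$ splits a gon passing through both $S_{e_1}$ and $S_{e_2}$, one piece may miss $S_{e_2}$. What you actually need holds by a check of the ten rules: whenever smoothing at $e$ fails to transfer gons one-to-one on a given side, the rule puts all new gons on that side into the single class obtained by merging the old classes through $S_e$. Hence ``lying in a common intermediate gon'' is already implied by the original partition together with the local merges at $e_1$ and $e_2$. Those merges can be indexed by the original $\ca$-edges at $V(S_{e_i})$, which persist, so the closure is order-independent. The paper's $\cvv/\cff$ encoding builds this in automatically.

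Also, the reduction to jewels rests on the paper's correspondence between pseudocellular embeddings and jewels with partitions, not on Lemma \ref{qp}.
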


\begin{proof}
We outline a proof.  Suppose $G$ has jewel $J$ with v-gon, f-gon, and semiedge-digon partitions.  We can track the edge elimination operations on a graph $J^+$ with $V(J^+)=V(J)$ by ignoring step (2) of each smoothing operation as defined in Subsection \ref{ss:repembsurf}.  Initially $J^+=J$, but we then replace the v-gon and f-gon partitions by using edges of new colors $\cvv, \cff$, where we put a connected subgraph of edges of color $\cvv$ on each subset of $V(J)$ corresponding to a part of the v-gon partition, and similarly with edges of color $\cff$ for the f-gon partition.  In $J^+$ a v-gon is a cycle whose edges are colored $\ca$ and $\cv$, but after performing some operations such a cycle may have more edges of color $\ca$ than of $\cv$.  A part of the v-gon partition corresponds to a component of the subgraph of $J^+$ induced by edges of colors $\ca$, $\cv$, and $\cvv$.  The f-gons and f-gon partition are defined similarly using edges of colors $\ca$, $\cf$, and $\cff$.  Now each operation involving an edge $e$ is implemented by replacing all original edges of $S_e$ of colors $\cv, \cf, \cz$ by two independent edges of color $\ca$ between vertices of $S_e$, and any required mergers of parts of the v-gon or f-gon partition can be implemented by adding edges of color $\cvv$ or $\cff$, respectively, between vertices of $S_e$.  Each operation therefore only affects the subgraph induced by $V(S_e)$.  Thus, operations on distinct edges $e_1$ and $e_2$ commute.  (We have ignored complications due to swirling, but those can be handled.)
\end{proof}

We will use these ten operations to define a family of polynomial invariants in the next section.

\subsection{Vertex and face elimination and edgeless embeddings}\label{ss:edgeless}%
We now discuss ways to eliminate vertices and faces from a pseudocellular embedding.
In addition to edge deletion and contraction, vertex deletion is also considered a minor operation for abstract graphs.
Vertex deletion really only needs to be applied to isolated vertices (with no incident edges), as deletion of a general vertex $v$ can be implemented by deleting all edges incident with $v$ and then deleting the now-isolated $v$.
To delete a vertex $v$ in a ribbon graph we can remove all edges incident with $v$, leaving $v$ represented by an isolated vertex disk with one boundary component, representing a single face $f$ incident with $v$, and then remove the disk for $v$, implicitly also removing $f$.
For a cellular embedding $G$ of a graph in a surface, this corresponds to removing $v$ and all of its incident edges from the embedding, and then cellularizing; the result is denoted by $G \dt v$.

For ribbon graphs or cellular embeddings in surfaces there is also an operation dual to vertex deletion, which we call \emph{face contraction}: for a face $f$ we define $G \ct f = (G\du \dt f\du)\du$.
For a cellular embedding of a graph in a surface we can implement this as follows.  First contract all edges incident with $f$: this is equivalent to crushing the boundary $\bdy f$ (all vertices and edges with $f$) to a vertex $v$.  The only possible pinchpoint is $v$ and we dismantle $v$ to form a surface embedding.  Now $f$ is an \emph{isolated face}, meaning that its boundary contains no edges (equivalently, there are no dual edges incident with $f\du$).  So there is a component containing $f$ and a copy of $v$, and we discard this component.  This is equivalent to crushing the closure of the face $f$ to a single point $v$, and either discarding $v$ if it is a single-point component of the resulting space, or dismantling $v$ otherwise.

For pseudocellular embeddings the situation is more complicated: there are two ways to eliminate a vertex $v$, a weak way and a strong way, which are the same if $v$ is cellular.  The weak operation leaves more of the structure of the original embedding intact than the strong operation.  Similarly, there are two ways to eliminate a face $f$, which are the same if $f$ is cellular.  In both cases we will call the weak elimination \emph{deletion} and the strong elimination \emph{contraction}: vertex deletion is dual to face deletion, and vertex contraction is dual to face contraction.  This seems odd, but the terminology agrees with intuitive ideas of `deletion' as removal and `contraction' as crushing.

Suppose now that $f$ is an isolated face, so that the boundary of each $P_i$ in $\Pi$ is a vertex $v_i$ (these vertices are not necessarily distinct).  For the \emph{face deletion} of $f$, we take $\Pi-f$, and discard any points $v_i$ that are single-point components of $\Pi-f$; the result is denoted $G \dt f$.
Single-point components must be deleted or we no longer have a pseudosurface.
For the \emph{face contraction} of $f$, we take $G \dt f$, and if any points $v_i$ remain we identify them all to a single point $f\du$; the result is denoted $G \ct f$.  This is equivalent to crushing the closure of $f$ to the single point $f\du$, and discarding $f\du$ if it is a single-point component of the resulting space.  If $f$ is not an isolated face, for either $G \dt f$ or $G \ct f$ we first contract all edges incident with $f$, making $f$ an isolated face, and then perform the appropriate operation described above.  For a vertex $v$, we then define $G\dt v$ as $(G\du \dt v\du)\du$ and $G \ct v$ as $(G\du \ct v\du)\du$.  For both vertices and faces, contraction potentially merges more points than deletion.

Pseudocellular embeddings have nontrivial structure even if they have no edges, and we can apply the above operations to edgeless pseudocellular embeddings.  The structure is represented by a \emph{bipartitioned graph}, meaning a bipartite graph with a specific ordered bipartition.  Here we briefly describe some ideas due to the first two authors as extensions of \cite[Chapter 6]{Dun20}.
A pseudocellular embedding $G$ of an edgeless graph $\Ga = \overline{K_n}$ has an underlying pseudosurface $\Pi$ where each pinchcomponent is a sphere $S$ that contains one vertex $v$ and one face center $f^*$; $S-\{v\}$ is a plate of the face $f$.  Thus, the structure can be represented by a bipartitioned graph $H$ (possibly with parallel edges), where $V(H)$ is partitioned into $V(G)$ and $F^*(G)$ (the set of face centers of $G$), with one edge between $v$ and $f^*$ for each pinchcomponent of $\Pi$ that contains both $v$ and $f^*$.
Thus, $G$ can be thought of as a `fattened' version of $H$, with each edge replaced by a sphere.  Every bipartitioned graph $H$ with no isolated vertices corresponds to an embedding $G$.

Deleting a face or a vertex in $G$ then just corresponds to ordinary vertex deletion of the corresponding vertex of $H$, except that any isolated vertices created are discarded.  Contracting a face or vertex in $G$ corresponds to a \emph{vertex contraction} of the corresponding vertex $w$ of $H$, where we contract all edges of $H$ incident with $w$ into $w$ (keeping any parallel edges that are created).  We then discard $w$ if it is isolated.  So performing vertex and face deletions and contractions for edgeless pseudocellular embeddings is equivalent to performing operations on the class of bipartitioned graphs with no isolated vertices.

This suggests the investigation of vertex deletion and vertex contraction in the context of abstract graphs, which gives an apparently new containment relation for graphs with some interesting properties.  We modify the setting a little. 
We do not require graphs to be bipartite.  We allow graphs with isolated vertices, so we do not remove isolated vertices created by a vertex deletion or contraction.  But we restrict to simple graphs, so after performing a vertex contraction we remove loops, and reduce classes of parallel edges to single edges.
A \emph{vertex contraction-deletion minor}, or \emph{VCD minor}, of a simple graph $G$ is a simple graph $H$ obtained from $G$ by a sequence of such vertex deletions and vertex contractions.

The class of (simple) bipartite graphs is closed under taking VCD minors, and bipartite graphs are the $C_3$-VCD-minor-free graphs.
VCD minors have been investigated by the second author in collaboration with Sarah Allred and Gabri\"{e}lle Zwaneveld \cite{AE25+,AEZ26+}.
Other important classes of graphs, including chordal graphs and chordal bipartite graphs, can also be described by a finite set of forbidden VCD minors \cite{AE25+}.
VCD minors are related to \emph{t-minors}, introduced in \cite{GS98} and defined in \cite{BS10}, which preserve a property known as \emph{t-perfection}.  For t-minors we allow vertex deletion, but vertex contraction is only allowed for vertices whose neighbors are independent.  Thus, for bipartite graphs VCD minors are the same as t-minors.

\section{A family of polynomial invariants for pseudocellular embeddings}
\label{sec:poly}

In this section we discuss a family of polynomial invariants for pseudocellular embeddings.

One of us (J.E.-M.) and Moffatt \cite[\S5]{EM12} introduced a \emph{topological transition polynomial} for cellular embeddings of graphs in surfaces, which uses the three minor operations of (BR) edge deletion, edge contraction, and edge twist-contraction.  This was defined in a general way, using a weight for each individual edge and way that the edge could be reduced.  If we assume that all edge contractions have weight $\al$, all edge deletions have weight $\be$, and all edge twist-contractions have weight $\ga$, we obtain a recursive definition (where we abbreviate $Q(G, (\al, \be, \ga), t)$ to just $Q(G)$)
\begin{equation}\label{eq:transrecurs}
Q(G) = \begin{cases}
	t^{c(G)} &\text{if $G$ has no edges,} \\
	\al Q(G\ct e) + \be Q(G \dt e) + \ga Q(G \tc e)
		&\text{for an arbitrary edge $e$, otherwise,} \\
\end{cases}
\end{equation}
where $c(G)$ denotes the number of components of the embedded graph $G$.  This is equivalent to the following definition, where the sum is over all ordered partitions $(A, B, C)$ of $E(G)$.
\begin{equation}\label{eq:trans}
Q(G, (\al, \be, \ga), t) = \sum_{(A,B,C)} \al^{|A|}\, \be^{|B|}\, \ga^{|C|}\, t^{c(G \ct A \dt B \ms1\tc\ms2 C)} .
\end{equation}
The equivalence of the two formulas relies on the fact that the three minor operations commute, in the sense that we have $G \,o_1\, e_1 \,o_2\, e_2 = G \,o_2\, e_2 \,o_1\, e_1$ whenever $e_1, e_2$ are distinct edges and $o_1, o_2$ are (not necessarily distinct) operations.  Thus, the order in which we consider edges in the recursive formula (\ref{eq:transrecurs}) does not matter, and we can just consider the sets of edges on which we performed each of the operations, giving (\ref{eq:trans}).

\def\bc{_{\fam0 c}}
\def\bd{_{\fam0 d}}
\def\bsc{_{\fam0 sc}}
\def\bsd{_{\fam0 sd}}
\def\btc{_{\fam0 t}}
\def\ber{_{\fam0 e}}
\def\bsw{_{\fam0 w}}
\def\bv{_{\fam0 v}}
\def\bf{_{\fam0 f}}
\def\bk{_{\fam0 k}}
\def\bp{_{\fam0 p}}

For pseudocellular embeddings we have ten separate operations we can use to eliminate edges from an embedding, as discussed in Subsection \ref{ss:addop}.  By Lemma \ref{lem:opcomm}, they all commute in the same sense as the three minor operations discussed above.  Thus, we can unambiguously apply each of these operations to sets of edges, not just to individual edges.

We can use a recursive formula similar to (\ref{eq:transrecurs}) to define a \emph{transition-merger polynomial} $\Psi(G) = \Psi(G, (x\bc, x\bd, x\btc, x\bsc, x\bsd;\allowbreak y\bc, y\bd, y\btc, y\ber, y\bsw), \psi)$ by the recurrence relation
\begin{equation}\label{eq:tmrecurs}
\Psi(G) = \begin{cases}
	\psi(G) &\text{if $G$ has no edges, or} \\
	x\bc \Psi(G\ct e) + x\bd \Psi(G \dt e) + x\btc \Psi(G \tc e) + x\bsc \Psi(G \sct e) +
		x\bsd \Psi(G \sdt e) \mskip-300mu \\
	  &\text{for an arbitrary cellular edge $e$, or\qquad} \\
	y\bc \Psi(G\ct e) + y\bd \Psi(G \dt e) + y\btc \Psi(G \tc e) +
		y\ber \Psi(G \er e) + y\bsw \Psi(G \sw e)\mskip-350mu \\
	  &\text{for an arbitrary cross-pinched edge $e$,} \\
\end{cases}
\end{equation}
where $\psi$ is any invariant of edgeless pseudocellular embeddings, and all operations are quotient-based (Q) operations. 
While this formula defines the value for an individual embedding $G$, we need to work in the set of formal $\mR$-linear combinations of edge-labeled pseudocellular embeddings because we need that framework to define $G \sw e$.  For such a linear combination $\sum_i \al_i G_i$, we define $\Psi(\sum_i \al_i G_i)$ by linearity, as $\sum_i \al_i \Psi(G_i)$.  We also need to apply $\psi$ to $\mR$-linear combinations of edgeless embeddings, and again we extend $\psi$ to this setting by linearity.

Using the approach of Subsection \ref{ss:edgeless}, we may consider $\psi$ to be an invariant of bipartite graphs $H$ whose vertices are properly $2$-colored with either $\cv$ (indicating a vertex) or $\cf$ (indicating a face).  A simple choice might be $\psi(G) = t\bv^{v(G)}\, t\bf^{f(G)}\, t\bk^{k(G)}\, t\bp^{p(G)}$ where $v, f, k, p$ represent the numbers of vertices, faces, pseudosurface components, and pinchcomponents (or equivalently face plates) of the edgeless embedding $G$, which are, respectively, the numbers of vertices colored $\cv$, vertices colored $\cf$, components, and edges of the bipartite graph $H$.

As with (\ref{eq:transrecurs}), we can obtain a formula equivalent to (\ref{eq:tmrecurs}) as a single sum.  We sum over ordered pairs $(\mathcal{A}, \mathcal{B})$ where $\mathcal{A} = (A\bc, A\bd, A\btc, A\bsc, A\bsd)$ is an ordered partition of the cellular edges of $G$, and $\mathcal{B} = (B\bc, B\bd, B\btc, B\ber, B\bsw)$ is an ordered partition of the cross-pinched edges of $G$.  We have
\begin{equation}\begin{array}{rl}
\Psi(G, &\ms{-10}(x\bc, x\bd, x\btc, x\bsc, x\bsd; y\bc, y\bd, y\btc, y\ber, y\bsw), \psi) \\
&= \displaystyle \sum_{(\mathcal{A},\mathcal{B})} \big(\,
x\bc^{|A\bc|}\,
x\bd^{|A\bd|}\,
x\btc^{|A\btc|}\,
x\bsc^{|A\bsc|}\,
x\bsd^{|A\bsd|}\,
y\bc^{|B\bc|}\,
y\bd^{|B\bd|}\,
y\btc^{|B\btc|}\,
y\ber^{|B\ber|}\,
y\bsw^{|B\bsw|}\, \\
&\qquad\qquad\qquad
	\cdot\,\psi(G \ct A\bc \dt A\bd \tc A\btc \sct A\bsc \sdt A\bsd
	\ct B\bc \dt B\bd \tc B\btc \er B\ber \sw B\bsw) \,\big). \\
\end{array}\end{equation}

\begin{observation}
For cellular edges, $\ct$ and $\dt$ are dual operations, and $\sct$ and $\sdt$ are dual operations, while $\tc$ is self-dual.  For cross-pinched edges, $\ct$ and $\dt$ are dual operations, while $\tc$, $\er$, and $\sw$ are self-dual.  Therefore, we have
\begin{equation}\begin{array}{rl}
\Psi(G, &\ms{-10}(x\bc, x\bd, x\btc, x\bsc, x\bsd; y\bc, y\bd, y\btc, y\ber, y\bsw), \psi) \\[4pt]
&= \Psi(G\du, (x\bd, x\bc, x\btc, x\bsd, x\bsc; y\bd, y\bc, y\btc, y\ber, y\bsw), \psi\du) \\
\end{array}\end{equation}
where $\psi\du(F) = \psi(F\du)$ for edgeless pseudocellular embeddings $F$.
\end{observation}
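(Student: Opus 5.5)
The plan is to prove the duality pairings edge by edge, and then push them through the sum-over-partitions formula for $\Psi$.

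\textbf{Step 1: the duality identities for single edges.} For each of the ten elimination operations $o$ we establish $(G\,o\,e)\du = G\du\,\bar o\,e\du$, where $\bar o$ is the partner operation.

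For cellular edges the pairings are as follows.
\begin{itemize}
\item Deletion and contraction are paired: $\dt$ was defined by $G\dt e=(G\du\ct e\du)\du$, and the dual of the dual is the primal.
\item Supercontraction and superdeletion are paired: $G\sct e$ is $G\ct e$ followed by identifying the two face centres. Dually, $G\du\sdt e\du$ is $G\du\dt e\du$ followed by identifying the endvertices of $e\du$, which are those same face centres.
\item Twist-contraction is self-dual. The paper's description (a crosscap at the centre of $e$, then crushing both $e$ and $e^*$) is symmetric in $e$ and $e^*$.
\end{itemize}

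For cross-pinched edges the pairings are as follows.
\begin{itemize}
\item $\ct$ and $\dt$ are paired, as before.
\item $\tc$ is self-dual. It erases $e$ and identifies both the endvertices and the face centres, which is symmetric.
\item Erasure is self-dual, since $(G\er e)\du=G\du\er e\du$ was already noted.
\item Swirling is self-dual. This is best checked in the jewel: dualizing swaps $\cv$ and $\cf$ and swaps the v-gon and f-gon partitions. The swirl construction adds $\cz$-edges $u_1v_1,u_2v_2$ (or $u_1v_2,u_2v_1$) and then merges the v-gon and f-gon classes symmetrically. So the pair $\{G_1,G_2\}$ is sent to the pair of swirls of $G\du$, and the average $\frac12 G_1+\frac12 G_2$ is preserved.
\end{itemize}

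Duality also preserves the type of an edge (cellular or cross-pinched), so the partition of $E$ into these two kinds is the same for $G$ and $G\du$.

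\textbf{Step 2: iterate over sets of edges.} Lemma~\ref{lem:opcomm} lets us apply the operations one edge at a time in any order. We extend duality linearly to formal combinations, so that it commutes with the $\frac12$-averaging in swirling. Induction on the number of edges then gives
\[(G \ct A\bc \dt A\bd \cdots \sw B\bsw)\du = G\du \dt A\bc \ct A\bd \tc A\btc \sdt A\bsc \sct A\bsd \dt B\bc \ct B\bd \tc B\btc \er B\ber \sw B\bsw.\]
Here each edge is identified with its dual, as in the jewel representation.

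\textbf{Step 3: reindex the sum.} In the sum formula, apply $\psi=\psi\du\circ{}\du$ to each edgeless term, which is valid since $\psi(F)=\psi\du(F\du)$. Then reindex by sending each ordered partition to the partition with the parts $A\bc\leftrightarrow A\bd$, $A\bsc\leftrightarrow A\bsd$ and $B\bc\leftrightarrow B\bd$ swapped. This is a bijection on ordered partitions. It turns the monomial into the one with the variables $x\bc\leftrightarrow x\bd$, $x\bsc\leftrightarrow x\bsd$ and $y\bc\leftrightarrow y\bd$ exchanged, and the result is exactly the sum defining $\Psi(G\du,\dots,\psi\du)$.

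\textbf{Main obstacle.} I expect the only delicate point to be the self-duality of swirling, and in particular checking that the class mergers in the swirl construction are carried to each other under the $\cv\leftrightarrow\cf$ swap. The jewel description should settle this directly.
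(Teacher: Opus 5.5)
Your proposal is correct and follows essentially the same route as the paper. The paper treats the identity as immediate from the pairings (deletion and superdeletion are defined as the duals of contraction and supercontraction, and twist-contraction and erasure have descriptions symmetric in $e$ and $e^*$), combined with commutativity of the operations and a relabeling of the partition blocks; your jewel-level check that swirling is self-dual, and your remark that duality preserves whether an edge is cellular or cross-pinched, supply details the paper leaves implicit.
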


We can adapt $\Psi(G)$ for use in classes of embeddings where we require vertices and/or faces to be cellular by using BR modifications of the operations for eliminating edges.  As mentioned earlier, the general rule is that a BR operation is derived from a Q operation by dismantling any new pinchpoints formed by an operation that violate required cellularity conditions.  However, in this case some of the operations become equivalent.  For example, contraction and supercontraction become equivalent if we want to maintain face-cellularity, because the face center identification that occurs in supercontraction but not contraction is undone by the dismantling step.

In some cases we can use the original $\Psi(G)$, based on Q operations, to obtain results for situations where BR operations are used, as the following indicates.

\begin{observation}\label{obs:QPsi} (Due to ChatGPT Astra)
Although the transition polynomial $Q(G)$ for cellular surface embeddings is based on BR minor operations and $\Psi(G)$ is based on Q operations, it is possible to obtain $Q(G)$ as a specialization of $\Psi(G)$.  The pinchpoint dismantlings involved in BR minor operations can all be postponed until we have an edgeless embedding.
So suppose we perform Q minor operations on a cellular surface embedding $G$ to obtain an edgeless pseudocellular embedding $G'$, then dismantle all pinchpoints to obtain an edgeless cellular surface embedding $G''$.  We could also have obtained $G''$ directly from $G$ by applying the corresponding sequence of BR minor operations.
The pinchcomponents of $G'$ become the components of $G''$, so if we use the simple function $\psi\bp(G) = t^{p(G)}$, then $\psi\bp(G') = t^{p(G')} = t^{c(G'')}$.  Therefore, for a cellular surface embedding $G$, we have
\begin{equation}
Q(G, (\al,\be,\ga), t) = \Psi(G, (\al,\be,\ga,0,0;0,0,0,0,0),\psi\bp).
\end{equation}
\end{observation}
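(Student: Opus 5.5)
Since $\Psi$ with the specialization $(\al,\be,\ga,0,0;0,0,0,0,0)$ only ever applies $\ct$, $\dt$, $\tc$ to cellular edges, I would compare the expansion (\ref{eq:trans}) of $Q(G)$ term by term with the single-sum expansion of $\Psi$. Starting from a cellular surface embedding $G$, all edges are cellular and there are no cross-pinched edges. So the only surviving terms in the sum for $\Psi$ are those with $A\bsc=A\bsd=\emptyset$, indexed by ordered partitions $(A,B,C)=(A\bc,A\bd,A\btc)$ of $E(G)$. Each such term carries the weight $\al^{|A|}\be^{|B|}\ga^{|C|}$, as in (\ref{eq:trans}). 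The observation then reduces to showing, for each fixed $(A,B,C)$, that
\[ p\bigl(G \ct A \dt B \tc C\bigr)_{\mathrm Q} = c\bigl(G \ct A \dt B \tc C\bigr)_{\mathrm{BR}}, \]
where the left side uses Q operations and the right side uses BR operations.

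The key step is a postponement lemma. Let $G_{\mathrm Q}$ and $G_{\mathrm{BR}}$ be the results of the same sequence of operations in the Q and BR versions, respectively. I claim that dismantling every pinchpoint of $G_{\mathrm Q}$ yields $G_{\mathrm{BR}}$, up to homeomorphism. I would prove this in the jewel-with-partitions language of Subsection \ref{ss:pseudocellular} and Subsection \ref{ss:addop}, because dismantling all pinchpoints there just means replacing the v-gon and f-gon partitions by discrete partitions. Starting from a cellular surface embedding $G$, no cross-pinched edges ever arise, so there are no semiedge-digon identifications.

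In that language, each Q operation ($\ct$, $\dt$, $\tc$) does the same smoothing of $S_e$ (along $\cf$, $\cv$, $\cz$ respectively) as the corresponding BR operation. The only difference is that the Q version also merges partition classes. The underlying 4-edge-colored graph evolves identically in the two processes, and only the partitions differ. Dismantling, meaning forgetting the partitions, therefore commutes with the whole sequence.

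One point needs checking here. Each BR operation must really be the smoothing alone with discrete partitions. For BR $\ct$ and $\dt$ this is stated in Subsection \ref{ss:minorsurf}, and for BR $\tc=\ppe\ct$ it is stated in Subsection \ref{ss:addop}.

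It remains to compare counts on the edgeless result. In $G_{\mathrm Q}$ the pinchcomponents correspond to the components of the parent surface, that is, of the space obtained by dismantling all pinchpoints. By the lemma, that space is the surface of $G_{\mathrm{BR}}$. For an edgeless cellular surface embedding, each component is a sphere containing one vertex, so components of the embedded graph and surface components agree, and $c(G_{\mathrm{BR}})=p(G_{\mathrm Q})$.

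Summing over $(A,B,C)$ then gives the identity. Equivalently, one can induct on the recursions (\ref{eq:transrecurs}) and (\ref{eq:tmrecurs}), using Lemma \ref{lem:opcomm} and its BR analogue so that the order of edges does not matter.

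The main obstacle is the postponement lemma. Specifically, one must justify that a Q operation applied to an embedding with nondiscrete partitions performs exactly the same smoothing as the BR operation on the dismantled embedding. Put differently, the local action on $S_e$ must not depend on which v-gons or f-gons are identified. I would argue this from the locality in the proof of Lemma \ref{lem:opcomm}: each operation replaces $S_e$ by two $\ca$-edges and possibly adds $\cvv$/$\cff$ edges, and the added edges are discarded by dismantling.
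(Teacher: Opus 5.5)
Your proposal is correct and follows the same route as the paper: postpone all pinchpoint dismantlings to the edgeless stage, so that dismantling the Q-result $G'$ gives the BR-result $G''$, and then use that the pinchcomponents of $G'$ are exactly the components of $G''$, which gives $t^{p(G')}=t^{c(G'')}$ term by term. Your jewel-with-partitions argument makes explicit the postponement step that the paper only asserts: the Q and BR operations perform identical smoothings, the Q versions merely coarsen the v-gon and f-gon partitions, and no cross-pinched edges ever arise.
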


\section{Conclusion} \label{sec:conclusion}

\def\zz{  \vrule height12pt depth3pt width0pt}

\begin{table}[ht]
\begin{tabular}{|l|c|c|c|l|}
  \hline
  \zz Name & Notation & Defined & See also & Comments \\
  \hline
  \zz Edge contraction & $G \ct e$ & \S\ref{ss:pseudocellular} & \S\ref{ss:minorsurf}, \S\ref{ss:qbr}
        & dual to $G \dt e$ \\
  \zz Edge deletion & $G \dt e$ & \S\ref{ss:pseudocellular} & \S\ref{ss:minorsurf}, \S\ref{ss:qbr}
        & dual to $G \ct e$ \\
  \zz Edge twist-contraction & $G \tc e$ & \S\ref{ss:addop} & \S\ref{ss:minorsurf}
        & $G \ppe e \ct e$; self-dual \\
  \zz Edge supercontraction & $G \sct e$ & \S\ref{ss:addop} & & dual to $G \sdt e$ \\
  \zz Edge superdeletion & $G \sdt e$ & \S\ref{ss:addop} & & dual to $G \sct e$ \\
  \zz Edge erasure & $G \er e$ & \S\ref{ss:pseudocellular} & & cross-pinched $e$ only; self-dual \\
  \zz Edge swirl & $G \sw e$ & \S\ref{ss:addop} & & cross-pinched $e$ only; self-dual \\
  \hline
  \zz Edge pinch & $G \pn e$ & \S\ref{ss:pseudocellular} & & cellular $e$ only; dual to $G \dpn e$ \\
  \zz Edge copinch & $G \dpn e$ & \S\ref{ss:pseudocellular} & & cellular $e$ only; dual to $G \pn e$ \\
  \hline
  \zz Edge twist & $G \ppe e$ & \S\ref{ss:twdualpsemb} & \S\ref{ss:twdualsurf}, \S\ref{ss:addop} & dual to $G \pwi e$ \\
  \zz Edge cotwist & $G \pwi e$ & \S\ref{ss:twdualpsemb} & \S\ref{ss:twdualsurf} & dual to $G \ppe e$ \\
  \zz Edge partial dual & $G \pdu e$ & \S\ref{ss:twdualpsemb} & \S\ref{ss:twdualsurf} & restricted $G$; self-dual \\
  \hline
  \zz Face contraction & $G \ct f$ & \S\ref{ss:edgeless} & & dual to $G \ct v$ \\
  \zz Face deletion & $G \dt f$ & \S\ref{ss:edgeless} & & dual to $G \dt v$ \\
  \hline
  \zz Vertex contraction & $G \ct v$ & \S\ref{ss:edgeless} & & dual to $G \ct f$ \\
  \zz Vertex deletion & $G \dt v$ & \S\ref{ss:edgeless} & \S\ref{ss:minorsurf}
        & dual to $G \dt f$ \\
  \hline
\end{tabular}
\bigskip
\caption{Summary of operations on pseudocellular embeddings}\label{tab:opsumm}
\end{table}

For the reader's convenience, a table summarizing all pseudocellular embedding operations discussed in this paper appears in Table \ref{tab:opsumm}.
There are many further directions in which pseudocellular embeddings could be investigated.  We mention some relevant questions below.

\begin{enumerate}[(1),parsep=2pt]
\item In Subsection \ref{ss:twdualpsemb} we discuss one particular situation where we have twisted duality for pseudocellular embeddings, namely when we have vertex-cellular face-cellular embeddings.  This is based on BR edge twist (partial Petrie dual) and BR edge cotwist (partial Wilson dual) operations, where we preserve vertex- and face-cellularity. Can we find a subclass of pseudocellular embeddings for which the Q edge twist and Q edge cotwist operations give a twisted duality framework?

\item We defined pinching and copinching operations, and there are directions in which these can be investigated further.  How should we define `unpinching' operations that turn a cross-pinched edge into a cellular edge?
We discussed turning quasicellular embeddings into extended cogs by pinching all edges, but what happens if we use a combination of pinching and copinching to make all edges cross-pinched?

\item Embedded medial graphs play an important role in the theory of graphs in surfaces, and have been defined in some more general situations.  For example, medial graphs of edge-point ribbon graphs are defined in \cite{EKM18}.  How should embedded medial graphs be constructed for pseudocellular embeddings, and how do their properties relate to properties of the embedding?

\item Edmonds \cite{Edm65} gave a theorem that characterizes when two abstract graphs can be the underlying graphs of two cellular embeddings in surfaces that are dual to each other.  Can we extend this to quasicellular embeddings and to pseudocellular embeddings?  For pseudocellular embeddings, we may want to specify which edges should be cellular and which should be cross-pinched.

\item As mentioned in Subsection \ref{ss:prior}, Moffatt and Thompson \cite{MT24,Thom26} defined a Tutte polynomial with many nice properties for packaged ribbon graphs, which are equivalent to quasicellular embeddings with nonnegative integer weights for vertices and faces.  Can this be extended to weighted pseudocellular embeddings in a natural way?

\item A number of polynomials have been defined for objects that we have shown can be represented as pseudocellular embeddings, such as edge-point ribbon graphs \cite{EKM18} and cogs \cite{BEEMM25plus}.  Can these polynomials be expressed in natural way in terms of pseudocellular embeddings, and perhaps extended to larger classes of pseudocellular embeddings?

\item What can we say about embeddings of hypergraphs in pseudosurfaces?  It may be helpful to use the representation of hypergraph embeddings in surfaces as properly $3$-edge-colored graphs, similar to gems, which is due to Chmutov and Vignes-Tourneret \cite{CV22}.
\end{enumerate}

\section*{Acknowledgments}

M.E. thanks the Simons Foundation for support under awards 429625 and MPS-TSM-00002760.
M.E. also thanks Spencer Dowdall for topological assistance, and Chun-Hung Liu and Youngho Yoo for pointing out the connection between VCD minors and t-minors.

M.E. and J.E.-M. thank Iain Moffatt for pointing out the connection to edge-point ribbon graphs, and for many discussions on the possibility of partial duality for pseudosurface embeddings, which pushed us to think harder about these embeddings.
These discussions occurred at the \emph{Workshop on Uniqueness and Discernment in Graph Polynomials} held at the MATRIX Institute for the Mathematical Sciences in October 2023.
M.E. and J.E.-M. are very grateful for MATRIX--Simons Travel Grants which partially supported their participation in this workshop, and for the productive research environment provided by MATRIX.
We also thank our coauthors on \cite{BEEMM25plus}, namely Iain Moffatt, Paul Bratch, and Wout Moltmaker, as some of the ideas in this paper build on that one.

AI systems including various levels of Claude and ChatGPT were used to help create figures based on hand-drawn originals, and to proofread this paper during preparation.  The AI systems provided useful suggestions and corrections, including one new observation (Observation \ref{obs:QPsi}).  All writing was done by the authors and the authors take responsibility for all statements in this paper.

\bibliography{e}
\bibliographystyle{amsplain}

\end{document}